\newcommand{\varrg}{(M, g)}
\newcommand{\erre}{\mathds{R}}
\newcommand{\cinf}{C^{\infty}(M)}
\newcommand{\ricc}{\operatorname{Ric}}
\newcommand{\diver}{\operatorname{div}}
\newcommand{\hess}{\operatorname{Hess}}
\newcommand{\ra}{\rightarrow}
\newcommand{\set}[1]{{\left\{#1\right\}}}               
\newcommand{\pa}[1]{{\left(#1\right)}}                  
\newcommand{\sq}[1]{{\left[#1\right]}}                  
\newcommand{\abs}[1]{{\left|#1\right|}}                 
\newcommand{\riemanng}[1]{\pa{#1,g}}                      
\renewcommand{\hat}[1]{\widehat{#1}}
\renewcommand{\tilde}[1]{\widetilde{#1}}
\newtheorem{theorem}{\textbf{Theorem}}[section]
\newtheorem{lemma}[theorem]{\textbf{Lemma}}
\newtheorem{proposition}[theorem]{\textbf{Proposition}}
\newtheorem{cor}[theorem]{\textbf{Corollary}}
\newtheorem{defi}[theorem]{\textbf{Definition}}
\theoremstyle{remark}
\newtheorem{rem}[theorem]{\textbf{Remark}}
\numberwithin{equation}{section}
\title[Einstein-type manifolds]
{On the geometry of gradient Einstein-type manifolds}
\date{\today} \linespread{1.2}
\keywords{}
\subjclass[2010]{53C20; 53C25, 53A55}
\begin{document}

\maketitle

\begin{center}
\textsc{\textmd{Giovanni Catino\footnote{Politecnico di Milano,
Italy. Email: giovanni.catino@polimi.it. Partially supported by GNAMPA, section ``Calcolo delle Variazioni, Teoria del Controllo e Ottimizzazione'', and GNAMPA, project ``Equazioni differenziali con invarianze in Analisi Globale''.}, Paolo
Mastrolia\footnote{Universit\`{a} degli Studi di Milano, Italy.
Email: paolo.mastrolia@gmail.com. Partially supported by FSE,
Regione Lombardia.}, Dario D. Monticelli\footnote{Universit\`{a}
degli Studi di Milano, Italy. Email: dario.monticelli@gmail.com.
Partially supported by GNAMPA, section ``Equazioni differenziali e
sistemi dinamici'', and GNAMPA, project ``Equazioni differenziali con invarianze in Analisi Globale''.} and Marco
Rigoli\footnote{Universit\`{a} degli Studi di Milano, Italy. Email:
marco.rigoli@unimi.it.}, }}
\end{center}
\begin{abstract}
In this paper we introduce the notion of Einstein-type structure on a Riemannian manifold $\varrg$, unifying various particular cases recently studied in the literature, such as gradient Ricci solitons,  Yamabe solitons and quasi-Einstein manifolds. We show that these general structures can be locally classified when the Bach tensor is null. In particular, we extend a recent result of Cao and Chen \cite{CaoChen}.
\end{abstract}


\section{Introduction and main results}

In the last years there has been an increasing interest in the
study of  Riemannian manifolds endowed with metrics
satisfying some structural equations, possibly involving curvature
and some  globally defined vector fields. These objects naturally
arise in several different frameworks; two of the most important and
well studied examples are \emph{Einstein manifolds} (see e.g.
\cite{Jensen}, \cite{Besse}, \cite{WangZiller1}, \cite{WangZiller2}, \cite{MasMonRig_Curvature}) and \emph{Ricci solitons} (see e.g.
\cite{hamilton}, \cite{Ivey}, \cite{perelman1}, \cite{ELnM},
\cite{NiWallach},  \cite{ZHZhang}, \cite{Naber}, \cite{petwylie3},
\cite{CaoZhou}, \cite{HDCaoChen_Steady}, \cite{CatMantEv},
\cite{PRiS}, \cite{XDCaoWangZhang_ShrinkingRS},
\cite{Catino_pinched}, \cite{CaoChen}, \cite{CaoCatinoChenMantMazz},
\cite{MasRigRim}, \cite{brendle} and references therein). Other
examples are, for instance, \emph{Ricci almost solitons}
(\cite{PRRS_Almost}), \emph{Yamabe solitons}
(\cite{DiCerboDisconzi}, \cite{MaCheng},\cite{DaskaSesum},
\cite{HDCaoSunZhang}), \emph{Yamabe quasi-solitons}
(\cite{HuangLi_quasiYamabe}, \cite{Wang_quasiYamabe}),
\emph{conformal gradient solitons} (\cite{tashiro},
\cite{catmantmazz}), \emph{quasi-Einstein manifolds} (\cite{kimkim},
\cite{CaseShuWei}, \cite{CatMantMazzRim_quasiEinstein},
\cite{HePetersenWylie}, \cite{MastroliaRimoldi_triviality}),
\emph{$\rho$-Einstein solitons} (\cite{CatinoMazz_rhoEinstein1},
\cite{CatinoMazzMongodi_rhoEinstein2}).


%
%

In this paper we study Riemannian manifolds satisfying a general
structural condition that includes all the aforementioned examples as
particular cases.

Towards this aim we consider a smooth, connected Riemannian manifold $\varrg$ of
dimension $m\geq 3$, and we denote with $\ricc$ and $S$ the
corresponding \emph{Ricci tensor} and \emph{scalar curvature},
respectively (see the next section for the details). We
denote with $\hess(f)$ the Hessian of a function $f\in \cinf$ and
with $\mathcal{L}_Xg$ the Lie derivative of the metric $g$ in the
direction of the vector field $X$. We introduce the following

\begin{defi}\label{DE_GETM}
We say that $\varrg$ is an \emph{Einstein-type manifold} (or, equivalently, that $\varrg$ supports an \emph{Einstein-type structure}) if there
exist $X\in\mathfrak{X}(M)$ and $\lambda \in \cinf$ such that
\begin{equation}\label{Eq_ETS_generic_global}
  \alpha\ricc +\frac{\beta}{2}\mathcal{L}_Xg+\mu X^\flat\otimes X^\flat = \pa{\rho S+\lambda}g,
\end{equation}
for some constants $\alpha, \beta, \mu, \rho \in \erre$, with $(\alpha, \beta, \mu)\neq (0,0,0)$.
If $X=\nabla f$ for some $f\in\cinf$, we say that $\varrg$ is a
\emph{gradient Einstein-type manifold}. Accordingly equation
\eqref{Eq_ETS_generic_global} becomes
\begin{equation}\label{Eq_ETS_global}
  \alpha\ricc +\beta\hess(f)+\mu df\otimes df = \pa{\rho S+\lambda}g,
\end{equation}
for some $\alpha, \beta, \mu, \rho \in \erre$.
\end{defi}
\noindent Here $X^\flat$ denotes the $1$-form metrically dual to $X$.

%

In the present paper we focus our analysis on the gradient case,
postponing the general case to a subsequent work.

Leaving aside the case $\beta=0$ that will be addressed separately, see Proposition \ref{PR_betanullo} below, we say that the gradient Einstein-type manifold $\varrg$ is  \emph{nondegenerate} if $\beta \neq 0$ and
$\beta^2\neq (m-2)\alpha\mu$; otherwise, that is if $\beta \neq 0$ and
$\beta^2= (m-2)\alpha\mu$ we have a \emph{degenerate} gradient Einstein-type manifold. Note that, in this last case, necessarily $\alpha$ and $\mu$ are not null. The above terminology is justified by the next observation:
\begin{equation}\label{121}
\begin{array}{c}
  \varrg \, \text{is conformally Einstein if and only if} \\ \text{for some } \alpha, \beta, \mu \neq 0, \,  \varrg \text{ is a degenerate, gradient Einstein-type manifold.}
  \end{array}
\end{equation}
For the proof and for the notion of conformally Einstein manifold see Section \ref{sec_2} below.

In case $f$ is constant we say that the Einstein-type structure is \emph{trivial}. Note that, since $m\geq 3$, in this case $\varrg$ is Einstein. However, the converse is generally false; indeed, if $\varrg$ is Einstein, then for some constant $\Lambda \in \erre$ we have $\ricc = \Lambda g$ and inserting into \eqref{Eq_ETS_global} we obtain
\[
\beta \hess(f) +\mu df\otimes df = \pa{\rho S + \lambda-\Lambda\alpha}g.
\]
Thus, if $\rho \neq 0$, $\varrg$ is a Yamabe quasi-soliton and $f$ is not necessarily constant.

\noindent We will also deal with the case $\alpha=0$ separately, see Theorem \ref{TH_alfa0_quasiYamabe} below. We explicitly remark that, as a simple consequence of \eqref{Eq_ETS_global}, $\alpha$ and $\beta$ cannot both be equal to zero.

As we have already noted, the class of manifolds satisfying Definition
\ref{DE_GETM} gives rise to the  previously quoted examples
by specifying, in general not in a unique way, the values of the
parameters and possibly the function $\lambda$. In particular we
have:
\begin{enumerate}
  \item Einstein manifolds: $\pa{\alpha, \beta, \mu,  \rho} = \pa{1, 0, 0, \frac{1}{m}}, \lambda=0$ (or, equivalently for $m\geq 3$, $\rho=0$ and $\lambda = \frac{S}{m}$);
  \item Ricci solitons: $\pa{\alpha, \beta, \mu,  \rho} = \pa{1, 1, 0, 0}, \lambda \in \erre$;
    \item Ricci almost solitons: $\pa{\alpha, \beta, \mu,  \rho} = \pa{1, 1, 0, 0}, \lambda \in \cinf$;
    \item Yamabe solitons: $\pa{\alpha, \beta, \mu,  \rho} = \pa{0, 1, 0, 1}, \lambda\in \erre$;
       \item Yamabe quasi-solitons: $\pa{\alpha, \beta, \mu,  \rho} = \pa{0, 1, -\frac{1}{k}, 1}, k\in \erre\setminus\set{0}, \lambda \in \erre$;
           \item conformal gradient solitons: $\pa{\alpha, \beta, \mu,  \rho} = \pa{0, 1, 0, 0}, \lambda \in \cinf$;
               \item quasi-Einstein manifolds:  $\pa{\alpha, \beta, \mu,  \rho} = \pa{1, 1, -\frac{1}{k}, 0},\, \lambda \in \erre$, $k \neq 0$;
                \item $\rho$-Einstein solitons:  $\pa{\alpha, \beta, \mu,  \rho} = \pa{1, 1, 0, \rho}, \,\,\rho\neq 0,  \lambda \in \erre$.
         \end{enumerate}

Of course one may wonder about the existence of Einstein-type structures. We know from the literature positive answers to the various examples the we mentioned earlier. For the general case we will consider three different necessary conditions; the first two are the general integrability conditions \eqref{firstGeneralIntCond} and \eqref{secondGeneralIntCond} contained in Theorem \ref{TH_integrabilityConditions} below. The third comes from the simple observation that, tracing equation \eqref{Eq_ETS_global} and defining $u=e^{\frac{\mu}{\beta}f}$, the existence of a gradient Einstein-type structure on $\varrg$ yields the existence of a positive solution of
\[
L u = \Delta u -\frac{\mu}{\beta}\sq{m\lambda+\pa{m\rho-\alpha}S}u = 0,
\]
so that, by a well-known spectral result (see for instance Fischer-Colbrie-Schoen \cite{FCS}, or Moss-Piepenbrink \cite{MPiepen}), the operator $L$ is stable, or, in other words, the spectral radius of $L$, $\lambda^L_1(M)$, is nonnegative.
In Section \ref{sec_2.5} below we shall give some simple conditions on the function $\frac{\mu}{\beta}\sq{m\lambda+\pa{m\rho-\alpha}S}$ that prevent this possibility, so that the corresponding Einstein-type structure cannot exist.

As it appears from Definition \ref{DE_GETM}, the fact that $\varrg$ is an Einstein-type manifold can be interpreted as a prescribed condition on the Ricci tensor of $g$ (see for instance the nice survey \cite{jpb}), that is, on the ``trace part'' of the Riemann tensor. Thus, it is reasonable to expect classification and rigidity results for these structures only assuming further conditions on the traceless part of the Riemann tensor, i.e. on the Weyl tensor. Indeed, most of the aforementioned papers pursue this direction, for instance, assuming that $\varrg$ is locally conformally flat or has harmonic Weyl tensor. In the spirit of the recent work of H.-D. Cao and Q. Chen \cite{CaoChen}, we study the class of gradient Einstein-type manifolds with vanishing Bach tensor along the integral curves of $f$. We note that this condition is weaker than local conformal flatness (see Section \ref{sec_2}).

It turns out that, as in the case of gradient Ricci solitons (see \cite{HDCaoChen_Steady}, \cite{CaoChen} and \cite{CaoCatinoChenMantMazz}), the leading actor is a three tensor, $D$,  that plays a fundamental role in relating the Einstein-type structure to the geometry of the underlying manifold. $D$ naturally appears when writing the first two integrability conditions for the structure defining the differential system \eqref{Eq_ETS_global}. Quite unexpectedly, the constant $\rho$ and the function $\lambda$ have no influence on this relation.

Our main purpose is to give  local characterizations of complete, noncompact, nondegenerate gradient Einstein-type manifolds. Denoting with $B$ the Bach tensor of $\varrg$ (see Section \ref{sec_2}), our first result is

\begin{theorem}\label{TH_Main}
  Let $\varrg$ be a complete, noncompact, nondegenerate gradient Einstein-type manifold of dimension $m\geq 3$.
  If $B\pa{\nabla f, \cdot}=0$ and $f$ is a proper function, then, in a neighbourhood of every regular level set of $f$, the manifold $\varrg$ is locally a warped product with $(m-1)$-dimensional Einstein fibers.
\end{theorem}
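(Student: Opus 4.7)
The plan is to adapt the strategy developed by Cao--Chen for Bach-flat gradient Ricci solitons to the more general Einstein-type setting, in three stages: (i) use the two integrability conditions for \eqref{Eq_ETS_global}, together with $B(\nabla f,\cdot)=0$, to force the vanishing of a single three-tensor $D$ contracted with $\nabla f$; (ii) translate this vanishing into the statements that $|\nabla f|$ is constant on regular level sets of $f$, that $\nabla f$ is a Ricci eigenvector, and that the level sets are totally umbilical; (iii) integrate the flow of $\nabla f/|\nabla f|^2$ on a tubular neighbourhood and conclude the warped product structure with Einstein fibers.

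First I would derive the integrability conditions alluded to in Theorem \ref{TH_integrabilityConditions} by covariantly differentiating \eqref{Eq_ETS_global}, antisymmetrizing, and using the second Bianchi identity and the trace of the structural equation. This naturally produces the three-tensor $D$ (built from the Cotton tensor, $\ricc$, $df$ and $W$), in which the parameters $\rho$ and $\lambda$ drop out, as announced in the discussion preceding the theorem. A careful computation relates $B(\nabla f,\cdot)$ to a specific linear combination of contractions of $D$ and $W$ with $\nabla f$, schematically of the form
\begin{equation*}
c_1\,B_{ij}f^j \;=\; c_2\,W_{ikjh}f^k f^h \;+\; \bigl[\beta^2-(m-2)\alpha\mu\bigr]\,D_{ijk}f^k \;+\; \text{lower order terms},
\end{equation*}
the crucial point being that the coefficient multiplying the $D$-contraction is exactly the nondegeneracy quantity. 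Since $B(\nabla f,\cdot)=0$ and $\beta^2\neq(m-2)\alpha\mu$, one concludes $D(\nabla f,\cdot,\cdot)\equiv 0$.

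Second, I would extract geometric information from $D(\nabla f,\cdot,\cdot)=0$. Combining this identity with Ricci commutation on $\nabla f$ yields that $\nabla f$ is a Ricci eigenvector, say $\ricc(\nabla f)=\eta\,\nabla f$, and that $W(X,Y,Z,\nabla f)=0$ for vectors tangent to a regular level set $\Sigma_c=f^{-1}(c)$. From the structural equation the tangential derivative of $|\nabla f|^2$ is controlled by $\hess(f)(\nabla f,\cdot)$, which vanishes on $T\Sigma_c$ by the eigenvector property; hence $|\nabla f|$ is constant on each connected component of $\Sigma_c$. Restricting \eqref{Eq_ETS_global} to $T\Sigma_c$ and using the eigenvector relation then shows that $\hess(f)|_{T\Sigma_c}$ is a pointwise scalar multiple of the induced metric, so $\Sigma_c$ is totally umbilical with mean curvature depending only on $f$.

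Third, properness of $f$ makes each regular level set compact and allows one to pick an interval $(a,b)$ of regular values. The flow of $\nabla f/|\nabla f|^2$ identifies the corresponding neighbourhood with $(a,b)\times\Sigma$, and the previous estimates (pure-trace second fundamental form, $|\nabla f|$ depending only on $f$) give, after reparametrizing, a metric of the form $g=dt^2+\varphi(t)^2 g_\Sigma$. That $g_\Sigma$ is Einstein follows from the Gauss equation applied to $\Sigma_c$: the Weyl term $W(X,\nabla f,Y,\nabla f)|_{T\Sigma}$ vanishes by step (ii), the second fundamental form contributes a pure multiple of $g_\Sigma$, and the tangential ambient Ricci is itself a multiple of $g_\Sigma$ by \eqref{Eq_ETS_global} combined with the Ricci eigenvector property. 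I expect the main obstacle to be step (i): writing the integrability conditions in a form uniform across all the particular structures listed after Definition \ref{DE_GETM}, and arranging the computation so that the coefficient $\beta^2-(m-2)\alpha\mu$ appears precisely where needed, thereby justifying both the terminology and the hypothesis of nondegeneracy. Once $D(\nabla f,\cdot,\cdot)\equiv 0$ is secured, steps (ii) and (iii) are standard manipulations in the theory of warped products.
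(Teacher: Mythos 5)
Your steps (ii) and (iii) track the paper's Theorem \ref{TH_PropertiesOnSigmac} and the final assembly reasonably well, but step (i) contains a genuine gap. You claim a pointwise algebraic identity of the form $c_1 B_{ij}f^j = c_2 W_{ikjh}f^kf^h + [\beta^2-(m-2)\alpha\mu]D_{ijk}f^k + \dots$, from which $B(\nabla f,\cdot)=0$ and nondegeneracy would force $D(\nabla f,\cdot,\cdot)\equiv 0$. No such identity holds: the Bach tensor enters only through the divergence of the Cotton tensor, so the second integrability condition \eqref{secondGeneralIntCond} relates $\alpha B_{ij}$ to the \emph{divergence} $D_{ijk,k}$ (plus $f_tC_{jit}$ and $f_tf_kW_{itjk}$ terms), not to the contraction $D_{ijk}f^k$. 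One cannot pointwise isolate $D(\nabla f,\cdot,\cdot)$ from $B(\nabla f,\cdot)$; the "lower order terms" in your schematic formula would necessarily contain first derivatives of $D$, and you supply no mechanism (maximum principle, ODE along the flow, etc.) to dispose of them.

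The paper's actual route is integral, and this is precisely where properness does its real work — not in step (iii) as you use it. Contracting the identity with $f_if_j$ yields (Lemma \ref{LMasd})
\begin{equation*}
\pa{\tfrac{m-2}{2}}\sq{\beta-\tfrac{(m-2)\alpha\mu}{\beta}}\abs{D}^2 = -\beta(m-2)f_if_jB_{ij}+\tfrac{\beta}{\alpha}\sq{\beta-\tfrac{(m-2)\alpha\mu}{\beta}}\pa{f_if_jD_{ijk}}_k ,
\end{equation*}
so that under $B(\nabla f,\cdot)=0$ a positive multiple of $\abs{D}^2$ equals $\diver Y$ with $Y_k=\frac{\beta}{\alpha}f_if_jD_{ijk}$, a field orthogonal to $\nabla f$ by the skew-symmetry of $D$. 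Integrating over the sublevel sets $\set{f\leq c}$ — compact because $f$ is proper — the divergence theorem produces a boundary integral $\int_{\Sigma_c}g(Y,\nu)$ that vanishes since $\nu\parallel\nabla f\perp Y$; letting $c\to+\infty$ gives $D\equiv 0$ on all of $M$. Note also that the full vanishing of $D$ is what is needed downstream: the identity \eqref{EQ_d2mad} expressing $\abs{D}^2$ in terms of $\abs{\Phi}^2$ and $R_{am}R_{am}$ delivers total umbilicity and the Ricci-eigenvector property simultaneously, whereas the single contraction $f_iD_{ijk}=0$ on the first slot would only give the eigenvector statement. As written, your argument never establishes $D\equiv 0$, so the geometric conclusions of steps (ii)--(iii) are not yet available.
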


In dimension four we  improve this result, obtaining

\begin{cor} \label{COR_Main}
  Let $(M^{4},g)$ be a complete, noncompact nondegenerate gradient Einstein-type manifold of dimension four.
 If $B\pa{\nabla f, \cdot}=0$ and $f$ is a proper function, then, in a neighbourhood of every  regular level set of $f$, the manifold $\varrg$ is locally a warped product with three-dimensional fibers of constant curvature. In particular, $(M^{4}, g)$ is locally conformally flat.
\end{cor}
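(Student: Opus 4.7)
The plan is to obtain the corollary as a quick consequence of Theorem \ref{TH_Main}, exploiting two well-known low-dimensional rigidity facts.

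First, I would apply Theorem \ref{TH_Main} with $m=4$: under the stated hypotheses, in a neighbourhood of every regular level set of $f$ the manifold is locally isometric to a warped product of the form
\[
\bigl(I\times N^{3},\, dt^{2}+h(t)^{2}\, g_{N}\bigr),
\]
for some positive warping function $h$ on an interval $I\subset\erre$ and some three-dimensional Einstein manifold $(N^{3},g_{N})$.

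Second, I would invoke the classical fact that every three-dimensional Einstein manifold has constant sectional curvature. This is because in dimension three the Weyl tensor vanishes identically, so the full Riemann tensor is determined algebraically by the Ricci tensor and the scalar curvature; writing $\ricc_{g_{N}}=\Lambda g_{N}$ for a constant $\Lambda\in\erre$ then forces all sectional curvatures of $g_{N}$ to coincide. This immediately upgrades the fibers from Einstein to constant sectional curvature, yielding the first statement of the corollary.

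Third, for the local conformal flatness of $\varrg$, I would introduce the change of variable $ds=dt/h(t)$ and rewrite the metric as
\[
dt^{2}+h(t)^{2}\, g_{N}=h(t(s))^{2}\bigl(ds^{2}+g_{N}\bigr),
\]
so that $\varrg$ is locally conformal to the Riemannian product $\erre\times N^{3}$. For $N^{3}$ of constant sectional curvature this product is itself locally conformally flat: in each of the three cases of positive, zero, or negative curvature one exhibits an explicit radial conformal diffeomorphism onto an open subset of the standard $\erren$ (for instance $(s,p)\mapsto e^{\sqrt{\kappa}\, s}p$ when $\kappa>0$), while the flat case is trivial. Since in dimension $m=4$ local conformal flatness is equivalent to the vanishing of the Weyl tensor, and since this is a conformal invariant, the conclusion follows.

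I do not foresee any substantial obstacle: both the dimension-three Einstein rigidity and the conformal flatness of $\erre\times N^{3}$ with constant-curvature fiber are classical; the only care required is the bookkeeping of the conformal factor arising from the warping, which is routine. The entire argument is essentially a specialization of Theorem \ref{TH_Main} to $m=4$, using that in this dimension the Einstein condition on the $(m-1)$-dimensional fiber is strong enough to force constant curvature, and hence conformal flatness of the ambient warped product.
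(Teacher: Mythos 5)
Your argument is correct, but it follows a genuinely different route from the paper's. The paper deduces the corollary from Corollary \ref{COR_vanishingOfW}: since $D=0$ (Theorem \ref{TH_BZDZ}) and hence $C=0$ at regular points (Proposition \ref{PR_vanishingOfC}), the first integrability condition \eqref{firstGeneralIntCond} gives $f_tW_{tijk}=0$, and then a purely algebraic fact about four-dimensional tensors with the Weyl symmetries (a trace-free Weyl-type tensor with $W_{4ijk}=0$ in an adapted frame vanishes identically) yields $W=0$ wherever $\nabla f\neq0$; local conformal flatness follows, and the constant curvature of the fibers is then read off from the warped-product structure. You instead take the output of Theorem \ref{TH_Main} at face value and upgrade it by two classical facts: a three-dimensional Einstein manifold has constant sectional curvature (Weyl vanishes identically in dimension three and the Riemann tensor is algebraically determined by $\ricc$), and a warped product $dt^2+h(t)^2g_N$ over an interval with constant-curvature fiber is conformal to the product $\erre\times N^3$, which is locally conformally flat. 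Both routes are sound. The paper's argument buys slightly more — it establishes $W=0$ at \emph{every} point where $\nabla f\neq 0$, independently of the warped-product description, and is the natural continuation of the tensorial machinery already in place — while yours is more elementary and self-contained, avoiding the four-dimensional algebraic lemma at the cost of invoking the classical local-conformal-flatness criterion for products with a one-dimensional factor. The only point worth making explicit in your write-up is that the Einstein constant of the fiber is genuinely constant (which holds by Schur's lemma since the fiber has dimension $3\geq 3$, and is in any case supplied by Theorem \ref{TH_PropertiesOnSigmac}), so that the dimension-three rigidity applies.
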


As we will show in Section \ref{sec_8}, the properness assumption is satisfied by some important subclasses of Einstein-type manifolds, under quite natural geometric assumptions. As a consequence, in the case of gradient Ricci solitons, we recover a local version of the results in \cite{CaoChen} and \cite{CaoCatinoChenMantMazz}, while, in the cases of $\rho$-Einstein solitons and Ricci almost solitons, we prove two new classification theorems (see Theorem \ref{TH_appl1} and \ref{TH_appl2}).

In the special case $\alpha =0$ (which includes Yamabe solitons,
Yamabe quasi-solitons and conformal gradient solitons) we give a version of Theorem \ref{TH_Main} in the following local result that provides a very precise description of the metric in this situation. Note that Theorem \ref{TH_alfa0_quasiYamabe} and Corollary \ref{COR_alfa0_quasiYamabe} also apply to the compact case.

\begin{theorem}\label{TH_alfa0_quasiYamabe} Let $\varrg$ be a complete gradient Einstein-type manifold of dimension $m\geq 3$ with $\alpha=0$. Then, in a neighbourhood of every regular level set of $f$, the manifold $\varrg$ is locally a warped product with $(m-1)$-dimensional fibers. More precisely, every regular level set $\Sigma$ of $f$ admits a maximal open neighborhood $U\subset M^m$ on which $f$ only depends on the signed distance $r$ to the hypersurface $\Sigma$. In addition, the potential function $f$ can be chosen in such a way that the metric $g$ takes the form
\begin{equation}\label{warped metric}
g \, = \, dr \otimes dr \,+ \,\left(\frac{f'(r)}{f'(0)}e^{\mu f(r)}\right)^{2}\, g^{\Sigma} \quad {\hbox{on $U$}} ,
\end{equation}
where $g^{\Sigma}$ is the metric induced by $g$ on $\Sigma$. As a consequence, $f$ has at most two critical points on $M^m$ and we have the following cases:
\begin{itemize}
\item[(1)] If $f$ has no critical points, then $\varrg$ is globally conformally equivalent to a direct product $I\times N^{m-1}$ of some interval $I=(t_{*},t^{*})\subseteq \mathbb{R}$ with a $(m-1)$-dimensional complete Riemannian manifold $(N^{m-1},g^{N})$. More precisely, the metric takes the form
$$
g \, = \, u^{2}(t)\, \big(dt^{2}+g^{N}\big) \, ,
$$
where $u:(t_{*},t^{*})\rightarrow \mathbb{R}$ is some positive smooth function.
\smallskip

\item[(2)] If $f$ has only one critical point $O\in M^m$, then $\varrg$ is globally conformally equivalent to the interior of a Euclidean ball of radius $t^{*}\in(0,+\infty]$. More precisely, on $M^{m}\setminus~\{O\}$, the metric takes the form
$$
g \, = \, v^{2}(t)\, \big(dt^{2}+t^{2}g^{\mathbb{S}^{m-1}}\big) \,,
$$
where $v:(0,t^{*})\rightarrow \mathbb{R}$ is some positive smooth function and $\mathbb{S}^{m-1}$ denotes the standard unit sphere of dimension $m-1$. In particular $\varrg$ is complete, noncompact and rotationally symmetric.
\smallskip
%
%
\item[(3)] If the function $f$ has two critical points $N,S \in M^m$, then $\varrg$ is globally conformally equivalent to $\mathbb{S}^{m}$.  More precisely, on $M^{m}\setminus \{N,S\}$, the metric takes the form
$$
g \, = \, w^{2}(t)\, \big(dt^{2}+\sin^{2}(t)\,g^{\mathbb{S}^{m-1}}\big) \,,
$$
where $w:(0,\pi)\rightarrow \mathbb{R}$ is some smooth positive function. In particular $\varrg$ is compact and rotationally symmetric.
\end{itemize}
\end{theorem}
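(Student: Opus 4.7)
The plan is to reduce \eqref{Eq_ETS_global} in the case $\alpha = 0$ to a classical Hessian--type equation $\hess(u) = \psi\, g$, and then combine a local warped product analysis around regular level sets of $f$ with Tashiro's rigidity theorem for complete Riemannian manifolds carrying such a function. The remark after Definition \ref{DE_GETM} ensures that $\alpha = 0$ forces $\beta \neq 0$, so I normalize $\beta = 1$ and rewrite the structural equation as
\[
\hess(f) + \mu\, df \otimes df \,=\, h\, g, \qquad h := \rho S + \lambda \in \cinf.
\]

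The first step is local. Fix a regular level set $\Sigma$ of $f$ and let $r$ be the signed distance to $\Sigma$. On the neighbourhood where $\nabla f \neq 0$, the integral curves of $\nabla f/|\nabla f|$ are unit--speed geodesics orthogonal to the level sets, $f$ depends only on $r$, and the metric decomposes in Fermi coordinates as $g = dr \otimes dr + g_r$. For vectors $V, W$ tangent to the level sets, $df(V) = df(W) = 0$, so the structural equation reduces to $\hess(f)(V, W) = h\, g(V, W)$; on the other hand $\hess(f)(V, W) = f'(r)\, \II(V, W)$, with $\II$ the second fundamental form of the level set. Hence each level set is totally umbilical with $\II = (h/f')\, g_r$, and standard Fermi--coordinate arguments force $g_r = \phi(r)^2\, g^\Sigma$ with $\phi'(r)/\phi(r) = h(r)/f'(r)$. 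Evaluating the structural equation on $\partial_r$ yields $f''(r) + \mu f'(r)^2 = h(r)$, and eliminating $h$ between the two identities gives
\[
\frac{\phi'(r)}{\phi(r)} \,=\, \frac{f''(r)}{f'(r)} + \mu\, f'(r),
\]
which integrates to $\phi(r) = \phi(0)\, (f'(r)/f'(0))\, e^{\mu (f(r) - f(0))}$. Normalizing $\phi(0) = 1$ and $f(0) = 0$ produces the warped form \eqref{warped metric}.

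Next I would globalize the picture. Set $u := e^{\mu f}$ when $\mu \neq 0$ and $u := f$ when $\mu = 0$. A direct computation using the structural equation shows
\[
\hess(u) \,=\, \psi\, g, \qquad \psi := \mu\, u\, h \ \ (\mu \neq 0), \qquad \psi := h \ \ (\mu = 0),
\]
so $(M, g)$ carries a non--constant function whose Hessian is pointwise conformal to the metric. The classical theorem of Tashiro (see \cite{tashiro} and \cite{catmantmazz}) then implies that, on a complete Riemannian manifold, such a function has at most two critical points and that $(M, g)$ is globally conformally equivalent to one of three models: a Riemannian product $I \times N^{m-1}$ (no critical points), a Euclidean ball (one critical point), or the round sphere $\mathbb{S}^{m}$ (two critical points). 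Since $u$ is a strictly monotone smooth function of $f$ (and strictly positive when $\mu \neq 0$), the critical sets of $u$ and $f$ coincide, and the three alternatives in the statement follow, with the explicit warped profiles obtained by translating \eqref{warped metric} into the ambient conformal coordinates $t$ via the integration of $dr/dt$.

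The most delicate point I expect is the smooth extension of the warped profile $\phi(r) = f'(r)\, e^{\mu f(r)}/f'(0)$ across any critical point of $f$, which is needed to globalize the local form \eqref{warped metric} into the conformal normal forms of (1)--(3). This is precisely where the Tashiro analysis is essential: near a critical point of $u$, the Hessian equation $\hess(u) = \psi g$ forces a precise Taylor expansion of $u$ compatible with a rotationally symmetric pole of the Euclidean ball or the sphere, and the impossibility of more than two critical points on a complete manifold follows from a standard ODE comparison along a minimizing geodesic joining two critical points. The trivial possibility $f \equiv \mathrm{const}$ is excluded a priori, as it would force the degenerate identity $h \equiv 0$ and give no information on the metric.
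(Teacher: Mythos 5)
Your proposal is correct and follows essentially the same route as the paper: the identical computation around a regular level set (constancy of $|\nabla f|$ on the level set, umbilicity, the radial ODE $f''+\mu (f')^2=h$, and the integration yielding the warping function $f'(r)e^{\mu f(r)}/f'(0)$), followed by the Tashiro-type trichotomy according to the number of critical points. The only presentational difference is that you make the reduction to the concircular equation explicit via the substitution $u=e^{\mu f}$ (so that $\hess(u)=\mu u h\, g$) and invoke Tashiro directly, whereas the paper keeps $f$ and defers the case analysis on the zeros of the warping function $\omega$ to the computations of \cite{catmantmazz}; the two are equivalent since that warping function is proportional to $u'$.
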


In this case, we can obtain a stronger global result, just assuming nonnegativity of the Ricci curvature; namely we have the following

\begin{cor}\label{COR_alfa0_quasiYamabe}
Any nontrivial, complete, gradient Einstein type manifold with $\alpha=0$ and nonnegative Ricci curvature  is either rotationally symmetric or it is isometric to a Riemannian product $\mathbb{R}\times N^{m-1}$, where $N^{m-1}$ is an $(m-1)$-dimensional Riemannian manifold with nonnegative Ricci
curvature.
\end{cor}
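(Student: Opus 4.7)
The plan is to invoke Theorem \ref{TH_alfa0_quasiYamabe} and analyze the three possible cases separately. In cases (2) and (3) the conclusion that $\varrg$ is rotationally symmetric is already part of the statement, so the whole content of the corollary concerns case (1), in which $f$ has no critical points and every regular level set $\Sigma$ possesses a maximal neighbourhood $U\subset M$ on which
\begin{equation*}
g \,=\, dr\otimes dr \,+\, \phi(r)^{2}\,g^{\Sigma},\qquad \phi(r) \,:=\, \frac{f'(r)}{f'(0)}\,e^{\mu f(r)} \,>\, 0.
\end{equation*}
First I would observe that $U$ must coincide with $M$: since $f$ has no critical points the integral curves of $\nabla f/|\nabla f|$ never terminate, and because $r$ is the signed distance from $\Sigma$ along these curves and $|\nabla r|\equiv 1$, completeness of $g$ forces $r$ to range over all of $\mathbb{R}$.

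The next step is the standard warped-product curvature identity
\begin{equation*}
\ricc(\partial_{r},\partial_{r}) \,=\, -(m-1)\,\frac{\phi''(r)}{\phi(r)}.
\end{equation*}
The hypothesis $\ricc\geq 0$ therefore gives $\phi''\leq 0$ throughout $\mathbb{R}$. A positive smooth concave function on the whole real line must be constant: indeed, if $\phi'(r_{0})\neq 0$ at some point, concavity would push $\phi$ below zero either as $r\to+\infty$ or as $r\to-\infty$, violating positivity.

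Consequently $\phi\equiv c>0$, and setting $N^{m-1}:=(\Sigma, c^{2}g^{\Sigma})$ the metric $g$ acquires the Riemannian product form $dr\otimes dr + g^{N}$, so that $\varrg$ is isometric to $\mathbb{R}\times N^{m-1}$; completeness of $M$ transfers to $N$. Finally, because the Ricci tensor of a Riemannian product decomposes as $\ricc^{M}=0\oplus\ricc^{N}$, the assumption $\ricc^{M}\geq 0$ yields $\ricc^{N}\geq 0$, completing the proof. The only genuine, and rather mild, obstacle in the argument is the globalization step ensuring that the warped representation extends to all of $M$ with $r$ attaining every real value; once this is established, the concavity trick closes the proof at once.
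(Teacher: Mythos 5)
Your argument is correct and follows essentially the same route as the paper: the corollary is reduced to case (1) of Theorem \ref{TH_alfa0_quasiYamabe} (cases (2) and (3) already giving rotational symmetry), and the splitting is obtained from the warped-product identity $\ricc(\partial_r,\partial_r)=-(m-1)\phi''/\phi$ together with the fact that a positive concave function on all of $\mathbb{R}$ is constant. The paper does not write this step out but defers it to the computations of \cite{catmantmazz}, where precisely this globalization-plus-concavity argument is carried out.
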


This result covers the cases of Yamabe solitons~\cite{HDCaoSunZhang} and conformal gradient solitons~\cite{catmantmazz}. Concerning Yamabe quasi-solitons, Corollary~\ref{COR_alfa0_quasiYamabe} improves the results in~\cite{HuangLi_quasiYamabe}. In particular, this shows that most of the assumptions in \cite[Theorem 1.1]{HuangLi_quasiYamabe} are not necessary.

The paper is organized as follows. In Section \ref{sec_2} we recall some useful definitions and properties of various geometric tensors and fix our conventions and notation. Next, in Section \ref{sec_2.5} we deal with nonexistence of gradient Einstein-type structures, both in the degenerate and in the nondegenerate case,  and we give some sufficient conditions for $\lambda^L_1(M)<0$.
In Section \ref{sec_3}  we collect some useful commutations relations for covariant derivatives of functions and tensors. In Section \ref{sec_4} we treat the special case of gradient Einstein-type manifolds with $\alpha=0$ proving Theorem \ref{TH_alfa0_quasiYamabe} and Corollary \ref{COR_alfa0_quasiYamabe}. In Section \ref{sec_5} we prove the two aforementioned integrability conditions that follow directly from the Einstein-type structures. In Section \ref{sec_6} we compute the squared norm of the tensor $D$ in terms of $D$ itself, the Bach tensor $B$ and the potential function $f$. In Section \ref{sec_7} we relate the tensor $D$ to the geometry of the regular level sets of the potential function $f$. Finally, in Section \ref{sec_8} we prove Theorem \ref{TH_Main} and Corollary \ref{COR_Main}, and we give some geometric applications in the special cases of gradient Ricci solitons, $\rho$-Einstein solitons and Ricci almost solitons.

\

\section{Definitions and notation}
\label{sec_2}

In this section we recall some useful definitions and properties of various geometric tensors and fix our conventions and notation (see also \cite{MasRigSet}).

To perform computations, we freely use the method of the moving
frame referring to a local orthonormal coframe of the
$m$-dimensional Riemannian manifold $\varrg$. We fix the index range
$1\leq i, j, \ldots \leq m$ and recall that the Einstein summation
convention will be in force throughout.

We denote with $\operatorname{R}$ the \emph{Riemann curvature
tensor} (of type $\pa{1, 3}$) associated to the metric $g$, and with
$\ricc$ and $S$ the corresponding \emph{Ricci tensor} and
\emph{scalar curvature}, respectively. The components of the $(0,
4)$-versions of the Riemann tensor and of the \emph{Weyl tensor}
$\operatorname{W}$ are related by the formula:
\begin{equation}\label{Riemann_Weyl}
  R_{ijkt} = W_{ijkt} + \frac{1}{m-2}\pa{R_{ik}\delta_{jt}-R_{it}\delta_{jk}+R_{jt}\delta_{ik}-R_{jk}\delta_{it}}-\frac{S}{(m-1)(m-2)}\pa{\delta_{ik}\delta_{jt}-\delta_{it}\delta_{jk}}
\end{equation}
and they satisfy the symmetry relations
\begin{eqnarray}
&&R_{ijkt} = -R_{jikt} = -R_{ijtk} = R_{ktij},\\
&&W_{ijkt} = -W_{jikt} =-W_{ijtk} = W_{ktij}.
\end{eqnarray}
A computation shows that the Weyl tensor is also totally trace-free. According to this convention the (components of the) Ricci tensor
and the scalar curvature are respectively given by $R_{ij} = R_{itjt} = R_{titj}$ and $S = R_{tt}$. The \emph{Schouten tensor} $\mathrm{A}$ is defined by
\begin{equation}\label{def_Schouten}
  \mathrm{A} = \ricc -\frac{S}{2(m-1)}g .
\end{equation}
Tracing we have $\operatorname{tr}(\mathrm{A}) = A_{tt} = \frac{(m-2)}{2(m-1)}S$.
\begin{rem}
  Some authors adopt a different convention and define the Schouten tensor as $\frac{1}{m-2}A$.
\end{rem}

We note that, in terms of the Schouten tensor and of the Weyl
tensor, the Riemann curvature tensor can be expressed in the form
\begin{equation}\label{decompRiemSchouten}
  \operatorname{R} = \textrm{W} + \frac{1}{m-2}\textrm{A}\owedge g,
\end{equation}
where $\owedge$ is the Kulkarni-Nomizu product; in components,
\begin{equation}\label{Riemann_Weyl_Schouten}
  R_{ijkt} = W_{ijkt} + \frac{1}{m-2}\pa{A_{ik}\delta_{jt}-A_{it}\delta_{jk}+A_{jt}\delta_{ik}-A_{jk}\delta_{it}}.
\end{equation}

Next we introduce the \emph{Cotton tensor} $C$ as the obstruction to
the commutativity of the covariant derivative of the Schouten
tensor, that is
 \begin{equation}\label{def_Cotton_comp}
   C_{ijk} = A_{ij, k} - A_{ik, j} = R_{ij, k} - R_{ik, j} - \frac{1}{2(m-1)}\pa{S_k\delta_{ij}-S_j\delta_{ik}}.
 \end{equation}
 We also recall that the Cotton tensor, for $m\geq 4$, can be defined as one of the possible divergences of the Weyl
 tensor; precisely
 \begin{equation}\label{def_Cotton_comp_Weyl}
 C_{ijk}=\pa{\frac{m-2}{m-3}}W_{tikj, t}=-\pa{\frac{m-2}{m-3}}W_{tijk, t}.
 \end{equation}
 A computation shows that the two definitions (for $m\geq4$) coincide (see again \cite{MasRigSet}).

\begin{rem}
It is worth to recall that the Cotton tensor is skew-symmetric in
the second and third indices (i.e. $C_{ijk}=-C_{ikj}$) and totally
trace-free (i.e. $C_{iik}=C_{iki}=C_{kii}=0$).
\end{rem}

We are now ready to define the \emph{Bach tensor} $B$, originally
introduced by Bach in \cite{Bach} in the study of conformal
relativity. Its components are
 \begin{equation}\label{def_Bach_comp}
   B_{ij} =  \frac{1}{m-2}\pa{C_{jik, k}+R_{kt}W_{ikjt}},
 \end{equation}
that, in case $m\geq 4$, by \eqref{def_Cotton_comp_Weyl} can be
alternatively written as
 \begin{equation}
  B_{ij} = \frac{1}{m-3}W_{ikjt, tk} + \frac{1}{m-2}R_{kt}W_{ikjt}.
\end{equation}
Note that if $\varrg$ is either locally conformally flat (i.e. $C=0$
if $m=3$ or $W=0$ if $m\geq 4$) or Einstein, then $B=0$. A
computation  shows that the Bach tensor is symmetric (i.e.
$B_{ij}=B_{ji}$) and evidently trace-free (i.e. $B_{ii}=0$). As a
consequence we observe that we can write
  \[
   B_{ij}= \frac{1}{m-2}\pa{C_{ijk, k}+R_{kl}W_{ikjl}}.
  \]


We recall that
\begin{defi}
  The manifold $\varrg$ is \emph{conformally Einstein} if its metric $g$
  can be pointwise conformally deformed to an Einstein metric
  $\tilde g$.
\end{defi}
We observe that, if $\tilde g = e^{2a\varphi}g$, for some $\varphi
\in \cinf$ and some constant $a\in \erre$, then its Ricci tensor
$\widetilde{\ricc}$ is related to that of $g$ by the well-known
formula (see for instance \cite{MasRigSet})
\begin{equation}\label{2.11}
  \widetilde{\ricc} = \ricc
  -(m-2)a\hess(\varphi)+(m-2)a^2d\varphi\otimes\varphi-\sq{(m-2)a^2\abs{\nabla
  \varphi}^2+a\Delta\varphi}g.
\end{equation}
Here the various operators (and for their precise definitions see
Section \ref{sec_3}) are defined with respect to the metric $g$.

Now we can easily prove  statement \eqref{121}; indeed, suppose that
$\beta \neq 0$ and $\beta^2=(m-2)\alpha\mu$, that is, the
Einstein-type structure is degenerate. Tracing \eqref{Eq_ETS_global}
we obtain
\begin{equation}\label{2.12}
  \frac{1}{\alpha}\pa{\rho S+\lambda} = \frac{1}{m}\pa{S+\frac{\beta}{\alpha}\Delta f+\frac{\mu}{\alpha}\abs{\nabla
  f}^2}.
\end{equation}
Choose $\varphi=f$ and $a=-\frac{\beta}{(m-2)\alpha}$ in
\eqref{2.11} to obtain
\begin{equation}\label{2.13}
\widetilde{\ricc} =
\frac{1}{\alpha}\sq{\frac{\beta^2}{(m-2)\alpha}-\mu}df \otimes df
+\frac{1}{\alpha}\pa{\rho S+\lambda}g
+\frac{\beta}{(m-2)\alpha}\pa{\Delta
f-\frac{\beta}{\alpha}\abs{\nabla f}^2}g.
\end{equation}
Inserting \eqref{2.12} into \eqref{2.13} yields
\begin{equation*}
\widetilde{\ricc} =
\frac{1}{\alpha}\sq{\frac{\beta^2}{(m-2)\alpha}-\mu}df \otimes df
+\frac{1}{m}\sq{S+2\frac{\beta}{\alpha}\frac{m-1}{m-2}\Delta
f-\frac{\mu}{\alpha}\pa{m-1}\abs{\nabla f}^2}g.
\end{equation*}
Hence, since $\beta^2=(m-2)\alpha\mu$,
\begin{equation}\label{2.13.1}
\widetilde{\ricc} =
\frac{1}{m}\sq{S+2\frac{\beta}{\alpha}\frac{m-1}{m-2}\Delta
f-\frac{\mu}{\alpha}\pa{m-1}\abs{\nabla f}^2}g,
\end{equation}
that is, $\tilde{g} = e^{-\frac{2\beta}{(m-2)\alpha}f}g$ is an
Einstein metric (this was also obtained in Theorem 1.159 of
\cite{Besse}).

Viceversa, suppose that $\tilde{g}= e^{2af}g$, $a\neq 0$, is an
Einstein metric, so that, for some $\Lambda\in\erre$,
$\widetilde{\ricc}=\Lambda\tilde{g}$. From \eqref{2.11}
\begin{equation}\label{2.14}
\ricc-(m-2)a\hess(f)+(m-2)a^2df\otimes df=\sq{\Lambda
e^{2af}+(m-2)a^2\abs{\nabla f}^2+a\Delta f}g.
\end{equation}
Tracing we get
\begin{equation*}
\frac{S}{m-1}=\sq{(m-2)a^2\abs{\nabla f}^2+a\Delta f}+a\Delta
f+\frac{m}{m-1}\Lambda e^{2af}.
\end{equation*}
Thus, inserting into \eqref{2.14},
\begin{equation*}
\ricc-(m-2)a\hess(f)+(m-2)a^2df\otimes df=\pa{\frac{S}{m-1}-a\Delta
f-\frac{\Lambda}{m-1}e^{2af}}g.
\end{equation*}
We choose $\alpha=1$, $\beta=-(m-2)a$, $\mu=(m-2)a^2$,
$\rho=\frac{1}{m-1}$ and $\lambda(x)=-a\Delta
f-\frac{\Lambda}{m-1}e^{2af}$. We note that $\beta\neq0$ and
\begin{equation*}
\beta^2=(m-2)^2a^2=(m-2)\alpha\mu,
\end{equation*}
so that the above choice of $\alpha,\beta,\mu,\rho$ and $\lambda$
yields a degenerate Einstein-type structure.

\section{Nonexistence of gradient Einstein-type structures}\label{sec_2.5}

In this section we comment on the nonexistence of gradient
Einstein-type structures on $\varrg$.
From now on we fix an origin $o\in M$ and let $r(x)=\operatorname{dist}\pa{x, o}$. We set $B_r$ and $\partial B_r$ to denote, respectively, the geodesic ball of radius $r$ centered at $o$ and its boundary.

We begin with considering the
\emph{degenerate case}. In this situation $\beta\neq0$ and $\beta^2=(m-2)\alpha\mu$; in particular $\alpha, \mu \neq 0$. Multiplying equation \eqref{Eq_ETS_global} by $\frac{1}{m-2}\frac{\mu}{\alpha}$ and setting $h=\frac{\mu}{\beta}f$, using the relation $\beta^2=(m-2)\alpha\mu$ we immediately obtain
 \[
 \frac{\mu}{m-2}\ricc +\mu\hess(h)+\mu dh\otimes dh = \pa{\frac{\mu}{\beta}}^2\pa{\rho S+\lambda}g,
 \]
that is, another degenerate gradient Einstein-type structure. Using \eqref{2.13.1} with our new constants and with $h$ replacing $f$ we deduce the existence of a constant $\Lambda \in \erre$ such that
\[
\Lambda e^{-2h} = S +2(m-1)\Delta h-(m-1)(m-2)\abs{\nabla h}^2.
\]

We set $u=e^{-\frac{m-2}{2}h}$ so that, using the above, $u$ becomes a positive solution of the Yamabe equation

%
%
\begin{equation}\label{2.5.2}
  4\frac{m-1}{m-2}\Delta u -S(x)u+\Lambda u^{\frac{m+2}{m-2}}=0.
\end{equation}
Hence, every time \eqref{2.5.2} has no positive solution, we can
conclude that $\varrg$ has no degenerate gradient Einstein-type
structure. Nonexistence for \eqref{2.5.2} heavily depends on the
sign of $\Lambda$; indeed, let us consider first the case $\Lambda \geq
0$. Thus $u$ satisfies
\[
4\frac{m-1}{m-2}\Delta u -S(x)u \leq 0, \quad u>0 \quad \text{on }
M.
\]
By \cite{FCS}, if $\mathfrak{L}= \Delta  -S(x)\frac{m-2}{4(m-1)}$, then
$\lambda^\mathfrak{L}_1(M)\geq 0$. Hence, in this case, every time we can
guarantee that $\lambda^\mathfrak{L}_1(M)< 0$, there do not exist positive
solutions of \eqref{2.5.2} on $M$. We will give some sufficient
conditions for this at the end of the section.

For the case $\Lambda <0$ the situation is more involved. We recall that with our
choices
\[
u= e^{-\frac{m-2}{2}h}=e^{-\frac{m-2}{2}\frac{\mu}{\beta}f},
\]
so that $u\in L^2(M)$ if and only if $e^{-\pa{m-2}\frac{\mu}{\beta}f} \in L^1(M)$.
Applying Proposition 3.1 of \cite{MasRigSet} we have that for
$\Lambda<0$ there are no gradient, degerate, Einstein-type
structures with $e^{-\pa{m-2}\frac{\mu}{\beta}f} \in L^1(M)$, provided that
$\lambda^\mathfrak{L}_1(M)\geq 0$, $\mathfrak{L}$ as above. The request on the
integrability of $e^{-\pa{m-2}\frac{\mu}{\beta}f}$ can be replaced by
\[
f(x)\ra +\infty \quad\text{as } r(x)\ra +\infty,
\]
provided $\lambda_1^\mathfrak{L}\pa{\operatorname{supp}S_-}>0$, see Theorem
3.12 of \cite{MasRigSet}. Note that, since $\operatorname{supp}S_-$ is a closed set
we need to extend the definition of $\lambda^\mathfrak{L}_1$ to this case. For
a generic bounded subset $D$ of $M$ we set
\[
\lambda_1^\mathfrak{L}(D)=\sup \lambda^\mathfrak{L}_1(\Omega),
\]
where the supremum is taken over all open, bounded sets with smooth
boundary $\Omega$ such that $D\subset\Omega$. Note that, by
definition, if $D=\emptyset$ then $\lambda^\mathfrak{L}_1(D)=+\infty$. Finally,
if $D$ is an unbounded subset of $M$, we define
\[
\lambda_1^\mathfrak{L}(D)=\inf \lambda_1^\mathfrak{L}(D\cap\Sigma),
\]
where the infimum is taken over all bounded open sets $\Sigma$ with
smooth boundary. Observe that, since $\lambda_1^\mathfrak{L}(B_r)\sim
\frac{C}{r^2}$ for some constant $C>0$ as $r\ra+\infty$
(see e.g. \cite{ChBookEigen}) and $B_r$ is a geodesic ball centered at
$p\in M$, the condition $\lambda_1^\mathfrak{L}(\operatorname{supp}S_-)>0$
means that the set $\operatorname{supp}S_-$ is small in a suitable
spectral sense.

Again, using Theorem 5.12 of \cite{MasRigSet}, there are no gradient
degenerate Einstein-type structures on $\varrg$ with
\[
f(x)\ra-\infty\quad\text{as } r(x)\ra +\infty,
\]
for which
\[
\sup_M S_-(x)<+\infty
\]
and
\[
\liminf_{r\ra+\infty} \frac{\log\operatorname{vol}\pa{B_r}}{r^2}<+\infty,
\]
where $\operatorname{vol}\pa{B_r}$ denotes the volume of the geodesic ball $B_r$.
The above discussion also shows the important role played by the
sign of the first eigenvalue of the Dirichlet problem for the
operator $\mathfrak{L}$.

\vspace{0.3cm}

We now analyze the existence for a \emph{nondegenerate gradient Einstein
structure}. As remarked in the introduction, letting
$L=\Delta-\frac{\mu}{\beta}\sq{m\lambda(x)-(m\rho-\alpha)S(x)}$ we have nonexistence
every time $\lambda_1^L(M)<0$.
We let
\[
\frac{\mu}{\beta}\sq{m\bar{\lambda}(r)+\pa{m\rho-\alpha}\bar{S}(r)}=\frac{\mu}{\beta}\frac{1}{\operatorname{vol}\pa{\partial B_r}}\sq{m\int_{\partial B_r}\lambda(x)+\pa{m\rho-\alpha}\int_{\partial B_r}S(x)},
\]
i.e. the radialization of the zeroth-order term.
Note that, given any sufficiently regular function $q(x)$, by the co-area formula
\[
\int_0^R\bar{q}(s)\,\operatorname{vol}\pa{\partial B_s}\,ds = \int_{B_R}q(x).
\]
This fact and the Rayleigh characterization of the first eigenvalue of the Dirichlet problem on the ball $B_R$ justify assumptions on the radialization of the zeroth-order term rather than on the term itself. To simplify the writing we set $v(r)=\operatorname{vol}\pa{\partial B_r}$ and let $\hat{v}(r)$ satisfy $\hat{v}\in L^\infty_{loc}\pa{[0, +\infty)}$, $\frac{1}{\hat{v}}\in L^\infty_{loc}\pa{(0, +\infty)}$, $0\leq v\leq \hat{v}$ on $[0, +\infty)$. We suppose
\[
\frac{1}{\hat{v}}\in L^1\pa{+\infty}
\]
and we define the critical curve associated to $\hat{v}$, $\chi_{\hat{v}}$, by setting
\[
\chi_{\hat{v}}(r) = \set{2\hat{v}(r)\int_r^{+\infty}\frac{ds}{\hat{v}(s)}}^{-2}.
\]
By using Theorem 6.15 in \cite{BMR_Osc} we give the following sufficient condition for the instability of $L$. Assume that
\begin{equation}\label{2.5.3}
  \bar{q}(r)=\frac{\mu}{\beta}\pa{m\bar{\lambda}(r)+\pa{m\rho-\alpha}\bar{S}(r)}\leq 0, \quad \bar{q}\not\equiv 0,
\end{equation}
that $v(r)$ and $\hat{v}(r)$ are as above and that
\begin{equation}\label{2.5.4}
  \limsup_{r\ra +\infty}\int_R^r\pa{\sqrt{\abs{\bar{q}(s)}}-\sqrt{\chi_{\hat{v}}(s)}}\,ds = +\infty
\end{equation}
for some $R\gg 1$. Then $L$ is unstable (in fact, $L$ has infinite index).

We can even prove that $\lambda^L_1(M)<0$ under a less restrictive condition, but in order to avoid technicalities we adopt \eqref{2.5.4}. Indeed, it is not difficult to simplify \eqref{2.5.4} in case we give an explicit upper bound for $v(r)$. For instance, if $\hat{v}(r)=\zeta r^\sigma$, that is
\[
\operatorname{vol}\pa{\partial B_r} \leq \zeta r^\sigma
\]
for $r \gg 1$ and some constants $\zeta>0$, $\sigma>1$, \eqref{2.5.4} becomes
\begin{equation}\label{2.5.5}
   \limsup_{r\ra +\infty}\set{\int_R^r\sqrt{\abs{\bar{q}(s)}}\,ds -\frac{\sigma-1}{2}\log r}= +\infty,
\end{equation}
while for an exponential bound
\[
\operatorname{vol}\pa{\partial B_r} \leq \zeta r^\theta e^{a r^\sigma \log^\tau r}
\]
for $r \gg 1$ and some constants $\zeta, a, \sigma>0$, $\tau\geq 0$, $\theta\in\erre$, \eqref{2.5.4} is equivalent to
\begin{equation}\label{2.5.6}
   \limsup_{r\ra +\infty}\set{\int_R^r\sqrt{\abs{\bar{q}(s)}}\,ds -\frac{a}{2}r^\sigma\log^\tau r-\frac{\sigma+\theta-1}{2}\log r-\frac{\tau}{2}\log\log r}= +\infty.
\end{equation}
As a final remark we observe that condition \eqref{2.5.3} can be relaxed. We refer the interested reader to sections 6.6 and 6.7 in Chapter 6 of \cite{BMR_Osc}.

\section{Some basics on moving frames and commutation rules}
\label{sec_3}

In this section we collect some useful commutation relations for
covariant derivatives  of functions and tensors  that will be used
in the rest of the paper.

Let $\riemanng{M}$ be a Riemannian manifold of dimension $m\geq 3$.
For the sake of completeness (see \cite{MasRigSet} for details)  we
recall that, having fixed a (local) orthonormal coframe
$\set{\theta^i}$,  with dual frame $\set{e_i}$, then the
corresponding \emph{Levi-Civita connection forms}
$\set{\theta^i_j}$  are the $1$-forms uniquely defined by the
requirements
\begin{align}
  &d\theta^i = -\theta^i_j \wedge
  \theta^j \quad \text{(first structure equations)}, \label{1_firstStructureEq} \\
  &\theta^i_j + \theta^j_i = 0. \label{1_skewsymmConnForm}
\end{align}
The \emph{curvature forms} $\set{\Theta^i_j}$ associated to the
connection are the $2$-forms defined via the \emph{second structure
equations}
\begin{equation}\label{1_secondStructureEq}
  {d\theta^i_j = -\theta^i_k \wedge \theta^k_j + \Theta^i_j.}
\end{equation}
They are skew-symmetric (i.e. $\Theta^i_j + \Theta^j_i = 0$) and they can be written as
\begin{equation}\label{1_def_forme_curvatura}
  \Theta^i_j = \frac{1}{2}R^i_{jkt}\theta^k \wedge \theta^t = \sum_{k<t}R^i_{jkt}\theta^k \wedge \theta^t,
\end{equation}
where $R^i_{jkt}$ are precisely the coefficients of the ($(1,
3)$-version of the) Riemann curvature tensor.

 The \emph{covariant derivative of a vector field} $X \in \mathfrak{X}(M)$ is defined
 by
\[
\nabla X = (dX^i + X^j\theta^i_j)\otimes e_i=X^i_k\theta^k \otimes e_i,
\]
while the \emph{covariant derivative of a $1$-form} $\omega$ is
defined by
\[
\nabla \omega = (d\omega_i-w_j\theta^j_i)\otimes \theta^i= \omega_{ik}\theta^k \otimes \theta^i.
\]
The \emph{divergence} of the vector field $X\in\mathfrak{X}(M)$  is the trace of the endomorphism $(\nabla X)^{\sharp} : TM \ra TM$, that is,
\begin{equation}\label{1_divergence}
  \operatorname{div}X = \operatorname{tr}\pa{\nabla X}^{\sharp} = g\pa{\nabla_{e_i}X, e_i} = X_i^i.
\end{equation}
For a smooth function $f$ we can write
\begin{equation}\label{DifferentialComponents}
df = f_i\theta^i,
\end{equation}
for some smooth coefficients $f_i\in\cinf$. The \emph{Hessian} of $f$, $\hess(f)$, is the $(0, 2)$-tensor defined as
\begin{equation}
  \hess(f) = \nabla df = f_{ij}\theta^j\otimes\theta^i,
\end{equation}
with
\begin{equation}\label{HessianComponents}
f_{ij}\theta^j = df_i - f_t\theta_i^t.
\end{equation}
Note that (see Lemma \ref{LemmaCommRulesFunctions} below)
\[
f_{ij}=f_{ji}.
\]

The \emph{Laplacian} of $f$, $\Delta f$,  is the trace of the
Hessian, in other words
\[
\Delta f = \operatorname{tr}(\hess(f)) = f_{ii}.
\]

The moving frame formalism reveals extremely useful in determining
the commutation rules of geometric tensors (see again \cite{MasRigSet} for
details). Some of them will be essential in our computations.

\begin{lemma}\label{LemmaCommRulesFunctions} If $f\in C^3(M)$ then:
\begin{align}
  f_{ij} &= f_{ji}; \label{SecondDerivFunction}\\ f_{ijk} &= f_{jik}; \label{CovDerivSecondDerivFct}\\ f_{ijk} &= f_{ikj}+f_tR_{tijk}; \label{ThirdDerivFunctionRiem}
  \\f_{ijk} &= f_{ikj}+f_tW_{tijk}+\frac{1}{m-2}\pa{f_tR_{tj}\delta_{ik}-f_tR_{tk}\delta_{ij}+f_jR_{ik}-f_kR_{ij}}\label{ThirdDerivFunctionWeyl}\\\nonumber &-\frac{S}{(m-1)(m-2)}\pa{f_j\delta_{ik}-f_k\delta_{ij}}; \\ f_{ijk} &= f_{ikj}+f_tW_{tijk}+\frac{1}{m-2}\pa{f_tA_{tj}\delta_{ik}-f_tA_{tk}\delta_{ij}+f_jA_{ik}-f_kA_{ij}};\label{commutatioThirdDerFunctWeilSchouten}
  \end{align}
In particular, tracing  \eqref{ThirdDerivFunctionRiem}  we deduce
\begin{align}
  f_{itt} &= f_{tti}+f_tR_{ti}. \label{TracedThirdDerivFunctionRicci}
\end{align}
\end{lemma}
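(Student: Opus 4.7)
The plan is to derive each identity by repeated use of $d^2\equiv 0$ and the two structure equations \eqref{1_firstStructureEq}--\eqref{1_secondStructureEq}, following the moving-frame formalism already set up in the paper.

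First, for \eqref{SecondDerivFunction}, I would take the exterior derivative of $df=f_i\theta^i$. Since $d^2f=0$, combining with \eqref{1_firstStructureEq} yields $df_i\wedge\theta^i-f_i\theta^i_j\wedge\theta^j=0$, which after substituting the definition \eqref{HessianComponents} becomes $f_{ij}\theta^j\wedge\theta^i=0$, forcing $f_{ij}=f_{ji}$ by the skew-symmetry of the wedge product. Identity \eqref{CovDerivSecondDerivFct} is then immediate: the covariant derivative of the Hessian, viewed as a $(0,2)$-tensor, inherits the symmetry of its underlying bilinear form in the first two slots.

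Next, for the Ricci-type identity \eqref{ThirdDerivFunctionRiem}, I would differentiate the relation \eqref{HessianComponents} and use the second structure equation \eqref{1_secondStructureEq} to produce the curvature contribution. Expanding $df_{ij}=f_{ijk}\theta^k+f_{tj}\theta^t_i+f_{it}\theta^t_j$ via the definition of third covariant derivative of a $(0,2)$-tensor, together with $d\theta^j=-\theta^j_k\wedge\theta^k$ and $d\theta^t_i=-\theta^t_k\wedge\theta^k_i+\Theta^t_i$, one is left, after the cancellations that use \eqref{SecondDerivFunction}, with $(f_{ijk}-f_{ikj})\theta^j\wedge\theta^k=2f_t\Theta^t_i$, which upon comparison with \eqref{1_def_forme_curvatura} gives \eqref{ThirdDerivFunctionRiem}. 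Then \eqref{ThirdDerivFunctionWeyl} follows by plugging the Weyl decomposition \eqref{Riemann_Weyl} of $R_{tijk}$ into \eqref{ThirdDerivFunctionRiem}, and \eqref{commutatioThirdDerFunctWeilSchouten} follows analogously from the Schouten form \eqref{Riemann_Weyl_Schouten} after a straightforward rearrangement using the definition \eqref{def_Schouten} of $\mathrm{A}$.

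For the traced identity \eqref{TracedThirdDerivFunctionRicci}, I would contract \eqref{ThirdDerivFunctionRiem} by setting $k=i$ and summing, obtaining $f_{iji}=f_{iij}+f_t R_{tiji}$; applying \eqref{CovDerivSecondDerivFct} to rewrite $f_{iji}=f_{jii}$, and using the pair-swap symmetry $R_{tiji}=R_{jiti}$ together with the paper's convention $R_{jt}=R_{jsts}$, one finds $R_{tiji}=R_{jt}$, so that $f_{jii}=f_{iij}+f_tR_{jt}$. A relabeling of the free and dummy indices then yields \eqref{TracedThirdDerivFunctionRicci}. The main obstacle throughout is simply the book-keeping: maintaining consistent index placements and signs under the conventions fixed in Section \ref{sec_2}; none of the individual manipulations is deep, but the cumulative cancellations require care.
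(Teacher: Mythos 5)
Your proposal is correct and follows essentially the same route as the paper: $d^2f=0$ with the first structure equation for \eqref{SecondDerivFunction}, covariant differentiation for \eqref{CovDerivSecondDerivFct}, exterior differentiation of \eqref{HessianComponents} together with the second structure equation and skew-symmetrization for \eqref{ThirdDerivFunctionRiem} (your intermediate identity $(f_{ijk}-f_{ikj})\theta^j\wedge\theta^k=2f_t\Theta^t_i$ matches the paper's $f_{ikj}\theta^j\wedge\theta^k=-\tfrac12 f_tR^t_{ijk}\theta^j\wedge\theta^k$ after antisymmetrization), then substitution of the Weyl/Schouten decompositions and a trace. The index bookkeeping in your traced identity also checks out against the convention $R_{ij}=R_{itjt}$.
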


\begin{proof}
Let $df = f_i\theta^i$. Differentiating and using the structure equations we get
\begin{align*}
0&=df_i \wedge \theta^i + f_i d\theta^i = (f_{ij}\theta^j +
f_k\theta^k_i)\wedge\theta^i - f_i\theta^i_k \wedge \theta^k =
f_{ij}\theta^j \wedge \theta^i= \frac 12 \pa{f_{ij}-f_{ji}}\theta^j \wedge \theta^i,
\end{align*}
thus
\[
0= \sum_{1 \leq j <i \leq m}(f_{ij}-f_{ji})\theta^j \wedge \theta^i;
\]
since $\set{\theta^j \wedge \theta^i}$  $\pa{1 \leq j < i \leq m}$
is a basis for the $2$-forms we get equation
\eqref{SecondDerivFunction}. Equation \eqref{CovDerivSecondDerivFct}
follows taking the covariant derivative of
\eqref{SecondDerivFunction}. As for \eqref{ThirdDerivFunctionRiem},
by definition of covariant derivative we have
\begin{equation}\label{1_derivate_terze}
  f_{ijk}\theta^k = df_{ij} - f_{kj}\theta^k_i - f_{ik}\theta^k_j.
\end{equation}

Differentiating equation \eqref{HessianComponents} and using the structure
equations we get
\begin{align*}
  df_{ik}\wedge\theta^k - f_{ij}\theta^j_k \wedge\theta^k &= - df_t
\wedge \theta^t_i + f_k\theta^k_t \wedge \theta^t_i - f_k\Theta^k_i
= \\ &=-(f_{tk}\theta^k + f_k\theta^k_t) \wedge\theta^t_i +
f_k\theta^k_t \wedge \theta^t_i - \frac{1}{2}f_kR^k_{ijt} \theta^j
\wedge \theta^t,
\end{align*}
thus
\[
(df_{ik} - f_{tk}\theta^t_i - f_{it}\theta^t_k)\wedge \theta^k
=-\frac{1}{2}f_t R^t_{ijk}\theta^j \wedge \theta^k,
\]
and, by \eqref{1_derivate_terze},
\[
f_{ikj}\theta^j \wedge \theta^k = -\frac{1}{2} f_t R^t_{ijk}
\theta^j \wedge \theta^k.
\]
Skew-symmetrizing we get
\[
\frac{1}{2}(f_{ikj}-f_{ijk})\theta^j \wedge \theta^k =
-\frac{1}{2}f_tR^t_{ijk}\theta^j \wedge\theta^k,
\]
that is, \eqref{ThirdDerivFunctionRiem}. Equations
\eqref{ThirdDerivFunctionWeyl} and
\eqref{commutatioThirdDerFunctWeilSchouten} follow easily from
\eqref{ThirdDerivFunctionRiem}, using the definitions of the Weyl
tensor and of the Schouten tensor (see Section 2).
\end{proof}

For the Riemann curvature tensor we recall the classical Bianchi identities, that in our formalism become
\begin{align}
  &R_{ijkt}+R_{itjk}+R_{iktj}=0  \quad \text{(First Bianchi Identities)};\label{FirstBianchiRiem}\\ &R_{ijkt, l}+R_{ijlk, t}+R_{ijtl, k}=0  \quad \text{(Second Bianchi Identities)}. \label{SecondBianchiRiem}
\end{align}
For the second derivatives of $\operatorname{R}$ we have
\begin{lemma}\label{LemmaSTRiemann}
\begin{align}
   &R_{ijkt, lr}-R_{ijkt, rl} = R_{sjkt}R_{silr}+R_{iskt}R_{sjlr}+R_{ijst}R_{sklr}+R_{ijks}R_{stlr}. \label{SecondDerivRiem}
\end{align}
\end{lemma}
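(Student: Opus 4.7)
The plan is to derive \eqref{SecondDerivRiem} by a moving-coframe calculation that mirrors, slot by slot, the derivation of \eqref{ThirdDerivFunctionRiem} in Lemma \ref{LemmaCommRulesFunctions}, but applied to the $(0,4)$-tensor $\operatorname{R}$ in place of a function. I would start from the defining identity for the covariant derivative of $\operatorname{R}$,
\begin{equation*}
R_{ijkt,l}\,\theta^l \;=\; dR_{ijkt} - R_{sjkt}\theta^s_i - R_{iskt}\theta^s_j - R_{ijst}\theta^s_k - R_{ijks}\theta^s_t,
\end{equation*}
which contains one correction term per tensor slot, together with the corresponding formula for $R_{ijkt,lr}\theta^r$ at the next order.

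Next I would exterior differentiate the first relation and wedge with $\theta^l$ on the right. Using the first structure equations \eqref{1_firstStructureEq} on $d\theta^l$ and the second structure equations \eqref{1_secondStructureEq} on each $d\theta^s_\bullet$, two kinds of terms appear: bilinear products of connection forms of the shape $\theta^s_\bullet\wedge\theta^p_s$, which cancel in pairs exactly as in the function case, and four curvature contributions, one per slot. Comparing with the expression for $R_{ijkt,lr}\,\theta^r\wedge\theta^l$ obtained from the definition at second order gives
\begin{equation*}
R_{ijkt,lr}\,\theta^r\wedge\theta^l \;=\; -\bigl(R_{sjkt}\Theta^s_i + R_{iskt}\Theta^s_j + R_{ijst}\Theta^s_k + R_{ijks}\Theta^s_t\bigr).
\end{equation*}

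Finally I would substitute $\Theta^s_p = \tfrac12 R_{splr}\,\theta^l\wedge\theta^r$ (raising and lowering of indices being harmless in an orthonormal coframe), skew-symmetrize the left-hand side in the pair $(l,r)$, and match coefficients with respect to the basis $\{\theta^l\wedge\theta^r\}_{l<r}$, precisely as in the closing step of the proof of \eqref{ThirdDerivFunctionRiem}. The four curvature terms then align in the correct slots to yield \eqref{SecondDerivRiem} with the stated signs.

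The main obstacle is bookkeeping rather than geometry: one must track four correction terms in the covariant derivative instead of a single one, and verify that every cross-term in the connection forms cancels correctly. No new ingredient beyond the structure equations is required, so the argument is a direct four-slot extension of the calculation carried out in Lemma \ref{LemmaCommRulesFunctions}.
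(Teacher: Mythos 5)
Your proposal is correct and follows essentially the same route as the paper: differentiate the defining relation for $R_{ijkt,l}\theta^l$, apply the first and second structure equations, observe the cancellation of the quadratic connection-form terms, substitute $\Theta^s_p=\frac12 R_{splr}\theta^l\wedge\theta^r$, and skew-symmetrize against the basis $\{\theta^l\wedge\theta^r\}_{l<r}$. The signs and slot-by-slot bookkeeping you describe match the paper's computation.
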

\begin{proof}
 By definition of covariant derivative we have
 \begin{equation}\label{firstCovDerivRiemComp}
   R_{ijkt, l}\theta^l = d R_{ijkt}-R_{ljkt}\theta^l_i-R_{ilkt}\theta^l_j-R_{ijlt}\theta^l_k-R_{ijkl}\theta^l_t
 \end{equation}
 and
  \begin{equation}\label{secondCovDerivRiemComp}
   R_{ijkt, lr}\theta^r = d R_{ijkt, l}-R_{ljkt, l}\theta^r_i-R_{irkt, l}\theta^r_j-R_{ijrt, l}\theta^r_k-R_{ijkr, l}\theta^r_t-R_{ijkt, r}\theta^r_l.
 \end{equation}
 Differentiating equation \eqref{firstCovDerivRiemComp} and using the first structure equations we get
 \begin{align}
   d R_{ijkt, s}\wedge \theta^s-R_{ijkt, l}\theta^l_s\wedge\theta^s &= -d R_{ljkt}\wedge\theta^l_i+R_{ljkt}\pa{\theta^l_s\wedge\theta^s_i-\Theta^l_i}-d R_{ilkt}\wedge\theta^l_j+R_{ilkt}\pa{\theta^l_s\wedge\theta^s_j-\Theta^l_j}\\ \nonumber &-d R_{ijlt}\wedge\theta^l_i+R_{ijlt}\pa{\theta^l_s\wedge\theta^s_k-\Theta^l_k}-d R_{ijkl}\wedge\theta^l_i+R_{ijkl}\pa{\theta^l_s\wedge\theta^s_t-\Theta^l_t}.
 \end{align}
 Now we repeatedly use \eqref{secondCovDerivRiemComp} and \eqref{1_def_forme_curvatura} into the previous relation; after some manipulations we arrive at
 \begin{align*}
   \pa{d R_{ijkt, s}-R_{ljkt, s}\theta^l_i-R_{ilkt, s}\theta^l_j-R_{ijlt, s}\theta^l_k-R_{ijkl, s}\theta^l_t-R_{ijkt, l}\theta^l_s}\wedge\theta^s &= -\frac 12\left(R_{ljkt}R_{lirs}+R_{ilkt}R_{ljrs}\right. \\ \nonumber &\left.+R_{ijlt}R_{lkrs}+R_{ijkl}R_{ltrs}\right)\theta^r\wedge\theta^s.
 \end{align*}
 Renaming indexes and skew-symmetrizing the left hand side, which is precisely $R_{ijkt, sr}\theta^r\wedge\theta^s$, we obtain \eqref{SecondDerivRiem}.
\end{proof}

As a consequence for the Ricci tensor we have
\begin{lemma}\label{LemmaFSTDerivRicci}
\begin{align}
  &R_{ij, k}-R_{ik, j} = -R_{tijk, t} = R_{tikj, t}; \\ &R_{ij, kt}-R_{ij, tk}=R_{likt}R_{lj}+R_{ljkt}R_{li}.
\end{align}
\end{lemma}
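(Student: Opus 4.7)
The first identity is obtained by contracting the second Bianchi identity \eqref{SecondBianchiRiem}. Write it with fresh dummy letters as $R_{ijkl, m}+R_{ijmk, l}+R_{ijlm, k}=0$, set $m=i$ and sum. The term $R_{ijkl,i}$ is exactly $R_{tijk,t}$ after renaming $i\to t$ and relabeling free indices, while the remaining two terms, once we contract one of the first two indices of each $R_{\cdot\cdot\cdot\cdot}$ with a repeated index and use the definition $R_{ij}=R_{itjt}$ together with the symmetries \eqref{FirstBianchiRiem} and $R_{ijkt}=-R_{jikt}=-R_{ijtk}$, reduce to $R_{jk,t}-R_{tj,k}$ (up to renaming). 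After tidying one gets $R_{tijk,t}=R_{ik,j}-R_{ij,k}$, i.e.\ $R_{ij,k}-R_{ik,j}=-R_{tijk,t}$. The second equality $-R_{tijk,t}=R_{tikj,t}$ is then immediate from the antisymmetry $R_{tijk}=-R_{tikj}$ in the last two indices.

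For the second identity, the cleanest route is to first establish the Ricci identity for the covariant derivatives of a general $(0,2)$-tensor $T_{ij}$, following line by line the same moving-frame argument used in Lemma \ref{LemmaCommRulesFunctions} for $f_{ij}$. Namely, starting from
\[
T_{ij,k}\theta^k=dT_{ij}-T_{lj}\theta^l_i-T_{il}\theta^l_j,\qquad T_{ij,kl}\theta^l=dT_{ij,k}-T_{lj,k}\theta^l_i-T_{il,k}\theta^l_j-T_{ij,l}\theta^l_k,
\]
differentiate the first relation, use the first structure equation \eqref{1_firstStructureEq} and the expansion $\Theta^l_m=\tfrac12 R^l_{mrs}\,\theta^r\wedge\theta^s$ to bring in curvature terms, substitute back, and then skew-symmetrize in the last two indices. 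After simplification one obtains
\begin{equation*}
T_{ij,kl}-T_{ij,lk}=T_{mj}R_{mikl}+T_{im}R_{mjkl}.
\end{equation*}
The computation is entirely parallel to the one already carried out for $f_{ij}$ in the proof of Lemma \ref{LemmaCommRulesFunctions} and for $R_{ijkt}$ in the proof of Lemma \ref{LemmaSTRiemann}; only the number of ``correction'' terms $T_{\cdot\cdot}\theta^\cdot_\cdot$ in the definitions changes.

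Applying this identity with $T_{ij}=R_{ij}$ (which is symmetric, so $R_{im}=R_{mi}$) and renaming the dummy $m\to l$ and $l\to t$ yields
\[
R_{ij,kt}-R_{ij,tk}=R_{lj}R_{likt}+R_{li}R_{ljkt},
\]
which is precisely the claim. The only step that requires any care is bookkeeping of signs in the commutator computation; everything else is automatic from the structure equations and the symmetries of Riemann. As a sanity check one can also derive the formula by contracting the indices $j$ and $t$ in Lemma \ref{LemmaSTRiemann} and showing, via the Bianchi and pair-exchange symmetries, that the four quadratic curvature terms collapse to the two stated.
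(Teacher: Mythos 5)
Your argument is correct. For the first identity you do exactly what the paper does: contract the second Bianchi identity \eqref{SecondBianchiRiem} and use the pair antisymmetries together with the convention $R_{ij}=R_{titj}$ (your intermediate line ``reduce to $R_{jk,t}-R_{tj,k}$'' is not index-consistent as written, but the final displayed conclusion is right, so this is only a bookkeeping slip). For the second identity your primary route differs from the paper's: the paper simply traces the already-established commutation formula \eqref{SecondDerivRiem} for the Riemann tensor over the indices $j$ and $t$, whereas you derive from scratch the general Ricci identity $T_{ij,kl}-T_{ij,lk}=T_{mj}R_{mikl}+T_{im}R_{mjkl}$ for an arbitrary $(0,2)$-tensor by repeating the moving-frame computation, and then specialize to $T=\ricc$. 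Both are valid, and your sign conventions match the paper's (compare with $f_{ijk}-f_{ikj}=f_tR_{tijk}$ and with each slot's contribution in \eqref{SecondDerivRiem}). The trade-off is mild: your route produces a reusable general identity at the cost of redoing a structure-equation computation, while the paper's tracing argument is shorter but hides a small subtlety, namely that two of the four quadratic terms in \eqref{SecondDerivRiem} combine into an expression symmetric in the contracted pair and therefore vanish against the antisymmetric factor $R_{sjlr}$ (using the first Bianchi identity); your closing ``sanity check'' remark correctly anticipates exactly this cancellation, so nothing is missing.
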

\begin{proof}
  The previous relations follow tracing equations \eqref{SecondBianchiRiem} and \eqref{SecondDerivRiem}, respectively.
\end{proof}

The First Bianchi Identities  imply that
\begin{equation}\label{PermutCiclCotton}
  C_{ijk}+C_{jki}+C_{kij}=0.
\end{equation}
From the definition of the Cotton tensor we also deduce that
\begin{equation}
  C_{ijk, t} = A_{ij,kt}-A_{ik, jt}=R_{ij, kt}-R_{ik,
  jt}-\frac{1}{2(m-1)}\pa{S_{kt}\delta_{ij}-S_{jt}\delta_{ik}}.
\end{equation}
On the other hand, by Lemma \ref{LemmaFSTDerivRicci} and Schur's
identity $S_i = \frac 12 R_{ik, k}$,
\begin{equation}
  R_{ik, jk} = R_{ik, kj}+R_{tijk}R_{tk}+R_{tkjk}R_{ti}=\frac 12
  S_{ij}-R_{tk}R_{itjk}+R_{it}R_{tj}.
\end{equation}
This enables us obtain the following expression for the divergence
of the Cotton tensor:
\begin{equation}\label{DiverCotton}
  C_{ijk, k}= R_{ij, kk}-\frac{m-2}{2(m-1)} S_{ij}+R_{tk}R_{itjk}-R_{it}R_{tj}-\frac{1}{2(m-1)}\Delta S\delta_{ij}.
\end{equation}
The previous relation also shows that
\begin{equation}\label{SymmDivCotton}
C_{ijk, k}=C_{jik, k},
\end{equation}
thus confirming the symmetry of the Bach tensor, see \eqref{def_Bach_comp}.

Taking the covariant derivative of \eqref{PermutCiclCotton} and
using \eqref{SymmDivCotton} we  also deduce
\begin{equation}\label{NullDiverCotton}
  C_{kij, k}=0.
\end{equation}

\

\section{Gradient Einstein-type manifolds with $\alpha=0$}
\label{sec_4}


In this section we will prove Theorem \ref{TH_alfa0_quasiYamabe} and Corollary \ref{COR_alfa0_quasiYamabe} focusing our attention on gradient Einstein-type manifolds with $\alpha=0$. Without loss of generality, we can write the equation in the form
\begin{equation}\label{a}
\hess\pa{f} +\mu \,df\otimes df\, = \, \varphi\, g \,,
\end{equation}
for some $\mu\in\mathds{R}$ and some function  $\varphi\in \cinf$.
Tracing this equation with the metric $g$, we see immediately that
the function $\varphi$ coincides with $(\Delta f+\mu |\nabla
f|^{2})/m$. We prove the following result, which immediately implies Theorem \ref{TH_alfa0_quasiYamabe} and Corollary \ref{COR_alfa0_quasiYamabe}.

\begin{theorem}\label{teo1} Let $\varrg$ be a complete gradient Einstein-type manifold of dimension $m\geq 3$ and of the form~\eqref{a}. Then, any regular level set $\Sigma$ of $f$ admits a maximal open neighborhood $U\subset M^m$ on which $f$ only depends on the signed distance $r$ to the hypersurface $\Sigma$. In addition, the potential function $f$ can be chosen in such a way that the metric $g$ takes the form
\begin{equation}\label{warped metric}
g \, = \, dr \otimes dr \,+ \,\left(\frac{f'(r)}{f'(0)}e^{\mu f(r)}\right)^{2}\, g^{\Sigma} \quad {\hbox{on $U$}} ,
\end{equation}
where $g^{\Sigma}$ is the metric induced by $g$ on $\Sigma$. As a consequence, $f$ has at most two critical points on $M^m$ and we have the following cases:
\begin{itemize}
\item[(1)] If $f$ has no critical points, then $\varrg$ is globally conformally equivalent to a direct product $I\times N^{m-1}$ of some interval $I=(t_{*},t^{*})\subseteq \mathbb{R}$ with a $(m-1)$-dimensional complete Riemannian manifold $(N^{m-1},g^{N})$. More precisely, the metric takes the form
$$
g \, = \, u^{2}(t)\, \big(dt^{2}+g^{N}\big) \, ,
$$
where $u:(t_{*},t^{*})\rightarrow \mathbb{R}$ is some positive smooth function.
\smallskip
\item[(1')] If, in addition, the Ricci tensor of $\varrg$ is nonnegative, then $\varrg$ is {\em isometric} to a
direct product $\mathbb{R}\times N^{m-1}$, where $(N^{m-1},g^N)$ has nonnegative Ricci tensor.
\smallskip

\item[(2)] If $f$ has only one critical point $O\in M^m$, then $\varrg$ is globally conformally equivalent to the interior of a Euclidean ball of radius $t^{*}\in(0,+\infty]$. More precisely, on $M^{m}\setminus~\{O\}$, the metric takes the form
$$
g \, = \, v^{2}(t)\, \big(dt^{2}+t^{2}g^{\mathbb{S}^{m-1}}\big) \,,
$$
where $v:(0,t^{*})\rightarrow \mathbb{R}$ is some positive smooth function. In particular $\varrg$ is complete, noncompact and rotationally symmetric.
\smallskip
\item[(2')] If, in addition, the Ricci tensor of $\varrg$ is nonnegative, then $\varrg$ is globally conformally equivalent to $\mathbb{R}^{m}$.

\smallskip

\item[(3)] If the function $f$ has two critical points $N,S \in M^m$, then $\varrg$ is globally conformally equivalent to $\mathbb{S}^{m}$.  More precisely, on $M^{m}\setminus \{N,S\}$, the metric takes the form
$$
g \, = \, w^{2}(t)\, \big(dt^{2}+\sin^{2}(t)\,g^{\mathbb{S}^{m-1}}\big) \,,
$$
where $w:(0,\pi)\rightarrow \mathbb{R}$ is some smooth positive function. In particular $\varrg$ is compact and rotationally symmetric.
\end{itemize}
\end{theorem}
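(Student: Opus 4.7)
The strategy is to reduce equation \eqref{a} to a classical Tashiro-type conformal-gradient equation and then invoke standard rigidity results. Concretely, I would set $u=\mu^{-1}e^{\mu f}$ when $\mu\neq 0$ (and $u=f$ when $\mu=0$). A straightforward computation gives $du=e^{\mu f}\,df$ and
\[
\hess(u)=e^{\mu f}\bigl(\hess(f)+\mu\,df\otimes df\bigr)=\varphi\, e^{\mu f}\, g =: \tilde{\varphi}\, g,
\]
so that $u$ satisfies the conformal-gradient equation $\hess(u)=\tilde{\varphi}\, g$ on the complete manifold $\varrg$. Since $e^{\mu f}>0$, the critical set of $u$ coincides with that of $f$, and the regular level sets of $u$ and of $f$ are the same.

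Next I would apply the classical structure theorem of Tashiro for functions with Hessian proportional to the metric. Away from the critical set, the integral curves of $\nabla u/|\nabla u|$ are unit-speed geodesics orthogonal to the level sets of $u$, these level sets are totally umbilic, and $u$ depends only on the signed distance $r$ to any chosen regular level set $\Sigma$. The ambient metric then takes the warped-product form
\[
g=dr\otimes dr+\bigl(u'(r)/u'(0)\bigr)^{2}\,g^{\Sigma}
\]
on a tubular neighbourhood of $\Sigma$. Substituting $u'(r)=e^{\mu f(r)}f'(r)$ and normalizing the additive constant in $f$ so that $f(0)=0$ on $\Sigma$, one recovers exactly \eqref{warped metric}. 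The global picture is then obtained by extending this representation along the flow of $\nabla u/|\nabla u|$ to the maximal interval on which it remains smooth, and classifying the endpoints: the absence of critical points of $f$ yields the product cylinder of case~(1); a single critical point forces the warping factor to degenerate so as to close the metric smoothly into a point, giving the rotationally symmetric ball of case~(2); two critical points produce the sphere-like compact model of case~(3).

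For the Ricci-nonnegativity refinements (1') and (2'), I would upgrade the conformal classification to an isometric statement. In case (1'), completeness together with the absence of critical points allows one to produce a complete geodesic line by flowing along $\nabla u/|\nabla u|$ in both directions; the Cheeger--Gromoll splitting theorem then promotes the conformal product into an isometric decomposition $\mathbb{R}\times N^{m-1}$, with $\ricc^{N}\geq 0$ inherited from the ambient bound. In case (2'), the warped-product formula for the Ricci tensor combined with $\ricc\geq 0$ rules out a finite outer radius $t^{*}$, forcing the radial interval to be $(0,+\infty)$ and identifying $\varrg$ with a global conformal deformation of $\mathbb{R}^{m}$. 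I expect the main technical obstacle to be the careful analysis at the critical points of $f$: there the warped metric must close up smoothly into a rotationally symmetric model, which requires controlling the asymptotic behaviour of $f'(r)$ and $\tilde{\varphi}$ at the endpoints of the radial interval; this is the delicate core of Tashiro's classification and is what drives the dichotomy between the three cases.
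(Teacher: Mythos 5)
Your proposal is correct and arrives at the same warped--product normal form, but by a genuinely different reduction. You substitute $u=\mu^{-1}e^{\mu f}$ (resp.\ $u=f$ if $\mu=0$), note that $\hess(u)=e^{\mu f}\bigl(\hess(f)+\mu\,df\otimes df\bigr)=\tilde{\varphi}\,g$, and thereby transplant the whole problem into the classical conformal--gradient (Tashiro) setting, after which the identity $u'(r)=e^{\mu f(r)}f'(r)$ hands you the warping factor in \eqref{warped metric} essentially for free. The paper instead works directly with $f$: it first shows that $\abs{\nabla f}$ is constant on each regular level set by contracting \eqref{a} with tangent vectors, writes $g$ in Fermi coordinates adapted to $\Sigma$, derives the ODE $\bigl[f''+\mu(f')^{2}\bigr]g_{ij}=\tfrac{1}{2}f'\,\partial_r g_{ij}$ and integrates it, and only then invokes \cite[Theorem 1.1]{catmantmazz} for the endpoint analysis and the trichotomy (1)--(3). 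Your substitution buys a shorter derivation of \eqref{warped metric} and makes the logical dependence on the conformal--gradient classification completely transparent; the paper's computation buys self-containedness up to the point where the critical points must be analyzed, a step which both arguments ultimately delegate to \cite{catmantmazz}. Two small points on your refinements: for (1') the standard route is to observe that $\ricc(\partial_r,\partial_r)=-(m-1)\,\omega''/\omega\geq 0$ forces the positive warping function $\omega$ to be concave on all of $\erre$, hence constant, so the product is isometric; your Cheeger--Gromoll argument also works, since the radial curves of a complete warped product over $\erre$ are minimizing and hence lines, but it uses heavier machinery and you should justify explicitly that completeness together with the absence of critical points makes the signed--distance parameter range over all of $\erre$ (the normalized gradient flow lines are unit-speed geodesics, so they extend for all time and can only stop at critical points of $f$).
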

\begin{proof}
We will follow the proof in~\cite{catmantmazz}, using the Koszul
formalism. Let $\Sigma$ be a regular level set of the function
$f:M^m\to\mathds{R}$, i.e. $|\nabla f|\neq 0$ on $\Sigma$, which
exists by Sard's Theorem and the fact that $f$ is nonconstant in our
definition. First we observe that $|\nabla f|$ has to be constant on
$\Sigma$. Indeed, for all $Y\in T_{p}\Sigma$,
$$
\nabla_Y |\nabla f|^{2} \,=\, 2 \,\hess(f) (\nabla f, Y) =
\frac{2\,\Delta f+2\mu|\nabla f|^{2}}{m} \,g(\nabla f,Y) - 2\mu\,|\nabla f|^{2}\,g(\nabla f,Y) \, = \, 0\,.
$$
From this we deduce that, in a neighborhood $U$ of $\Sigma$ which does
not contain any critical point of $f$, the potential function $f$ only depends
on the signed distance $r$ to the hypersurface $\Sigma$. In particular
$df=f' dr$. Moreover, if $\theta=(\theta^{1}\,\ldots,\theta^{m-1})$ are coordinates {\em adapted} to the hypersurface $\Sigma$, we get
$$
\hess(f) \,=\, \nabla df \,=\, f'' dr\otimes dr + f' \hess(r)=\, f'' dr\otimes dr + \frac{f'}{2} \, \partial_r g_{ij} \,d\theta^i\otimes d\theta^j\,,
$$
since
$$
\Gamma_{rr}^r=\Gamma_{rr}^k=\Gamma_{ir}^r=0\,,\qquad
\Gamma_{ij}^r=- \frac{1}{2} \,\partial_r g_{ij}\,,\qquad
\Gamma_{ir}^k= \frac{1}{2} \, g^{ks}\partial_r g_{is}\,.
$$
On the other hand, using equation \eqref{a}, we have
$$
\hess(f) \, = \, \frac{\Delta f+\mu\,|\nabla f|^{2}}{m}\, g - \mu\, df\otimes df\,= \, \pa{\frac{\Delta
  f+\mu (f')^{2}}{m}-\mu (f')^{2}} \,dr \otimes dr + \pa{\frac{\Delta
  f+\mu (f')^{2}}{m}} g_{ij}\, d\theta^{i} \otimes
d\theta^{j}\,,
$$
thus,
$$
\frac{\Delta f + \mu (f')^{2}}{m}= f''+ \mu (f')^{2} \quad\quad\hbox{and}\quad \quad \frac{\Delta f + \mu (f')^{2}}{m}\, g_{ij} = \frac{1}{2} f' \,\partial_{r}
g_{ij} \,.
$$
These equations imply the family of ODE's
$$
\big[f''(r) +\mu (f')^{2} \big]  \,g_{ij}(r,\theta)  \,=\, \frac{f'(r)}{2} \,\partial_{r}
g_{ij}(r,\theta)\,.
$$
Since $f'(0)\not=0$ (otherwise $\Sigma$ is not a regular level set of $f$) we can integrate these equations obtaining
$$
g_{ij}(r,\theta) \, = \, \left(\frac{f'(r)}{f'(0)}e^{\mu [f(r)-f(0)]}\right)^{2} g_{ij}(0,\theta)\,.
$$
Therefore, in $U$ the metric takes the form
$$
g \, = \, dr \otimes dr \,+ \,\left(\frac{f'(r)}{f'(0)}e^{\mu [f(r)-f(0)]}\right)^{2}\, g^{\Sigma}_{ij}(\theta)\,d\theta^{i} \otimes d\theta^{j}\,,
$$
where $g^{\Sigma}_{ij}(\theta)=g_{ij}(0,\theta)$ is the metric induced by $g$ on $\Sigma$. We notice that, since $f=f(r)$, then the width of the neighborhood $U$ is uniform with respect to the points of $\Sigma$, namely we can assume $U=\{r_{*}<r<r^{*}\}$, for some maximal $r_{*}\in[-\infty,0)$ and $r^{*}\in(0,\infty]$. Moreover, by translating the function $f$, we can assume that $f(0)=0$. Hence, in $U$, the metric can be written as
\begin{equation}\label{metric}
g \, = \, dr \otimes dr \,+ \,\left(\frac{f'(r)}{f'(0)}e^{\mu f(r)}\right)^{2}\, g^{\Sigma}\,,
\end{equation}
where $g^{\Sigma}$ denotes the induced metric on the level set $\Sigma$. Then, if we let
$$
\omega(r):= \frac{f'(r)}{f'(0)}e^{\mu f(r)}\,,
$$
we have that $g= dr \otimes dr \,+ \, \omega(r)^{2} g^{\Sigma}$. At this point, we can follow directly the computations in the proof of~\cite[Theorem 1.1]{catmantmazz}. In fact, one observes that we have three possible cases, depending on the zeros of the function $\omega$, i.e. depending on the number of critical points of the function $f$. Now, to conclude the proof of the theorem one can follow step by step the proof in ~\cite{catmantmazz}.
%
%
\end{proof}


\

\section{The tensor $D$ and the integrability conditions}
\label{sec_5}

The main result of this section concerns two natural integrability
conditions that follow directly from the Einstein-type structure; as
in the case of Ricci solitons and Yamabe (quasi)-solitons, there is a
natural tensor that turns out to play a fundamental role in relating
the Einstein-type structure to the geometry of the underlying
manifold. Quite surprisingly, as it is shown in Theorem
\ref{TH_integrabilityConditions}, the presence of the constant $\rho$ and of the function $\lambda$
seems to be completely irrelevant.

Let $\varrg$ be gradient Einstein-type manifold of dimension $m\geq 3$. Equation \eqref{Eq_ETS_global} in components reads as
\begin{equation}\label{Eq_ETS_components}
  \alpha R_{ij} +\beta f_{ij}+\mu f_if_j = \pa{\rho S+\lambda}\delta_{ij}.
\end{equation}
Tracing the previous relation we immediately deduce that
\begin{equation}\label{Eq_ETS_global_traced}
  \pa{\alpha-m\rho}S +\beta\Delta f +\mu\abs{\nabla f}^2 = m\lambda.
\end{equation}

\begin{defi}
We define the tensor $D$ by its components
\begin{align}\label{Definition_of_D}
   D_{ijk}=\frac{1}{m-2}\pa{f_kR_{ij}-f_jR_{ik}}+\frac{1}{(m-1)(m-2)}f_t\pa{R_{tk}\delta_{ij}-R_{tj}\delta_{ik}}-\frac{S}{(m-1)(m-2)}\pa{f_k\delta_{ij}-f_j \delta_{ik}}.
 \end{align}
  \end{defi}

  Note that $D$ is skew-symmetric in the second and third indices (i.e. $D_{ijk}=-D_{ikj}$) and totally trace-free (i.e. $D_{iik}=D_{iki}=D_{kii}=0$).
 \begin{rem}
   We explicitly note that our conventions for the Cotton tensor and for the tensor $D$ differ from those in \cite{CaoChen}.
 \end{rem}

 \begin{lemma}
 Let $\varrg$ be a gradient Einstein-type manifold of dimension $m\geq 3$. The tensor $D$ can be written in the next three equivalent ways:
    \begin{align}
   D_{ijk} &= \frac{1}{m-2}\pa{f_kR_{ij}-f_jR_{ik}}+\frac{1}{(m-1)(m-2)}f_t\pa{R_{tk}\delta_{ij}-R_{tj}\delta_{ik}}-\frac{S}{(m-1)(m-2)}\pa{f_k\delta_{ij}-f_j \delta_{ik}} \\
   \nonumber &= \frac{1}{m-2}\pa{f_kA_{ij}-f_jA_{ik}} +\frac{1}{(m-1)(m-2)}f_t\pa{E_{tk}\delta_{ij}-E_{tj}\delta_{ik}}\\
   \nonumber  &=\frac{\beta}{\alpha}\sq{\frac{1}{m-2}\pa{f_jf_{ik}-f_kf_{ij}}+\frac{1}{(m-1)(m-2)}f_t\pa{f_{tj}\delta_{ik}-f_{tk}\delta_{ij}}-\frac{\Delta f}{(m-1)(m-2)}\pa{f_j\delta_{ik}-f_k
   \delta_{ij}}},
 \end{align}
where $E_{ij}$ are the components of the \emph{ Einstein tensor}
(see \cite{Besse}) defined as
 \begin{equation*}\label{def_Einstein_comp}
  E_{ij} = R_{ij} - \frac{S}{2}\delta_{ij}.
 \end{equation*}
 \end{lemma}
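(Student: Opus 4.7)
The first expression for $D_{ijk}$ is just the definition, so nothing needs to be proved. The core of the lemma lies in the second and third identities, both of which are obtained by straightforward algebraic substitutions.

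For the second identity, I would use the definitions of the Schouten and Einstein tensors to write $R_{ij}=A_{ij}+\frac{S}{2(m-1)}\delta_{ij}$ and $R_{tk}=E_{tk}+\frac{S}{2}\delta_{tk}$, and substitute these into the three blocks of the defining formula. The first block $\frac{1}{m-2}(f_kR_{ij}-f_jR_{ik})$ generates a term $\frac{S}{2(m-1)(m-2)}(f_k\delta_{ij}-f_j\delta_{ik})$; the second block $\frac{1}{(m-1)(m-2)}f_t(R_{tk}\delta_{ij}-R_{tj}\delta_{ik})$ generates another $\frac{S}{2(m-1)(m-2)}(f_k\delta_{ij}-f_j\delta_{ik})$ (using $f_t\delta_{tk}=f_k$); and the third block contributes $-\frac{S}{(m-1)(m-2)}(f_k\delta_{ij}-f_j\delta_{ik})$. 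The three $S$-terms add up to zero, which yields the claimed expression in terms of $A$ and $E$.

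For the third identity, the idea is to replace Ricci using the structural equation \eqref{Eq_ETS_components} (this is where $\alpha\neq 0$ is implicitly used), namely $\alpha R_{ij}=-\beta f_{ij}-\mu f_if_j+(\rho S+\lambda)\delta_{ij}$. Plugging this into $\alpha D_{ijk}$, the pure $f_if_jf_k$ contributions from $\mu f_if_j$ cancel by skew-symmetry in $j,k$, leaving Hessian terms of the desired shape plus a residual proportional to $(f_k\delta_{ij}-f_j\delta_{ik})$ with coefficient
\[
\frac{\rho S+\lambda}{m-2}+\frac{\rho S+\lambda-\mu|\nabla f|^2}{(m-1)(m-2)}-\frac{\alpha S}{(m-1)(m-2)}.
\]
At this stage the crucial step is to eliminate $\alpha S$ via the traced equation \eqref{Eq_ETS_global_traced}, which gives $\alpha S=m\rho S+m\lambda-\beta\Delta f-\mu|\nabla f|^2$. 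A short arithmetic check then shows that all $\rho S$, $\lambda$ and $\mu|\nabla f|^2$ contributions cancel exactly and one is left with the single term $\frac{\beta\Delta f}{(m-1)(m-2)}(f_k\delta_{ij}-f_j\delta_{ik})$, which together with the Hessian terms produces the stated formula after dividing by $\alpha$.

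The only mildly delicate point is keeping track of signs and of the index pairs $(j,k)$ vs.\ $(k,j)$, since the tensor $D$ is skew in the last two indices; beyond this bookkeeping, there is no real obstacle. Note that the disappearance of $\rho$ and $\lambda$ from the final expression, though somewhat surprising, is forced by the cancellation above, and is the reason why the same tensor $D$ will govern all the subgeometries listed in the introduction.
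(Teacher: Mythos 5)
Your computation is correct and follows exactly the route the paper indicates (it states only that the lemma is ``a simple computation'' using the definitions of the tensors, equation \eqref{Eq_ETS_components} and the traced equation \eqref{Eq_ETS_global_traced}, which are precisely the three ingredients you use). The cancellation you verify, leaving only the $\beta\,\Delta f$ term in the residual coefficient, checks out.
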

 Note that the third expression makes sense only if $\alpha\neq 0$. The proof is just a simple computation, using the definitions of the tensors involved, equation \eqref{Eq_ETS_components} and equation \eqref{Eq_ETS_global_traced}.

 The following theorem should be compared with Lemma 3.1 and equation (4.1) in \cite{CaoChen}, with Lemma 2.4 and equation (2.12) in \cite{CaoCatinoChenMantMazz} and with Proposition 2.2 in \cite{HuangLi_quasiYamabe}. This result highlights the geometric relevance of $D$ in this general situation.
\begin{theorem}\label{TH_integrabilityConditions}
Let $\varrg$ be a gradient Einstein-type manifold with $\beta\neq 0$ of dimension
$m\geq 3$.  Then the following integrability conditions hold:
   \begin{align}
     &\alpha C_{ijk}+\beta f_t W_{tijk} = \sq{\beta-\frac{(m-2)\alpha\mu}{\beta}}D_{ijk}, \label{firstGeneralIntCond}\\     &\alpha B_{ij} = \frac{1}{m-2}\set{\sq{\beta-\frac{(m-2)\alpha\mu}{\beta}}D_{ijk, k}+\beta\pa{\frac{m-3}{m-2}}f_tC_{jit}-\mu f_tf_kW_{itjk}}. \label{secondGeneralIntCond}
  \end{align}
\end{theorem}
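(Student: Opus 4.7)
The plan is to obtain both identities as successive consequences of differentiating the defining equation \eqref{Eq_ETS_components}. The first identity \eqref{firstGeneralIntCond} will come from one covariant derivative and a skew-symmetrization; \eqref{secondGeneralIntCond} will then follow by taking a further divergence in the free index $k$ of \eqref{firstGeneralIntCond}.

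For the first condition I would differentiate \eqref{Eq_ETS_components} in $x^k$ and skew-symmetrize the result in the pair $(j,k)$, getting
\[
\alpha(R_{ij,k}-R_{ik,j})+\beta(f_{ijk}-f_{ikj})+\mu(f_{ik}f_j-f_{ij}f_k)=(\rho S_k+\lambda_k)\delta_{ij}-(\rho S_j+\lambda_j)\delta_{ik}.
\]
The Cotton tensor yields $R_{ij,k}-R_{ik,j}=C_{ijk}+\tfrac{1}{2(m-1)}(S_k\delta_{ij}-S_j\delta_{ik})$, while the commutation rule \eqref{ThirdDerivFunctionRiem} together with the decomposition \eqref{Riemann_Weyl} turns $\beta(f_{ijk}-f_{ikj})=\beta f_tR_{tijk}$ into $\beta f_tW_{tijk}$ plus the Ricci/scalar-curvature lower-order pieces that assemble into part of $D_{ijk}$. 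Since $\beta\neq 0$, the main equation itself gives $\beta(f_{ik}f_j-f_{ij}f_k)=(\rho S+\lambda)(f_j\delta_{ik}-f_k\delta_{ij})-\alpha(f_jR_{ik}-f_kR_{ij})$, which contributes the factor $-\tfrac{(m-2)\alpha\mu}{\beta}$ in front of the first summand of $D_{ijk}$ and makes the $(\rho S+\lambda)f_k$ remainder appear.

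At this intermediate stage the only obstructions to reading off \eqref{firstGeneralIntCond} are the scalar derivatives $S_k,\lambda_k$ on the right and certain residual $f_k\delta_{ij}$-type terms. To eliminate them I would contract the skew-symmetrized identity with $\delta_{ij}$; using $C_{iik}=0$ and the Ricci trace $\sum_i R_{tiik}=-R_{tk}$, together with $f_{ik}f_i=\tfrac{1}{\beta}[(\rho S+\lambda)f_k-\alpha f_tR_{tk}-\mu|\nabla f|^2 f_k]$ (coming from contracting \eqref{Eq_ETS_components} with $f_i$) and the trace \eqref{Eq_ETS_global_traced} to get rid of $\mu|\nabla f|^2$, one expresses $(\rho-\tfrac{\alpha}{2(m-1)})S_k+\lambda_k$ as a pure combination of $f_tR_{tk}$, $Sf_k$ and $(\rho S+\lambda)f_k$. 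Substituting this back, every derivative of $S$ and $\lambda$ cancels, and the leftover $\delta_{ij},\delta_{ik}$ contributions glue with the earlier remainders into precisely the remaining summands of $D_{ijk}$, with the coefficient $\beta-\tfrac{(m-2)\alpha\mu}{\beta}$ emerging naturally.

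For \eqref{secondGeneralIntCond} I would differentiate \eqref{firstGeneralIntCond} in $x^k$ and contract in $k$. On the left, \eqref{def_Bach_comp} combined with the symmetry \eqref{SymmDivCotton} rewrites $\alpha C_{ijk,k}=\alpha C_{jik,k}=(m-2)\alpha B_{ij}-\alpha R_{kt}W_{ikjt}$, while \eqref{def_Cotton_comp_Weyl}, after using the pair-swap and antisymmetry of $W$, rewrites $\beta f_tW_{tijk,k}$ as $\beta\tfrac{m-3}{m-2}f_tC_{jti}=-\beta\tfrac{m-3}{m-2}f_tC_{jit}$. The remaining term $\beta f_{tk}W_{tijk}$ is handled by substituting $\beta f_{tk}=(\rho S+\lambda)\delta_{tk}-\alpha R_{tk}-\mu f_tf_k$ from the main equation: the first contribution vanishes by the trace-freeness of $W$, and the piece $-\alpha R_{tk}W_{tijk}$, after exchanging the dummy indices $t\leftrightarrow k$ and applying the antisymmetry $W_{kijt}=-W_{ikjt}$, equals $\alpha R_{kt}W_{ikjt}$ and hence cancels the Ricci-Weyl remainder coming from $\alpha C_{ijk,k}$. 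A similar index juggling turns $-\mu f_tf_kW_{tijk}$ into $-\mu f_tf_kW_{itjk}$. Collecting the surviving terms and dividing by $m-2$ produces \eqref{secondGeneralIntCond}.

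The real work is concentrated in the first identity: before performing the two uses of the main equation and the trace \eqref{Eq_ETS_global_traced}, it is not at all evident that $\rho$ and $\lambda$ will disappear entirely, nor that the precise coefficient $\beta-\tfrac{(m-2)\alpha\mu}{\beta}$ (which is exactly what distinguishes nondegenerate from degenerate Einstein-type structures) should emerge. Once this cancellation is established, the second identity reduces to a careful but essentially algebraic manipulation of the Weyl symmetries relating $W_{tijk,k}$, $R_{tk}W_{tijk}$ and $R_{kt}W_{ikjt}$.
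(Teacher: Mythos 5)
Your proposal is correct and follows essentially the same route as the paper: differentiate \eqref{Eq_ETS_components}, skew-symmetrize in $(j,k)$, eliminate $S_k$ and $\lambda_k$ by tracing the resulting identity and substituting $f_{tk}$ and $\Delta f$ from the structure equation and its trace (the paper's equation \eqref{EQ_Sk}), then use the Weyl/Ricci decomposition to assemble $D_{ijk}$; the second identity is likewise obtained by taking the divergence of the first and invoking \eqref{def_Bach_comp}, \eqref{def_Cotton_comp_Weyl}, \eqref{EQ_ftk} and the trace-freeness and antisymmetries of $W$. The only caveat is a harmless sign slip in your last paragraph ($-\mu f_tf_kW_{tijk}=+\mu f_tf_kW_{itjk}$, which becomes $-\mu f_tf_kW_{itjk}$ only after moving it to the other side), which does not affect the validity of the argument.
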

\begin{proof}
  We begin with the covariant derivative of equation \eqref{Eq_ETS_components} to get
  \begin{equation}
    \alpha R_{ij, k} +\beta f_{ij, k}+\mu \pa{f_{ik}f_j+f_if_{jk}} = \pa{\rho S_k+\lambda_k}\delta_{ij}.
  \end{equation}
  Skew-symmetrizing with respect to $j$ and $k$ and using \eqref{ThirdDerivFunctionRiem} we obtain
  \begin{equation}\label{EQ_FirstCondSkewSymm}
  \alpha\pa{R_{ij, k}-R_{ik, j}}+\beta f_tR_{tijk}+\mu\pa{f_{ik}f_j-f_{ij}f_k}=\rho\pa{S_k\delta_{ij}-S_j\delta_{ik}}+\pa{\lambda_k\delta_{ij}-\lambda_j\delta_{ik}}.
  \end{equation}
  To get rid of the two terms on the right-hand side of equation \eqref{EQ_FirstCondSkewSymm} we proceed as follows:
  first we trace the equation with respect to $i$ and $j$ and we use  Schur's identity $S_k=2R_{tk,
  t}$ to deduce
  \begin{equation}\label{EQ_SkFirst}
  \sq{\alpha-2\rho(m-1)}S_k = 2\beta f_tR_{tk}+2(m-1)\lambda_k-2\mu\pa{f_tf_{tk}-\Delta f f_k};
  \end{equation}
  secondly, from equations \eqref{Eq_ETS_components} and \eqref{Eq_ETS_global_traced} we respectively have
  \begin{equation}\label{EQ_ftk}
  f_{tk}=\frac{1}{\beta}\sq{\pa{\rho S+\lambda}\delta_{tk}-\alpha R_{tk}-\mu f_tf_k}
  \end{equation}
  and
  \begin{equation}
  \Delta f = \frac{1}{\beta}\sq{\pa{m\rho-\alpha}S+m\lambda-\mu\abs{\nabla f}^2}.
  \end{equation}
Inserting the two previous relations in \eqref{EQ_SkFirst} and simplifying we deduce the following important equation

\begin{equation}\label{EQ_Sk}
 \sq{\alpha-2\rho(m-1)}S_k = 2\pa{\beta+\frac{\alpha\mu}{\beta}} f_tR_{tk}+2(m-1)\lambda_k-\frac{2\mu}{\beta}\sq{\alpha-\rho(m-1)}Sf_k+\frac{2\mu}{\beta}(m-1)\lambda f_k.
\end{equation}
From \eqref{Riemann_Weyl} we deduce that
  \begin{equation}\label{EQ_RiemWeylD}
    f_tR_{tijk} = f_tW_{tijk} - D_{ijk}-\frac{1}{m-1}\pa{f_tR_{tk}\delta_{ij}-f_tR_{tj}\delta_{ik}} .
  \end{equation}
  Inserting now \eqref{EQ_RiemWeylD}, \eqref{def_Cotton_comp} and \eqref{EQ_Sk} into \eqref{EQ_FirstCondSkewSymm} and simplifying we get \eqref{firstGeneralIntCond}.

  Taking the divergence of equation \eqref{firstGeneralIntCond} we obtain
  \begin{equation}
  \alpha C_{ijk, k} -\beta f_{tk}W_{itjk} -\beta\pa{\frac{m-3}{m-2}}f_tC_{jit} = \sq{\beta-\frac{(m-2)\alpha\mu}{\beta}}D_{ijk, k};
  \end{equation}
  using the definition of the Bach tensor \eqref{def_Bach_comp}, equation \eqref{EQ_ftk} and the symmetries of $W$ we immediately deduce \eqref{secondGeneralIntCond}.
\end{proof}
\begin{rem}
  Equation \eqref{EQ_Sk} is the analogue of the fundamental $S_k = 2f_tR_{tk}$, valid for every gradient Ricci
  soliton.
\end{rem}

\begin{rem} \label{RM_betanullo} In case $\beta=0$ (and thus $\alpha\neq 0$), by direct calculations, using \eqref{def_Cotton_comp}, \eqref{Definition_of_D} and \eqref{Eq_ETS_components}, one can show that $D = 0$ and equations \eqref{firstGeneralIntCond} and \eqref{secondGeneralIntCond} take the form
$$
\alpha C_{ijk} = -\mu \pa{f_jf_{ik}-f_kf_{ij}}-\frac{\mu}{m-1}f_t\pa{f_{tj}\delta_{ik}-f_{tk}\delta_{ij}}+\frac{\mu \,\Delta f}{m-1}\pa{f_j\delta_{ik}-f_k \delta_{ij}} ,
$$
$$
\alpha B_{ij} = \frac{1}{m-2}\set{\alpha C_{ijk,k}-\mu f_t f_k W_{itjk}} .
$$
\end{rem}

\

\section{Vanishing of the tensor $D$}
\label{sec_6}

In this section we compute the squared norm of the tensor $D$ in terms of $D$ itself, the Bach tensor $B$ and the potential function $f$.  Moreover, under the assumption of Theorem \ref{TH_Main}, we prove the vanishing of $D$. We begin with

%
%
%
%

\begin{lemma} \label{LMasd}
Let $\varrg$ be a nondegenerate gradient Einstein-type manifold of dimension $m\geq 3$. If $\alpha \neq 0$,
  \begin{equation}\label{EQ_Dsquared_general}
    \pa{\frac{m-2}{2}}\sq{\beta-\frac{(m-2)\alpha\mu}{\beta}}\abs{D}^2 = -\beta(m-2)f_if_jB_{ij}+\frac{\beta}{\alpha}\sq{\beta-\frac{(m-2)\alpha\mu}{\beta}}\pa{f_if_jD_{ijk}}_k,
  \end{equation}
 while if  $\alpha=0$
  \begin{equation}\label{EQ_Dsquared_alpha0}
    \pa{\frac{m-2}{2}}\abs{D}^2 = -(m-2)f_if_jB_{ij}+\pa{f_if_jC_{ijk}}_k.
  \end{equation}
 \end{lemma}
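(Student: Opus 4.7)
The central algebraic identity I would establish first is
\begin{equation*}
|D|^{2} \, = \, \frac{2}{m-2}\,R_{ij}\,f_{k}\,D_{ijk},
\end{equation*}
obtained by contracting one factor of $D_{ijk}$ in $|D|^{2}=D_{ijk}D_{ijk}$ with the defining expression in \eqref{Definition_of_D}. The trace terms of $D$ drop out because $D$ is totally trace-free, while the two remaining contributions $f_{k}R_{ij}$ and $-f_{j}R_{ik}$ coincide after relabelling, thanks to the antisymmetry $D_{ijk}=-D_{ikj}$. Once this identity is in hand, the rest of the proof amounts to rewriting $R_{ij}f_{k}D_{ijk}$ as a divergence plus a Bach-tensor contribution, using the integrability conditions from Theorem \ref{TH_integrabilityConditions}.

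\textbf{The case $\alpha\neq 0$.} I would start from the product rule
\begin{equation*}
(f_{i}f_{j}D_{ijk})_{k} \, = \, f_{ik}f_{j}D_{ijk} \,+\, f_{i}f_{jk}D_{ijk} \,+\, f_{i}f_{j}D_{ijk,k}.
\end{equation*}
The middle term vanishes, since $f_{jk}$ is symmetric and $D_{ijk}$ is antisymmetric in $(j,k)$. For the first term I would substitute $f_{ik}$ from \eqref{EQ_ftk}: the contribution proportional to $\delta_{ik}$ disappears because $D_{iji}=0$, the piece proportional to $f_{i}f_{k}$ disappears by the same symmetry/antisymmetry argument, and what survives is $f_{ik}f_{j}D_{ijk}=\frac{\alpha}{\beta}R_{ij}f_{k}D_{ijk}$ (after using $D_{ijk}=-D_{ikj}$ to match indices). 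Next I would contract the second integrability condition \eqref{secondGeneralIntCond} with $f_{i}f_{j}$: both the $f_{t}C_{jit}$ and the $f_{t}f_{k}W_{itjk}$ terms vanish, because $C$ and $W$ are antisymmetric in the pair of indices against which $f_{i}f_{t}$ is contracted. This yields
\begin{equation*}
(m-2)\alpha \, f_{i}f_{j}B_{ij} \, = \, \Big[\beta-\tfrac{(m-2)\alpha\mu}{\beta}\Big]\, f_{i}f_{j}D_{ijk,k}.
\end{equation*}
Assembling these three equations together with the key identity for $|D|^{2}$ then gives \eqref{EQ_Dsquared_general}.

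\textbf{The case $\alpha=0$.} Here \eqref{firstGeneralIntCond} reduces to $D_{ijk}=f_{t}W_{tijk}$ (dividing by $\beta\neq 0$, which is forced by nondegeneracy together with $\alpha=0$). Substituting this into the key identity and exploiting the symmetries of $W$ to rewrite $f_{t}f_{k}W_{tijk}R_{ij}=-f_{i}f_{j}W_{ikjl}R_{kl}$ gives $|D|^{2}=-\frac{2}{m-2}f_{i}f_{j}W_{ikjl}R_{kl}$. Applying the definition \eqref{def_Bach_comp} of the Bach tensor, this becomes $-2\,f_{i}f_{j}B_{ij}+\frac{2}{m-2}\,f_{i}f_{j}C_{ijk,k}$. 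Finally I would convert $f_{i}f_{j}C_{ijk,k}$ into the divergence $(f_{i}f_{j}C_{ijk})_{k}$ by the product rule; the two extra terms $f_{ik}f_{j}C_{ijk}$ and $f_{i}f_{jk}C_{ijk}$ both vanish, using \eqref{EQ_ftk} with $\alpha=0$, the trace-freeness of $C$, and the antisymmetry of $C$ in its last two indices. This produces \eqref{EQ_Dsquared_alpha0}.

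\textbf{Main obstacle.} The real difficulty here is bookkeeping: one has to choose contractions so that antisymmetries cause all the spurious terms to cancel, and then recognize in the surviving Weyl--Ricci contraction exactly the combination that the Bach tensor prescribes. In particular, the crucial point in the nondegenerate case is that contracting \eqref{secondGeneralIntCond} against $f_{i}f_{j}$ kills both the Cotton and the Weyl cross-terms, forcing $f_{i}f_{j}D_{ijk,k}$ to be a pure multiple of $f_{i}f_{j}B_{ij}$; this is precisely what allows the divergence on the right-hand side of \eqref{EQ_Dsquared_general} to absorb cleanly.
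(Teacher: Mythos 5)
Your proof is correct, and it starts from the same key identity $\abs{D}^2=\frac{2}{m-2}f_kR_{ij}D_{ijk}$ that the paper uses; but from there your route through the $\alpha\neq 0$ case is genuinely different from the proof the paper writes out. The paper substitutes the \emph{first} integrability condition \eqref{firstGeneralIntCond} into the key identity, converts the Weyl--Ricci contraction via the Bach definition, integrates the Cotton divergence by parts, and only at the end uses the structure equation to cancel $\alpha f_kR_{ij}C_{ijk}+\beta f_{ij}f_kC_{ijk}$ and \eqref{firstGeneralIntCond} again to turn $\pa{f_if_jC_{ijk}}_k$ into $\pa{f_if_jD_{ijk}}_k$. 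You instead expand $\pa{f_if_jD_{ijk}}_k$ by the product rule, use \eqref{EQ_ftk} to identify $f_{ik}f_jD_{ijk}=\frac{\alpha}{\beta}\cdot\frac{m-2}{2}\abs{D}^2$, and contract the \emph{second} integrability condition \eqref{secondGeneralIntCond} with $f_if_j$ so that the Cotton and Weyl cross-terms die by antisymmetry --- this is precisely the ``direct way'' that the authors themselves sketch in the remark immediately following the lemma. Your version has the advantage that the Cotton tensor never enters the nondegenerate case at all, at the cost of treating $\alpha\neq0$ and $\alpha=0$ by two separate arguments, whereas the paper's computation produces a single intermediate formula valid for all $\alpha$ and then branches. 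Your $\alpha=0$ case (substituting $D_{ijk}=f_tW_{tijk}$ into the key identity and then invoking the Bach definition) is likewise a clean and correct variant of the paper's specialization. All the antisymmetry/trace-freeness cancellations you invoke do hold as claimed.
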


\begin{proof}
We observe that, since $D_{ijk}=-D_{ikj}$,
\[
\abs{D}^2 = D_{ijk}D_{ijk} = \frac{1}{m-2}D_{ijk}\pa{f_kR_{ij}-f_jR_{ik}} = \frac{1}{m-2}\pa{f_kR_{ij}D_{ijk}+f_jR_{ik}D_{ikj}},
\] so that
\begin{equation}\label{DSquaredNorm}
\abs{D}^2 = \frac{2}{m-2}f_kR_{ij}D_{ijk}.
\end{equation}

The nondegeneracy condition $\beta-\frac{(m-2)\alpha\mu}{\beta}\neq 0$ implies that, using \eqref{firstGeneralIntCond} and the definition of the Bach tensor, we can write

\begin{align*}
  \pa{\frac{m-2}{2}}\sq{\beta-\frac{(m-2)\alpha\mu}{\beta}}\abs{D}^2 &= f_kR_{ij}\pa{\alpha C_{ijk}+\beta f_t W_{tijk}} \\ &=\alpha f_kR_{ij}C_{ijk}-\beta f_if_jR_{tk}W_{itjk} \\ &=\alpha f_kR_{ij}C_{ijk}-\beta(m-2)f_if_jB_{ij}+\beta f_if_jC_{ijk, k}.
\end{align*}
By the symmetries of the Cotton tensor we also have
\begin{align*}
  f_if_jC_{ijk, k} &= f_i\pa{f_jC_{ijk}}_k -f_if_{jk}C_{ijk} \\ &= \pa{f_if_jC_{ijk}}_k-f_{ik}f_jC_{ijk} \\&=\pa{f_if_jC_{ijk}}_k+f_{ij}f_kC_{ijk},
\end{align*}
therefore we obtain
\begin{align}\label{NormDsquared_first}
  \pa{\frac{m-2}{2}}\sq{\beta-\frac{(m-2)\alpha\mu}{\beta}}\abs{D}^2 &=\alpha f_kR_{ij}C_{ijk}-\beta(m-2)f_if_jB_{ij}+\beta\pa{f_if_jC_{ijk}}_k+\beta f_{ij}f_kC_{ijk}.
  \end{align}
  If $\alpha=0$, using equation \eqref{Eq_ETS_components} in \eqref{NormDsquared_first} we immediately get
  \begin{equation*}
    \pa{\frac{m-2}{2}}\abs{D}^2 =
    -(m-2)f_if_jB_{ij}+\pa{f_if_jC_{ijk}}_k,
  \end{equation*}
  that is \eqref{EQ_Dsquared_alpha0}.

  If $\alpha \neq 0$, using equations \eqref{Eq_ETS_components} and \eqref{firstGeneralIntCond}  in \eqref{NormDsquared_first} and simplifying we deduce
   \begin{equation}
    \pa{\frac{m-2}{2}}\sq{\beta-\frac{(m-2)\alpha\mu}{\beta}}\abs{D}^2 = -\beta(m-2)f_if_jB_{ij}+\frac{\beta}{\alpha}\sq{\beta-\frac{(m-2)\alpha\mu}{\beta}}\pa{f_if_jD_{ijk}}_k,
  \end{equation}
  that is, equation \eqref{EQ_Dsquared_general}.
  \end{proof}
\begin{rem}
In case $\alpha \neq 0$ equation \eqref{EQ_Dsquared_general} can be
obtained in a direct way: one takes the second integrability
condition \eqref{secondGeneralIntCond}, multiplies both members by
$f_if_j$ and simplifies, using the symmetries of the tensors
involved and equation \eqref{firstGeneralIntCond}.
\end{rem}

\begin{theorem}\label{TH_BZDZ}
  Let $\varrg$ be a complete nondegenerate gradient Einstein-type manifold of dimension $m\geq 3$.
  If $B\pa{\nabla f, \cdot} =0$ and $f$ is proper, then $D=0$.
\end{theorem}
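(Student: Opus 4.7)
The plan is to combine Lemma \ref{LMasd} with the divergence theorem, exploiting the skew-symmetry of $D$ in its last two indices together with the properness of $f$ to make the boundary contributions vanish identically, with no decay or cutoff argument required.

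First, I would translate the hypothesis $B(\nabla f,\cdot)=0$ into the index identity $f_iB_{ij}=0$ for all $j$, and in particular $f_if_jB_{ij}=0$. Feeding this into the identities of Lemma \ref{LMasd}, namely \eqref{EQ_Dsquared_general} when $\alpha\neq0$ or \eqref{EQ_Dsquared_alpha0} when $\alpha=0$, and using the nondegeneracy condition $\beta-(m-2)\alpha\mu/\beta\neq0$ to divide through in the former case, I obtain a pointwise identity of the uniform shape
\[
\kappa\,|D|^{2}\;=\;\pa{f_if_jT_{ijk}}_k,
\]
where $\kappa\neq0$ is a constant depending on $\alpha,\beta,\mu,m$ and $T_{ijk}$ equals either $D_{ijk}$ (case $\alpha\neq0$) or $C_{ijk}$ (case $\alpha=0$). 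Crucially, in both cases $T$ is skew-symmetric in the last two indices.

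Next, since $f$ is proper and $M$ is connected, $f(M)$ is an interval, and by Sard's theorem regular values are dense in it. For any two regular values $a<b$ in $f(M)$, the sublevel shell $\Omega_{a,b}=\set{x\in M\tc a\leq f(x)\leq b}$ is compact by properness, with smooth boundary contained in $\set{f=a}\cup\set{f=b}$, on which the outward unit normal is $\nu=\pm\nabla f/\abs{\nabla f}$. Integrating the identity over $\Omega_{a,b}$ and invoking the divergence theorem yields
\[
\kappa\int_{\Omega_{a,b}}\abs{D}^{2}\,dV\;=\;\int_{\partial\Omega_{a,b}}f_if_jT_{ijk}\,\nu_k\,dS\;=\;\pm\int_{\partial\Omega_{a,b}}\frac{f_if_jf_k\,T_{ijk}}{\abs{\nabla f}}\,dS\;=\;0,
\]
because the skew-symmetry $T_{ijk}=-T_{ikj}$ forces the pointwise vanishing $f_jf_k\,T_{ijk}\equiv0$. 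Therefore $D\equiv0$ on $\Omega_{a,b}$; letting $a$ decrease and $b$ increase towards the endpoints of $f(M)$ along regular values exhausts $M$ up to a closed set of measure zero, and continuity of $D$ gives $D\equiv0$ throughout $M$.

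The main substantive step is really the preparatory Lemma \ref{LMasd}, which packages all of the geometric content; the only potential obstacle in the present proof is verifying that the boundary integrand is genuinely a triple contraction $f_if_jf_k\,T_{ijk}$ and hence killed by skew-symmetry, rather than merely being small at infinity. Once this algebraic observation is made, no growth hypothesis on $D$, no integrability of $\abs{\nabla f}$, and no auxiliary cutoff functions are needed: the properness of $f$ alone supplies the compact exhaustion by level-set regions, and the divergence theorem does the rest.
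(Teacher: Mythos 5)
Your proposal is correct and follows essentially the same route as the paper: both apply Lemma \ref{LMasd} with $f_if_jB_{ij}=0$, recognize $\pa{\tfrac{m-2}{2}}\abs{D}^2$ as the divergence of the vector field with components proportional to $f_if_jD_{ijk}$ (or $f_if_jC_{ijk}$ when $\alpha=0$), and kill the boundary flux through regular level sets by the skew-symmetry of $D$ (resp. $C$) in the last two indices, using properness to get compact integration domains. Your use of compact shells $\set{a\leq f\leq b}$ rather than the paper's sublevel sets $\set{f\leq c}$ is only a cosmetic (arguably slightly more careful) variation.
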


\begin{proof}
 We define the vector field $Y=Y(\alpha)$ of components
 \begin{equation}\label{Def_Y}
   Y_k = \begin{cases}
     \frac{\beta}{\alpha}f_if_jD_{ijk} &\text{if} \,\, \alpha \neq 0; \\ f_if_jC_{ijk} &\text{if} \,\, \alpha =0.
   \end{cases}
 \end{equation}
 By the symmetries of $D$ and $C$ we immediately have
 \begin{equation}\label{EQ_YorthogtoNablaf}
 g\pa{Y, \nabla f}=0.
 \end{equation}

  If $B\pa{\nabla f, \cdot} =0$ and $\alpha \neq 0$, from equation \eqref{EQ_Dsquared_general} we obtain
  \begin{equation}
    \pa{\frac{m-2}{2}}\abs{D}^2 =\frac{\beta}{\alpha}\pa{f_if_jD_{ijk}}_k,
  \end{equation}
  while if $\alpha=0$ from equation \eqref{EQ_Dsquared_alpha0} we deduce
  \begin{equation}
     \pa{\frac{m-2}{2}}\abs{D}^2 = \pa{f_if_jC_{ijk}}_k.
  \end{equation}
  In both cases
  \begin{equation}\label{EQ_NormDandDiverY}
     \pa{\frac{m-2}{2}}\abs{D}^2 = \diver Y.
  \end{equation}
  Let now $c$ be a regular value of $f$ and $\Omega_c$ and $\Sigma_c$ be, respectively,  the corresponding sublevel set and level hypersurface, i.e. $\Omega_c = \set{x \in M : f(x)\leq c}$, $\Sigma_c = \set{x \in M : f(x)= c}$.
  Integrating equation \eqref{EQ_NormDandDiverY} on $\Omega_c$ and  using the divergence theorem we get
  \[
  \int_{\Omega_c} \pa{\frac{m-2}{2}}\abs{D}^2 =  \int_{\Omega_c}\diver Y =  \int_{\Sigma_c}g\pa{Y, \nu},
  \]
  where $\nu$ is the unit normal to $\Sigma_c$. Since $\nu$ is in the direction of $\nabla f$, using \eqref{EQ_YorthogtoNablaf} and letting $c\ra +\infty$ we immediately deduce
  \begin{equation}
    \int_M \pa{\frac{m-2}{2}}\abs{D}^2 =0,
  \end{equation}
  which implies $D=0$ on $M$.
\end{proof}
\begin{rem}
The validity of Theorem \ref{TH_BZDZ} is based on that of the divergence theorem in this situation. Thus, instead of using properness of $f$, we can use Theorem A of \cite{GT_ppar} to obtain the above conclusion, that is $D\equiv 0$, under the following assumptions: for some $p>1$, $M$ is $p$-parabolic and the vector field $Y \in L^q(M)$, where $q$ is the conjugate exponent of $p$. We note that a sufficient condition for $p$-parabolicity is
\[
\frac{1}{\operatorname{vol}\pa{\partial B_r}^\frac{1}{p-1}} \not\in L^1\pa{+\infty}
\]
(see e.g. \cite{Tp}), and, according to  \eqref{Def_Y}, $Y\in L^q(M)$ in case for some pair of conjugate exponents $P, P'$ we have
\[
\abs{\nabla f} \in L^{2Pq}\pa{M}\qquad \text{and } \,\, \abs{D} \in L^{P'q}\pa{M} \,\,\text{ if }\, \alpha\neq 0
\]
or
\[
\abs{\nabla f} \in L^{2Pq}\pa{M}\qquad \text{and } \,\, \abs{C} \in L^{P'q}\pa{M} \,\,\text{ if }\, \alpha=0.
\]
\end{rem}

\begin{rem}
A simple computation using  the definition of the tensor $D$ gives
\begin{equation}\label{f_iD_ijk}
f_iD_{ijk} = \frac{1}{m-1}\pa{f_tf_kR_{tj}-f_tf_jR_{tk}},
\end{equation}
 and then
\begin{equation}
f_if_jD_{ijk} = \frac{1}{m-1}\pa{\ricc\pa{\nabla f, \nabla f}f_k - \abs{\nabla f}^2 f_tR_{tk}}.
\end{equation}

  This shows that, in the case $\alpha \neq 0$, the vector field $Y$ defined in \eqref{Def_Y} can be expressed in the remarkable form
  \begin{equation}
    Y = \frac{\beta}{\alpha(m-1)}\sq{\ricc\pa{\nabla f, \nabla f} \nabla f - \abs{\nabla f}^2\pa{\ricc\pa{\nabla f, \cdot}^\sharp}},
  \end{equation}
  where $\sharp$ denotes the usual musical isomorphism.

  Moreover, in the special case of a  gradient Ricci soliton $\pa{M, g, f, \lambda}$, using the fundamental relation $S_k=2f_tR_{tk}$, the vector field $Y$ can also be written in the equivalent form
\[
Y = \frac{1}{2(m-1)} \sq{g\pa{\nabla S, \nabla f}\nabla f-\abs{\nabla f}^2\nabla S}.
\]
We also observe that
  \[
  g\pa{Y, \nabla f} = 0, \,\,\, g\pa{Y, \nabla S} = \frac{1}{2(m-1)}\sq{g\pa{\nabla S, \nabla f}^2-\abs{\nabla S}^2\abs{\nabla f}^2} \leq 0
  \]
  and that
 \[
 \abs{Y}^2 = \frac{1}{4(m-1)^2}\abs{\nabla f}^2\sq{\abs{\nabla S}^2\abs{\nabla f}^2-g\pa{\nabla S, \nabla f}^2} = -\frac{1}{2(m-1)} \abs{\nabla f}^2g\pa{Y, \nabla S} \geq 0.
 \]

\end{rem}

\begin{rem}
In case $\beta=0$ and $\mu\neq 0$, using Remark \ref{RM_betanullo} and arguing as in Lemma \ref{LMasd}, one can obtain the following identity
$$
\frac{\alpha}{2\mu} |C|^{2} = (m-2) f_{i}f_{j}B_{ij} - (f_{i}f_{j}C_{ijk})_{k} .
$$
Then, following the proof of Theorem \ref{TH_BZDZ}, we obtain
\end{rem}

\begin{proposition} \label{PR_betanullo}
 Let $\varrg$ be a complete nondegenerate gradient Einstein-type manifold of dimension $m\geq 3$ and with $\beta=0$.
  If $B\pa{\nabla f, \cdot} =0$ and $f$ is proper, then $C=0$.
\end{proposition}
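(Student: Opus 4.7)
The plan is to mirror the proof of Theorem \ref{TH_BZDZ}, with the Cotton tensor $C$ playing the role previously taken by the three-tensor $D$. The starting ingredient is the identity
$$
\frac{\alpha}{2\mu}\abs{C}^2 \, = \, (m-2)f_if_jB_{ij}-\pa{f_if_jC_{ijk}}_k
$$
recorded in the remark immediately preceding the statement; this is the exact analogue of \eqref{EQ_Dsquared_general} in the degenerate regime $\beta=0$ (with $\mu\neq 0$, since otherwise Remark \ref{RM_betanullo} forces $\alpha C_{ijk}=0$, and hence $C=0$, automatically). The hypothesis $B\pa{\nabla f,\cdot}=0$ reads $f_jB_{ij}=0$ in components, so contracting with $f_i$ kills the first summand on the right-hand side and leaves
$$
\frac{\alpha}{2\mu}\abs{C}^2 \, = \, -\diver Y, \qquad Y_k := f_if_jC_{ijk}.
$$

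The crucial kinematic observation, borrowed verbatim from the proof of Theorem \ref{TH_BZDZ}, is that the skew-symmetry $C_{ijk}=-C_{ikj}$ implies $g\pa{Y,\nabla f}=f_if_jf_kC_{ijk}=0$ by relabelling $j\leftrightarrow k$. I would then choose, via Sard's theorem, a regular value $c$ of $f$, form the sublevel set $\Omega_c=\set{x\in M\tc f(x)\leq c}$, and note that its boundary $\Sigma_c=\set{x\in M\tc f(x)=c}$ is a smooth hypersurface whose outward unit normal $\nu$ is collinear with $\nabla f$. Properness of $f$ guarantees that $\Omega_c$ is compact, so the divergence theorem yields
$$
\int_{\Omega_c}\frac{\alpha}{2\mu}\abs{C}^2 \, = \, -\int_{\Omega_c}\diver Y \, = \, -\int_{\Sigma_c}g\pa{Y,\nu} \, = \, 0.
$$

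Exhausting $M$ by letting $c\ra+\infty$ along the (full-measure) set of regular values gives $\int_M\abs{C}^2=0$, and since $\alpha/(2\mu)$ is a nonzero constant we conclude $C\equiv 0$ on $M$. The only real obstacle is the preparatory identity cited above, but that is the content of the hint ``arguing as in Lemma \ref{LMasd}'': one uses the algebraic expression for $C$ supplied by Remark \ref{RM_betanullo}, multiplies the second integrability relation $\alpha B_{ij}=(m-2)^{-1}\set{\alpha C_{ijk,k}-\mu f_tf_kW_{itjk}}$ of that remark by $f_if_j$, and manipulates the resulting contraction $f_if_jC_{ijk,k}$ by the same Leibniz trick used in Lemma \ref{LMasd}, namely $f_if_jC_{ijk,k}=\pa{f_if_jC_{ijk}}_k+f_{ij}f_kC_{ijk}$ followed by substitution of $f_{ij}$ from the structure equation with $\beta=0$. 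Once that identity is in hand, the argument above is a line-by-line transcription of Theorem \ref{TH_BZDZ}.
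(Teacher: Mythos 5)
Your argument is correct and coincides with the paper's own proof: the authors likewise establish the identity $\frac{\alpha}{2\mu}\abs{C}^2=(m-2)f_if_jB_{ij}-\pa{f_if_jC_{ijk}}_k$ by arguing as in Lemma \ref{LMasd} and then repeat the divergence-theorem argument of Theorem \ref{TH_BZDZ} with $Y_k=f_if_jC_{ijk}$, using $g\pa{Y,\nabla f}=0$ and properness of $f$. Your explicit treatment of the trivial case $\mu=0$ (where Remark \ref{RM_betanullo} already gives $\alpha C=0$ with $\alpha\neq 0$) is a small but welcome addition that the paper leaves implicit.
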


\

\section{$D$ and the geometry of the level sets of $f$}
\label{sec_7}

In this section we relate the tensor $D$ to the geometry of the regular level sets of the potential function $f$. Our first result highlights, in the case $\alpha\neq 0$, the link between the squared norm of the tensor $D$ and the second fundamental form of the level sets of $f$. This should be compared with \cite[Proposition 3.1]{CaoChen} and \cite[Lemma 4.1]{HDCaoChen_Steady}. For the case $\alpha=0$ we refer to \cite[Proposition 2.3]{HuangLi_quasiYamabe}.

From now on, we extend our index convention assuming $1 \leq i, j, k, \ldots \leq m$ and
$1\leq a, b, c, \ldots \leq m-1$.

\begin{proposition}\label{PR_Dsquared&H}
Let $\varrg$ be a complete $m$-dimensional ($m\geq 3$) gradient Einstein-type
manifold with $\alpha,\beta\neq 0$. Let $c$ be a regular value of $f$ and
let $\Sigma_c = \set{x\in M | f(x)=c}$ be the corresponding level
hypersurface. For $p\in \Sigma_c$ choose an orthonormal frame such
that $\set{e_1, \ldots, e_{m-1}}$ are tangent to $\Sigma_c$ and $e_m
= \frac{\nabla f}{\abs{\nabla f}}$ (i.e., $\set{e_1, \ldots,
e_{m-1}, e_m}$ is a local first order frame along $f$). Then, in $p$,
the squared norm of the  tensor $D$ can be written as
\begin{equation}\label{EQ_d2mad}
  \abs{D}^2 = \pa{\frac{\beta}{\alpha}}^2\frac{2\abs{\nabla f}^4}{\pa{m-2}^2}\abs{h_{ab}-h\delta_{ab}}^2+\frac{2\abs{\nabla f}^2}{(m-1)(m-2)}R_{am}R_{am},
\end{equation}
where $h_{ab}$ are the coefficients of the second fundamental tensor and $h$ is the mean curvature of $\Sigma_c$.
\end{proposition}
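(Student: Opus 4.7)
The plan is to work in the first-order orthonormal frame adapted to $\Sigma_c$, so that $f_a=0$ for $1\le a\le m-1$ and $f_m=|\nabla f|$. In such a frame, a direct computation expressing $f_{ab}$ via the Weingarten map of $\Sigma_c$ with respect to the unit normal $e_m=\nabla f/|\nabla f|$ gives $f_{ab}=\varepsilon\,|\nabla f|\,h_{ab}$ for some sign $\varepsilon=\pm 1$ (depending on the orientation convention for the second fundamental form), and consequently $\sum_{c=1}^{m-1}f_{cc}=\varepsilon(m-1)|\nabla f|\,h$. Since the final identity involves $|h_{ab}-h\delta_{ab}|^2$, the sign $\varepsilon$ will drop on squaring and can be safely ignored at the end.

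Since $\alpha\ne 0$, I would then compute each type of component of $D$ using the most convenient of its three equivalent forms, together with the traced identity \eqref{f_iD_ijk}. Contracting \eqref{f_iD_ijk} with $f_i=|\nabla f|\delta_{im}$ yields at once
\[
D_{mjk} \, = \, \frac{|\nabla f|}{m-1}\bigl(\delta_{km}R_{mj}-\delta_{jm}R_{mk}\bigr),
\]
whence $D_{mab}=0$ and $D_{mam}=-D_{mma}=\tfrac{|\nabla f|}{m-1}R_{am}$ for $a,b\le m-1$. Next, the first (Ricci) form of $D$, evaluated on indices $a,b,c\le m-1$, collapses thanks to $f_a=0$ to
\[
D_{abc} \, = \, \frac{|\nabla f|}{(m-1)(m-2)}\bigl(R_{mc}\delta_{ab}-R_{mb}\delta_{ac}\bigr) ,
\]
while the third (Hessian) form, which is available because $\alpha\ne 0$, applied to $D_{abm}$ with $a,b\le m-1$ and simplified via $f_a=0$, $f_{ab}=\varepsilon|\nabla f|h_{ab}$ and the trace relation above, yields
\[
D_{abm} \, = \, -\varepsilon\,\frac{\beta}{\alpha}\frac{|\nabla f|^2}{m-2}\bigl(h_{ab}-h\,\delta_{ab}\bigr) .
\]
All remaining components vanish: $D_{amm}=0$ by the antisymmetry $D_{ijk}=-D_{ikj}$, and $D_{mab}=0$ was noted above.

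Finally, I would compute $|D|^2=D_{ijk}D_{ijk}$ by enumerating cases according to how many of the indices equal $m$. The $D_{abm}$ components, together with their skew-partners $D_{amb}=-D_{abm}$, deliver
\[
\sum_{a,b}\bigl(D_{abm}^2+D_{amb}^2\bigr) \, = \, \pa{\frac{\beta}{\alpha}}^2\frac{2|\nabla f|^4}{(m-2)^2}\,\abs{h_{ab}-h\,\delta_{ab}}^2,
\]
which is the first term on the right-hand side of \eqref{EQ_d2mad}. The contributions to $R_{am}R_{am}$ come from two distinct sources: a direct computation of $\sum_{a,b,c}D_{abc}^2$ using the identity $(R_{mc}\delta_{ab}-R_{mb}\delta_{ac})^2$ gives $\tfrac{2|\nabla f|^2}{(m-1)^2(m-2)}R_{am}R_{am}$, while the pair $D_{mam}$, $D_{mma}$ contributes a further $\tfrac{2|\nabla f|^2}{(m-1)^2}R_{am}R_{am}$. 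These two terms merge via the elementary identity $\tfrac{1}{m-2}+1=\tfrac{m-1}{m-2}$ into the single stated coefficient $\tfrac{2|\nabla f|^2}{(m-1)(m-2)}$. The main obstacle is precisely this combinatorial bookkeeping: keeping track of which components vanish by the skew-symmetry and trace-free properties, correctly accounting for the multiplicities, and verifying that the three different sources of $R_{am}^2$ recombine with the correct coefficient; once this is handled the identity follows with no further analytic input.
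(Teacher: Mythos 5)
Your proposal is correct, and all the component formulas and coefficients check out (in particular the recombination $\tfrac{1}{(m-1)^2(m-2)}+\tfrac{1}{(m-1)^2}=\tfrac{1}{(m-1)(m-2)}$ and the factor $2$ from the skew pairs). However, it organizes the computation differently from the paper. The paper never isolates individual frame components of $D$: it first uses the structure equation to write $h_{ab}=-\tfrac{1}{\abs{\nabla f}}f_{ab}=\tfrac{1}{\beta\abs{\nabla f}}\sq{\alpha R_{ab}-\pa{\rho S+\lambda}\delta_{ab}}$ and expands $\abs{h_{ab}-h\delta_{ab}}^2$ in terms of $\abs{\ricc}^2$, $R_{am}R_{am}$, $R_{mm}$ and $S$; separately it expands $\abs{D}^2$ by squaring the three-term definition of $D$ (six cross terms) in the adapted frame; and finally it compares the two expressions to extract \eqref{EQ_d2mad}. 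You instead evaluate each component of $D$ directly -- $D_{abc}$ from the Ricci form, $D_{abm}$ from the Hessian form (legitimate since $\alpha\neq 0$), $D_{mjk}$ from the contracted identity \eqref{f_iD_ijk} -- and then sum squares. Your route has the advantage of making the identity transparent: it exhibits $D_{abm}$ as a multiple of the traceless second fundamental form and $D_{mam}$ as a multiple of $R_{am}$, so the vanishing of $D$ visibly forces both $\Phi=0$ and $R_{am}=0$ componentwise, which is exactly what is exploited later in Theorem \ref{TH_PropertiesOnSigmac}; the price is that you must invoke the equivalence of the three expressions for $D$ (and hence the structure equation) rather than a single self-contained expansion. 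One minor point worth making explicit in a write-up: with the paper's orientation convention one has $\varepsilon=-1$, i.e.\ $f_{ab}=-\abs{\nabla f}h_{ab}$, though as you note the sign is irrelevant after squaring.
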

\begin{rem}
Note that $\abs{h_{ab}-h\delta_{ab}}^2$ is the squared norm of the
traceless second fundamental tensor $\Phi$ of components $\Phi_{ab}
= h_{ab}-h\delta_{ab}$.
\end{rem}

\begin{proof}
First of all, we observe that, in the chosen frame, we have
\begin{equation}\label{EQ_compfSigma}
df = f_a\theta^a+f_m\theta^m = \abs{\nabla f}\theta^m,
\end{equation}
since $f_a=0, \, a=1, \ldots, m-1$.

The second fundamental tensor $II$ of the immersion $\Sigma_c \hookrightarrow M$ is
\[
II = h_{ab}\theta^b\otimes\theta^a\otimes \nu,
\]
where the coefficients $h_{ab}=h_{ba}$ are defined as
\begin{equation}
\nabla e_m = \nabla \nu = \theta^a_m\otimes e_a = -\theta^m_a\otimes e_a = -h_{ab}\theta^b\otimes e_a
\end{equation}
(see also \cite{MasRigSet}), so that
\begin{equation}
h_{ab} = g\pa{II\pa{e_a, e_b}, \nu}= -g\pa{\nabla_{e_a}\nu, e_b} =  -\pa{\nabla \nu}^\flat\pa{e_a, e_b}.
\end{equation}
In the present setting we have
\[
\nabla \nu = \frac{1}{\abs{\nabla f}}\nabla\pa{\nabla f} +\nabla\pa{\frac{1}{\abs{\nabla f}}}\otimes \nabla f
\]
and
\[
\pa{\nabla \nu}^\flat = \frac{1}{\abs{\nabla f}}\hess(f) +d\pa{\frac{1}{\abs{\nabla f}}}\otimes df,
\]
thus, using equation \eqref{Eq_ETS_components}, we deduce
\begin{equation}\label{EQ_hab}
h_{ab} = -\frac{1}{\abs{\nabla f}}f_{ab} = \frac{1}{\beta\abs{\nabla f}}\sq{\alpha R_{ab}-\pa{\rho S+\lambda}\delta_{ab}},
\end{equation}

The mean curvature $h$ is defined as $h=\frac{1}{m-1}h_{aa}$; tracing equation \eqref{EQ_hab} we get
\begin{equation}\label{EQ_MeanCurvature}
  h = \frac{1}{\beta\abs{\nabla f}}\sq{\pa{\frac{\alpha}{m-1}-\rho}S - \frac{\alpha}{m-1}R_{mm}-\lambda}.
\end{equation}
Now we compute the squared norm of the traceless second fundamental
tensor $\Phi$:
 \begin{align}\label{EQ_PHI}
 \abs{h_{ab}-h\delta_{ab}}^2 &= \abs{h_{ab}}^2-2h h_{aa} +(m-1)h^2 = \abs{h_{ab}}^2-(m-1)h^2 \\ \nonumber &=\frac{1}{\beta^2\abs{\nabla f}^2}\set{\sq{\alpha R_{ab}-\pa{\rho S+\lambda}\delta_{ab}}^2-(m-1)\sq{\pa{\frac{\alpha}{m-1}-\rho}S - \frac{\alpha}{m-1}R_{mm}-\lambda}^2} \\ \nonumber &=\frac{\alpha^2}{\beta^2\abs{\nabla f}^2}\set{\abs{\ricc}^2-2R_{am}R_{am}-\pa{R_{mm}}^2-\frac{1}{m-1}\sq{S^2-2SR_{mm}+\pa{R_{mm}}^2}}\\\nonumber &=\frac{\alpha^2}{\beta^2\abs{\nabla f}^2}\sq{\abs{\ricc}^2-2R_{am}R_{am}-\frac{m}{m-1}\pa{R_{mm}}^2-\frac{1}{m-1}S^2+\frac{2}{m-1}SR_{mm}}.
 \end{align}
 On the other hand, from the definition of $D$ we have
 \begin{align}\label{EQ_Dsq}
 \abs{D}^2 &= \frac{\pa{f_kR_{ij}-f_jR_{ik}}^2}{(m-2)^2}+\frac{\pa{f_tR_{tk}\delta_{ij}-f_tR_{tj}\delta_{ik}}^2}{(m-1)^2(m-2)^2}+\frac{S^2}{(m-1)^2(m-2)^2}\pa{f_k\delta_{ij}-f_j\delta_{ik}}^2\\ \nonumber&+\frac{2}{(m-1)(m-2)^2}\pa{f_kR_{ij}-f_jR_{ik}}\pa{f_tR_{tk}\delta_{ij}-f_tR_{tj}}\\\nonumber&-\frac{2S}{(m-1)(m-2)^2}\pa{f_kR_{ij}-f_jR_{ik}}\pa{f_k\delta_{ij}-f_j\delta_{ik}} \\\nonumber&-\frac{2S}{(m-1)^2(m-2)^2}\pa{f_tR_{tk}\delta_{ij}-f_tR_{tj}\delta_{ik}}\pa{f_k\delta_{ij}-f_j\delta_{ik}} \\\nonumber &=\frac{2\abs{\nabla f}^2}{\pa{m-2}^2}\pa{\abs{\ricc}^2-R_{am}R_{am}-R_{mm}R_{mm}}+\frac{2\abs{\nabla f}^2}{(m-1)(m-2)^2}\pa{R_{am}R_{am}+R_{mm}R_{mm}}\\\nonumber &+\frac{2S^2}{(m-1)(m-2)^2}\abs{\nabla f}^2 + \frac{4\abs{\nabla f}^2}{(m-1)(m-2)^2}\pa{SR_{mm}-\pa{R_{mm}}^2-R_{am}R_{am}} \\ \nonumber &-\frac{4S\abs{\nabla f}^2}{(m-1)(m-2)^2}\pa{S-R_{mm}}-\frac{4S\abs{\nabla f}^2}{(m-1)(m-2)^2}R_{mm}.
 \end{align}
 Symplifying, rearranging and comparing \eqref{EQ_PHI} and \eqref{EQ_Dsq} we arrive at
 \begin{equation}
 \frac{(m-2)^2}{2\abs{\nabla f}^2}\abs{D}^2 = \pa{\frac{\beta}{\alpha}}^2\abs{\nabla f}^2\abs{h_{ab}-h\delta_{ab}}^2+\pa{\frac{m-2}{m-1}}R_{am}R_{am},
 \end{equation}
which easily implies equation \eqref{EQ_d2mad}.

\end{proof}
\begin{rem}
We explicitly note that $R_{am}R_{am}$ is a \emph{globally} defined quantity (since $\abs{D}^2$, $\abs{\nabla f}^2$ and $\abs{h_{ab}-h\delta_{ab}}^2$ are globally defined), but $R_{am}$ is only \emph{locally} defined. This implies that, if $R_{am}=0$ on the open set where the local frame $e_1, \ldots, e_{m}$ is defined, then $d R_{am}=0$ but $R_{am, k}$ is not necessarily zero (see the proof of Proposition \ref{PR_vanishingOfC} below).
\end{rem}

Proposition \ref{PR_Dsquared&H} is one of the key ingredients in the
proof of the following theorem, which generalizes
\cite[Proposition 3.2 ]{CaoChen} (compare also with  in
\cite[Proposition 2.4]{HuangLi_quasiYamabe}). Our proof is similar to those in
\cite{CaoChen} and \cite{HuangLi_quasiYamabe}, but the presence of
$\mu$ and the nonconstancy of $\lambda$ require extra care, in
particular in showing that $S$ is constant on $\Sigma_c$.

\begin{theorem}\label{TH_PropertiesOnSigmac}
Let $\varrg$ be a complete $m$-dimensional, $m\geq 3$, gradient Einstein-type
manifold with $\alpha,\beta \neq 0$ and   tensor $D\equiv 0$. Let $c$ be a
regular value of $f$ and let $\Sigma_c = \set{x\in M | f(x)=c}$ be
the corresponding level hypersurface. Choose any local orthonormal
frame such that $\set{e_1, \ldots, e_{m-1}}$ are tangent to
$\Sigma_c$ and $e_m = \frac{\nabla f}{\abs{\nabla f}}$ (i.e.,
$\set{e_1, \ldots, e_{m-1}, e_m}$ is a first order frame along $f$). Then
\begin{enumerate}
  \item $\abs{\nabla f}^2$ is constant on $\Sigma_c$;
  \item $R_{am}=R_{ma}=0$ for every $a=1, \ldots, m-1$ and $e_m$ is an eigenvector of $\ricc$;
  \item $\Sigma_c$ is totally umbilical;
  \item the mean curvature $h$ is constant on $\Sigma_c$;
  \item the scalar curvature $S$ and $\lambda$ are constant on $\Sigma_c$;
  \item $\Sigma_c$ is Einstein with respect to the induced metric;
  \item on $\Sigma_c$ the (components of the) Ricci tensor of $M$ can be written as $R_{ab} = \frac{S-\Lambda_1}{m-1} \delta_{ab}$, where $\Lambda_1 \in \erre$ is an eigenvalue of multiplicity $1$ or $m$ (and in this latter case $S=m \Lambda_1$); in either case $e_m$ is an eigenvector associated to $\Lambda_1$.
\end{enumerate}
\end{theorem}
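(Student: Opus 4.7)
My plan is to establish the seven properties in the order $(3), (2), (1), (4), (7), (5), (6)$. Since $c$ is a regular value, $|\nabla f|>0$ on $\Sigma_{c}$ and the adapted frame of the statement is well-defined along $\Sigma_{c}$.

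The first block of items is quick. Items (3) and (2) are immediate from Proposition~\ref{PR_Dsquared&H}: with $D\equiv 0$ and $|\nabla f|\neq 0$ on $\Sigma_{c}$, formula~\eqref{EQ_d2mad} forces both $|h_{ab}-h\delta_{ab}|^{2}=0$ and $R_{am}R_{am}=0$. For~(1), in the adapted frame $f_{k}=|\nabla f|\delta_{km}$, so $e_{a}(|\nabla f|^{2})=2|\nabla f|f_{ma}$; the $(a,m)$-component of~\eqref{Eq_ETS_components} on $\Sigma_{c}$, where $f_{a}=0$ and $\delta_{am}=0$, reduces to $\alpha R_{am}+\beta f_{am}=0$, whence $f_{am}=0$ by~(2). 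For~(4), I apply the Codazzi--Mainardi identity $h_{ab;c}-h_{ac;b}=R_{abcm}$ to the umbilical hypersurface and trace on $a=b$, getting $(m-2)h_{,c}=R_{cm}=0$. For~(7), the tangent-tangent component of~\eqref{Eq_ETS_components} on $\Sigma_{c}$, using $f_{a}=0$ and $f_{ab}=-|\nabla f|h\delta_{ab}$, reads
\[
\alpha R_{ab}=(\rho S+\lambda+\beta|\nabla f|h)\delta_{ab}=:\alpha\Lambda_{2}\delta_{ab},
\]
so $R_{ab}=\Lambda_{2}\delta_{ab}$, which combined with~(2) gives the spectral picture claimed in~(7).

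The hard part is~(5). Specializing~\eqref{EQ_Sk} to $k=a$ and using $f_{a}=0$ and $f_{t}R_{ta}=|\nabla f|R_{ma}=0$, one obtains the single linear relation
\begin{equation*}
[\alpha-2\rho(m-1)]\,S_{a}=2(m-1)\,\lambda_{a},
\end{equation*}
and trying to extract a second independent equation from the tangential derivative of the trace~\eqref{Eq_ETS_global_traced} turns out to be tautological. The device that breaks the tautology is that $D\equiv 0$ holds \emph{globally} on $M$, so Proposition~\ref{PR_Dsquared&H} applies to every regular level set in a tubular neighborhood of $\Sigma_{c}$; iterating the argument used for~(1), $|\nabla f|^{2}$ is constant on each nearby level set, hence $|\nabla f|^{2}=\psi(f)$ for some smooth $\psi$. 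A direct computation then gives
\[
f_{mm}=\frac{\hess(f)(\nabla f,\nabla f)}{|\nabla f|^{2}}=\frac{\psi'(f)}{2},
\]
so $f_{mm}$ depends only on $f$ and $(f_{mm})_{,a}=0$ on $\Sigma_{c}$. I compute $f_{mm,a}$ in a second way: the commutation rule~\eqref{ThirdDerivFunctionRiem} combined with $R_{mmam}=0$ gives $f_{mm,a}=f_{am,m}$, and differentiating the $(a,m)$-component of~\eqref{Eq_ETS_components} in the $m$-direction yields $\beta f_{am,m}=-\alpha R_{am,m}$ on $\Sigma_{c}$. To evaluate $R_{am,m}$, apply~\eqref{firstGeneralIntCond} with $D=0$ to $(i,j,k)=(m,a,m)$: the Weyl term vanishes by antisymmetry on the $(m,m)$ pair, so $C_{mam}=0$, which by the definition of Cotton and the auxiliary identity $R_{mm,a}=S_{a}/2$ (itself derived by a routine manipulation using the displayed linear relation together with~\eqref{Eq_ETS_components} and~\eqref{Eq_ETS_global_traced}) yields $R_{am,m}=(m-2)S_{a}/[2(m-1)]$. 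Comparing the two expressions for $(f_{mm})_{,a}$ now forces $S_{a}=0$, since $\alpha,\beta\neq 0$ and $m\geq 3$, and then the displayed linear relation gives $\lambda_{a}=0$.

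With items (1)--(5) and~(7) established, item~(6) follows from the local warped-product structure available in a tubular neighborhood of $\Sigma_{c}$: running the arguments for~(1) and~(4) on every nearby regular level set, combined with umbilicity, shows that the metric takes the form $g=dr^{2}+\phi(r)^{2}g^{\Sigma}$ near $\Sigma_{c}$. Plugging into the tangent-tangent component of~\eqref{Eq_ETS_components} and using the standard warped-product expression for the ambient Ricci, one obtains that $R^{\Sigma}_{ab}$ is a scalar multiple of $\delta_{ab}$; the scalar is constant on $\Sigma_{c}$ by the previously proved items, so $(\Sigma_{c}, g^{\Sigma_{c}})$ is Einstein.
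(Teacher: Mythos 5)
Your proof is correct, and for items (1)--(4) and (7) it runs essentially parallel to the paper's; the genuine divergence is in (5) and, to a lesser extent, (6). For (5) the paper never introduces $\psi$: it evaluates \eqref{EQ_Sk} at $k=m$, combines the result with the constancy on $\Sigma_c$ of $h$, $\abs{\nabla f}$ and of $\sq{\alpha-2\rho(m-1)}S-2(m-1)\lambda$, and then differentiates tangentially once more to conclude that $R_{mm}$ is constant on $\Sigma_c$, finishing via $S-2R_{mm}=\mathrm{const}$. Your route instead exploits the fact that $D\equiv 0$ holds globally to propagate item (1) to all nearby level sets, so that $\abs{\nabla f}^2=\psi(f)$ and $f_{mm}=\psi'(f)/2$, and then plays two computations of $f_{mm,a}$ against each other through $C_{mam}=0$; this is valid (and the paper's own argument also tacitly uses the neighbourhood versions of its identities at the analogous point). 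The one step you must actually write out is the auxiliary identity $R_{mm,a}=S_a/2$: it does \emph{not} follow from the $(m,m)$-component of \eqref{Eq_ETS_components} — that component, together with $e_a(f_{mm})=e_a\pa{\abs{\nabla f}^2}=0$ and your displayed linear relation, yields $R_{mm,a}=\tfrac{S_a}{2(m-1)}$ — but rather from the tangential trace, i.e.\ from $e_a\pa{\beta(m-1)\abs{\nabla f}\,h}=0$ applied to \eqref{EQ_MeanCurvature} together with the linear relation, hence from items (1) and (4). Amusingly, the clash between these two values of $R_{mm,a}$ already forces $S_a=0$ for $m\geq 3$, so your Cotton/commutation detour, while correct, can be shortcut. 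Two minor points: the constancy of $\Lambda_1,\Lambda_2$ asserted in (7) only becomes available after (5), so (7) should be finalized last; and for (6) the paper stays on $\Sigma_c$ and uses the Gauss equation together with $W_{amcm}=0$, whereas you import the warped-product normal form from the proof of Theorem \ref{TH_Main} one section early — both yield the Einstein condition on the fibre.
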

\begin{proof}
If $D=0$, from Proposition \ref{PR_Dsquared&H} we immediately deduce that
\begin{equation}\label{EQ_umbilicity}
h_{ab}-h \delta_{ab}=0,
\end{equation}
that is, property (3), and
\begin{equation}\label{EQ_Ram=0}
R_{am}=0, \qquad a=1, \ldots, m-1.
\end{equation}

From \eqref{EQ_umbilicity} a simple computation using \eqref{EQ_hab} and \eqref{EQ_MeanCurvature} shows that
\begin{equation}\label{EQ_RiccRestricted}
R_{ab} = \frac{S-R_{mm}}{m-1}\delta_{ab},
\end{equation}
which also implies
\begin{equation}
  \ricc\pa{\nu, \nu}=\frac{R_{ij}f_if_j}{\abs{\nabla f}^2} = R_{mm} = R_{mm}\abs{\nu}^2;
\end{equation}
this complete the proof of (2).
To prove (1) we take the covariant derivative of $\beta\abs{\nabla f}^2$ and use \eqref{Eq_ETS_components}:
\begin{align*}
\beta\pa{\abs{\nabla f}^2}_k &= 2\beta f_if_{ik} \\&= 2\sq{\pa{\rho S+\lambda-\mu\abs{\nabla f}^2}f_k-\alpha f_tR_{tk}} \\ &= 2\sq{\pa{\rho S+\lambda-\mu\abs{\nabla f}^2}f_k-\alpha f_cR_{ck}-\alpha\abs{\nabla f}R_{mk}};
\end{align*}
evaluating the previous relation at $k=a$ and using property (2) we
immediately get
\[
\pa{\abs{\nabla f}^2}_a=0,
\]
that is (1). To prove (4) we start from Codazzi equations, that in
our setting read
\begin{equation}
  -R_{mabc} = h_{ab, c}-h_{ac, b};
\end{equation}
tracing with respect to $a$ and $c$ we get
\[
-R_{maba} = -R_{mkbk}+R_{mmbm} = h_{ab, a}-h_{aa,b},
\]
that is, using (2),
\begin{equation}\label{EQ_haba}
  0 = -R_{mb} = h_{ab, a}-h_{aa,b}.
\end{equation}
On the other hand, from (3) we have
\[
h_{ab, a} = h_b
\]
and
\[
h_{aa, b} = (m-1)h_b,
\]
so that \eqref{EQ_haba} immediately implies
\begin{equation}
  0= (m-2)h_b, \quad b=1, \ldots, m-1,
\end{equation}
that is (4). To show the validity of (5) we first observe that,
evaluating \eqref{EQ_Sk} at $k=a$ and using (2), we deduce
\[
\sq{\alpha-2\rho(m-1)}S_a-2(m-1)\lambda_a = 0,
\]
which implies
\begin{equation}\label{EQ_SLambdaConst}
  \sq{\alpha-2\rho(m-1)}S-2(m-1)\lambda = \text{const.} \,\,\,\ \text{on}\,\,\Sigma_c.
\end{equation}
From equation \eqref{EQ_MeanCurvature}, the constancy of $h$ and of $\abs{\nabla f}$ on $\Sigma_c$ also give that
\begin{equation}\label{EQ_SLambdaRmmConst}
   \sq{\alpha-\rho(m-1)}S-\alpha R_{mm}-(m-1)\lambda = \text{const.} \,\,\,\ \text{on}\,\,\Sigma_c.
\end{equation}
Combining \eqref{EQ_SLambdaConst} and \eqref{EQ_SLambdaRmmConst} we arrive at
\begin{equation}\label{EQ_S2Rmm}
  S-2R_{mm} = \text{const.} \,\,\,\ \text{on}\,\,\Sigma_c.
\end{equation}
Now we evaluate \eqref{EQ_Sk} at $k=m$, we use (2) and rearrange to
deduce
\begin{align}
  \sq{\alpha-2\rho(m-1)}S_m &= 2\pa{\beta+\frac{\alpha\mu}{\beta}}\abs{\nabla f}R_{mm}+2(m-1)\lambda_m-\frac{2\mu\abs{\nabla f}}{\beta}\set{\sq{\alpha-\rho(m-1)}S-(m-1)\lambda}\\ \nonumber &=2\beta\abs{\nabla f}R_{mm}+2(m-1)\lambda_m-\frac{2\mu\abs{\nabla f}}{\beta}\set{\sq{\alpha-\rho(m-1)}S-\alpha R_{mm}-(m-1)\lambda}.
\end{align}
Since by (1) and \eqref{EQ_SLambdaRmmConst} the quantity
$\frac{2\mu\abs{\nabla f}}{\beta}\set{\sq{\alpha-\rho(m-1)}S-\alpha
R_{mm}-(m-1)\lambda}$ is constant on $\Sigma_c$ we infer
\begin{equation}\label{EQ_SmRmmLambdam}
   \sq{\alpha-2\rho(m-1)}S_m -2\beta\abs{\nabla f}R_{mm}-2(m-1)\lambda_m = \text{const.} \,\,\,\ \text{on}\,\,\Sigma_c.
\end{equation}
Now we take the covariant derivative of \eqref{EQ_SmRmmLambdam} and
evaluate at $k=a$ to obtain
\begin{equation}\label{EQ_SLambdaam}
   \sq{\alpha-2\rho(m-1)}S_{ma} -2\beta\abs{\nabla f}R_{mm, a}-2(m-1)\lambda_{ma} = 0 \,\,\,\ \text{on}\,\,\Sigma_c;
\end{equation}
but $S_{ma}=S_{am}$ and $\lambda_{ma}=\lambda_{am}$, thus \eqref{EQ_SLambdaam} can be written as
\begin{equation}
   \set{\sq{\alpha-2\rho(m-1)}S-2(m-1)\lambda}_{am} = 2\beta\abs{\nabla f}R_{mm, a} \,\,\,\ \text{on}\,\,\Sigma_c,
\end{equation}
which implies, by \eqref{EQ_SLambdaConst}, that
\begin{equation}
  R_{mm} = \text{const.} \,\,\,\ \text{on}\,\,\Sigma_c.
\end{equation}
The previous relation, \eqref{EQ_S2Rmm} and \eqref{EQ_SLambdaConst} show that $S$ and $\lambda$ are constant on $\Sigma_c$, that is (5).
To prove (6) we start from the Gauss equations
\[
{}^{\Sigma_c}R_{abcd} = R_{abcd} +h_{ac}h_{bd}-h_{ad}h_{bc},
\]
which by property (3) can be rewritten as
\begin{equation}\label{EQ_GaussSigma}
  {}^{\Sigma_c}R_{abcd} = R_{abcd} +h^2\pa{\delta_{ac}\delta_{bd}-\delta_{ad}\delta_{bc}}.
\end{equation}
Tracing equation \eqref{EQ_GaussSigma} with respect to $b$ and $d$ gives
\begin{equation}\label{EQ_GaussSigmatraced}
  {}^{\Sigma_c}R_{ac} = R_{ac}-R_{amcm} +(m-2)h^2\delta_{ac};
\end{equation}
tracing again we deduce
\begin{equation}\label{EQ_GaussSigmatracedagain}
  {}^{\Sigma_c}S = S-2R_{mm} +(m-1)(m-2)h^2  = \text{const.} \,\,\,\ \text{on}\,\,\Sigma_c.
\end{equation}
Now a simple computation using decomposition \eqref{Riemann_Weyl} of
the Riemann tensor, equation \eqref{EQ_RiccRestricted} and the fact
that $W_{amcm}=0$ (see  Proposition \ref{PR_vanishingOfC}) shows
that
\begin{equation}\label{EQ_Ramcm}
  R_{amcm}=\frac{1}{m-1}R_{mm}\delta_{ac}.
\end{equation}
Next, inserting \eqref{EQ_RiccRestricted} and \eqref{EQ_Ramcm} into
\eqref{EQ_GaussSigmatraced}, we get
\begin{equation}\label{EQ_EinsteinSigmac}
   {}^{\Sigma_c}R_{ac} = \sq{\frac{S-2R_{mm}}{m-1}+(m-2)h^2}\delta_{ac},
\end{equation}
which shows the validity of (6). Now (7) is an easy consequence of the other properties.
\end{proof}
The next two results are the analogue of \cite[Lemma 4.2]{CaoChen} and \cite[Lemma 4.3]{CaoChen}, respectively.
\begin{proposition}\label{PR_vanishingOfC}
 Let $\varrg$ be a complete noncompact $m$-dimensional ($m\geq 3$) nondegenerate Einstein-type manifold with $\alpha\neq 0$. If $D=0$ then $C=0$ at all points where $\nabla f\neq 0$.
\end{proposition}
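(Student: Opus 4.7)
The plan is to exploit the first integrability condition \eqref{firstGeneralIntCond}. With $D=0$ it becomes $\alpha\,C_{ijk} = -\beta\, f_t W_{tijk}$, and since $\alpha\neq 0$ the problem reduces to verifying $f_t W_{tijk} = 0$ at any point where $\nabla f\neq 0$. Fixing a regular value $c$ of $f$ and working in the local adapted orthonormal frame $\{e_1,\ldots,e_{m-1},e_m = \nabla f/|\nabla f|\}$, this amounts to $W_{mijk}=0$ for all $i,j,k$; by the algebraic symmetries of the Weyl tensor, the only independent components to check are $W_{mabc}$ and $W_{mamb}$ with $a,b,c\in\{1,\ldots,m-1\}$.

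I would rely on the conclusions $(1)$--$(5)$ of Theorem \ref{TH_PropertiesOnSigmac}, whose proofs do not invoke the present proposition: in particular $R_{am}=0$, $R_{ab}=\tfrac{S-R_{mm}}{m-1}\delta_{ab}$, and every regular level set $\Sigma_c$ is totally umbilical with $h_{ab}=h\,\delta_{ab}$ and mean curvature $h$ constant on $\Sigma_c$. Substituting these facts into the Kulkarni--Nomizu decomposition \eqref{Riemann_Weyl}, the Ricci and scalar curvature contributions simplify, leaving $W_{mabc}=R_{mabc}$ and $W_{mamb}=R_{mamb}-\tfrac{R_{mm}}{m-1}\delta_{ab}$. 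The task is thus reduced to showing $R_{mabc}=0$ and $R_{mamb}=\tfrac{R_{mm}}{m-1}\delta_{ab}$.

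For $R_{mabc}$ I would use the Codazzi equation $-R_{mabc}=h_{ab,c}-h_{ac,b}$ recalled in the paper: since $h_{ab}=h\,\delta_{ab}$ and $h$ is constant on $\Sigma_c$, both intrinsic covariant derivatives vanish, so $R_{mabc}=0$ and hence $W_{mabc}=0$. For $R_{mamb}$ I would invoke the Riccati equation for the shape operator of the foliation by the level sets of $f$, which in the umbilical case $h_{ab}=h\,\delta_{ab}$ forces $R_{mamb}\propto \delta_{ab}$; tracing over $a=b$ then identifies the proportionality factor as $\tfrac{R_{mm}}{m-1}$, so $W_{mamb}=0$. Assembling the two vanishings yields $f_t W_{tijk}=0$ at every point where $\nabla f\neq 0$, and hence $C=0$ there.

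The main obstacle is not computational but structural: one must correctly recognize that the relevant components of $W$ along $\nabla f$ are determined by the extrinsic geometry of the foliation $\{\Sigma_c\}$, and then extract that geometry from the preceding Theorem \ref{TH_PropertiesOnSigmac} using only those conclusions that are established independently of the present proposition, supplemented by the Codazzi and Riccati identities to handle, respectively, the tangential and the normal-radial components.
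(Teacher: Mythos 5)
Your proposal is correct, and for the hardest step it takes a genuinely different route from the paper. The first half coincides with the paper's argument: from \eqref{firstGeneralIntCond} with $D=0$ you get $\alpha C_{ijk}=-\beta f_tW_{tijk}$, and the tangential components $W_{mabc}=R_{mabc}$ are killed by the Codazzi equation together with umbilicity and the constancy of $h$ on $\Sigma_c$ (this is exactly how the paper obtains $W_{ambc}=0$ and hence $C_{abc}=0$; it also notes $C_{mjk}=0$ for free from the antisymmetry of $W$). Where you diverge is in handling the remaining components $W_{mabm}$ (equivalently $C_{abm}$): the paper does this by a fairly long computation entirely at the level of the structure equations --- it shows $R_{am,m}=0$ using $f_{am}=0$ and the constancy of $\Delta f$ on $\Sigma_c$, computes $R_{am,b}$ from $dR_{am}=0$, and then combines Schur's identity with the definition of the Cotton tensor to force $C_{abm}=0$. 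You instead observe that $W_{mamb}=R_{mamb}-\tfrac{R_{mm}}{m-1}\delta_{ab}$ and invoke the radial Riccati equation for the shape operator of the parallel foliation $\{\Sigma_c\}$, which in the umbilical case with $h=h(r)$ immediately gives $R_{mamb}\propto\delta_{ab}$, with the trace fixing the constant. This is cleaner and more geometric; the price is that you must make explicit two facts you use tacitly: that the integral curves of $e_m$ are geodesics (equivalently $f_{am}=0$, which follows from \eqref{Eq_ETS_components} and $R_{am}=0$, or from $|\nabla f|$ being constant on level sets so that $e_m=\nabla r$), and that umbilicity and the constancy of $h$ hold on a whole one-parameter family of nearby level sets --- not just on a single $\Sigma_c$ --- since the Riccati equation differentiates the shape operator in the normal direction. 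Both are available here because $D=0$ on all of $M$ and regular values form an open set, so this is a matter of exposition rather than a gap. You are also right to restrict yourself to conclusions (1)--(5) of Theorem \ref{TH_PropertiesOnSigmac}, since (6) is the one item whose proof cites the present proposition.
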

\begin{proof}
We choose a local first order frame along $f$ (so that $f_a=0$, $a=1, \ldots, m-1$ and $f_m=\abs{\nabla f}$).
The vanishing of $D$ implies, by the first integrability condition \eqref{firstGeneralIntCond}, that
\begin{equation*}
  \alpha C_{ijk}+\beta f_tW_{tijk}=0,
\end{equation*}
which implies, since $\alpha \neq 0$,
\begin{equation}\label{EQ_CottonWeylDeq0}
  C_{ijk} =-\frac{\beta}{\alpha}f_tW_{tijk}
\end{equation}
and consequently
\begin{equation}
  f_iC_{ijk}=f_mC_{mjk}=\abs{\nabla f}C_{mjk}=0, \,\, j, k =1, \ldots, m;
\end{equation}
thus
\begin{equation}\label{EQ_Cottonmjk}
  C_{mjk}=0.
\end{equation}
Using (3) and (4) of Theorem \ref{TH_PropertiesOnSigmac} we have
\begin{equation}
  h_{ab, c}=0,
\end{equation}
and from the Codazzi equations we get
\begin{equation}
  -R_{mabc}=h_{ab, c}-h_{ac, b} =0;
\end{equation}
since also $R_{am}=0$ by (2) of Theorem \ref{TH_PropertiesOnSigmac}, from the decomposition \eqref{Riemann_Weyl} we easily deduce
\begin{equation}
  W_{ambc} =0,
\end{equation}
which implies by \eqref{EQ_CottonWeylDeq0} that
\begin{equation}\label{EQ_Cottonabc}
  C_{abc}=0.
\end{equation}
By the symmetries of $C$, to conclude it only remains to show that $C_{abm}=0=C_{amb}$.
First we observe that $R_{am}=0$ implies, by the definition of covariant derivative,
\begin{align*}
  0 &= d R_{am} \\ &=R_{km}\theta^k_a+R_{ak}\theta^k_m+R_{am, k}\theta^k\\ &= R_{bm}\theta^b_a +R_{mm}\theta^m_a +R_{ab}\theta^b_m+R_{am}\theta^m_m +R_{am, k}\theta^k \\&=R_{mm}\theta^m_a +R_{ab}\theta^b_m+R_{am, k}\theta^k,
\end{align*}
so that, using \eqref{EQ_RiccRestricted},
\begin{align}\label{EQ_Ramk1}
  R_{am, k}\theta^k =R_{am, b}\theta^b+R_{am, m}\theta^m&= R_{ab}\theta^m_b-R_{mm}\theta^m_a \\ \nonumber &= \pa{\frac{S-R_{mm}}{m-1}\delta_{ab}}\theta^m_b-R_{mm}\theta^m_a \\ \nonumber &=\pa{\frac{S-mR_{mm}}{m-1}}\theta^m_a.
\end{align}
Now we want to show that $R_{am, m}=0$. To see that we first evaluate equation \eqref{Eq_ETS_components} for $i=a$ and $j=m$, obtaining $f_{am}=0$; then we take the covariant derivative of the same equation:
\begin{equation}
  \alpha R_{ij, k}+\beta f_{ijk}+\mu\pa{f_{ik}f_j+f_if_{jk}} = \pa{\rho S_k+\lambda_k}\delta_{ij},
\end{equation}
which for $i=k=m$, $j=a$ gives (using $f_{am}=0$)
\begin{equation}
  \alpha R_{am, m} = -\beta f_{mam};
\end{equation}
but
\[
f_{mam}=f_{mma}+f_iR_{imam} = f_{mma},
\]
while \eqref{Eq_ETS_global_traced} and Theorem \ref{TH_PropertiesOnSigmac} tell us that the (globally defined) quantity $\Delta f$ is constant on $\Sigma_c$, so that
\begin{equation}
  \pa{\Delta f}_a =0.
\end{equation}
On the other hand, from \eqref{Eq_ETS_components} and \eqref{EQ_RiccRestricted} we deduce
\begin{equation}\label{EQ_fabdelta}
\beta f_{ab} = -\frac{1}{m-1}\set{\sq{\alpha-\rho(m-1)}S-\alpha R_{mm}-(m-1)\lambda}\delta_{ab}-\frac{1}{m-1}f_{cc}\delta_{ab}
\end{equation}
which implies, by tracing,  that
\begin{equation}
  \beta\pa{\Delta f-f_{mm}} = \text{const.} \,\,\,\ \text{on}\,\,\Sigma_c;
\end{equation}
in particular
\begin{equation}
  f_{mam} = f_{mma} =  \pa{\Delta f}_a =0,
\end{equation}
and thus
\begin{equation}
  R_{am, m}=0.
\end{equation}
Getting back to equation \eqref{EQ_Ramk1} we now have
\begin{equation}
R_{am, b}\theta^b=\pa{\frac{S-mR_{mm}}{m-1}}\theta^m_a,
\end{equation}
and thus
\begin{align}\label{EQ_Rambfab}
  R_{am, b}&=\pa{\frac{S-mR_{mm}}{m-1}}\theta^m_a\pa{e_b} \\ \nonumber &=\frac{1}{\abs{\nabla f}}\pa{\frac{mR_{mm}-S}{m-1}}f_{ab}.
\end{align}
Schur's identity implies
\begin{equation}\label{EQ_Schurm}
  S_m = 2R_{im, i} = 2R_{am, a}+2R_{mm, m};
\end{equation}
from the definition of $C$ we have, using \eqref{EQ_RiccRestricted} and \eqref{EQ_Rambfab},
\begin{align}\label{EQ_Cabm1}
  C_{abm} &= R_{ab, m}-R_{am, b} - \frac{1}{2(m-1)}S_m\delta_{ab} \\ \nonumber &=\frac{S_m-R_{mm, m}}{m-1}\delta_{ab}+\frac{1}{\abs{\nabla f}}\pa{\frac{s-mR_{mm}}{m-1}}f_{ab}- \frac{1}{2(m-1)}S_m\delta_{ab}\\ \nonumber &=\frac{1}{2(m-1)}S_m\delta_{ab}-\frac{1}{m-1}R_{mm, m}\delta_{ab}+\frac{1}{\abs{\nabla f}}\pa{\frac{S-mR_{mm}}{m-1}}f_{ab}.
\end{align}
Using \eqref{EQ_Schurm}, \eqref{EQ_Rambfab} and \eqref{EQ_fabdelta} into \eqref{EQ_Cabm1} we arrive at
\begin{align}
  C_{abm} &= \frac{1}{m-1}R_{cm, c}\delta_{ab}+\frac{1}{\abs{\nabla f}}\pa{\frac{S-mR_{mm}}{m-1}}f_{ab}\\ \nonumber &=-\frac{1}{m-1}\frac{1}{\abs{\nabla f}}\pa{S- m R_{mm, m}}f_{ab}+\frac{1}{\abs{\nabla f}}\pa{\frac{S-mR_{mm}}{m-1}}f_{ab} \\ \nonumber &=0,
\end{align}
concluding the proof.
\end{proof}

In dimension four, we can prove the following
\begin{cor}\label{COR_vanishingOfW}
 Let $(M^{4},g)$ be a complete noncompact nondegenerate Einstein-type manifold of dimension four with $\alpha\neq 0$. If $D=0$ then $W=0$ at all points where $\nabla f\neq 0$.
\end{cor}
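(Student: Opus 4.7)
The plan is to combine Proposition~\ref{PR_vanishingOfC} with the first integrability condition \eqref{firstGeneralIntCond} and then exploit a purely algebraic feature of the four-dimensional Weyl tensor restricted to a three-dimensional subspace.

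First, fix $p\in M^{4}$ with $\nabla f(p)\neq 0$ and choose a local first-order frame along $f$, so that $e_{4}=\nabla f/|\nabla f|$ and $f_{t}=|\nabla f|\,\delta_{t4}$ at $p$. Since $D\equiv 0$, Proposition~\ref{PR_vanishingOfC} gives $C=0$ at $p$. Substituting this into the first integrability condition \eqref{firstGeneralIntCond} we obtain $\beta f_{t}W_{tijk}=0$, and nondegeneracy ($\beta\neq 0$) yields $W_{4ijk}(p)=0$ for every $i,j,k$. The standard symmetries of the Weyl tensor (skew-symmetry in each pair and pair-swap symmetry) then promote this to
\begin{equation*}
W_{ijkl}(p)=0 \qquad \text{whenever at least one of } i,j,k,l \text{ equals } 4.
\end{equation*}

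Next, consider the remaining components $W_{abcd}$ with $a,b,c,d\in\{1,2,3\}$. Because $W$ is totally trace-free on $(M^{4},g)$, for any $a,c\in\{1,2,3\}$ we compute
\begin{equation*}
0 \;=\; \sum_{j=1}^{4}W_{ajcj} \;=\; \sum_{b=1}^{3}W_{abcb}+W_{a4c4} \;=\; \sum_{b=1}^{3}W_{abcb},
\end{equation*}
by the vanishing established in the previous paragraph. Hence the restricted tensor $T_{abcd}:=W_{abcd}$, with indices ranging in $\{1,2,3\}$, is an algebraic tensor on a three-dimensional Euclidean space satisfying all the Riemann symmetries together with vanishing Ricci-type trace.

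To conclude I would invoke the classical algebraic fact that every Riemann-type tensor on a three-dimensional vector space is completely determined by its Ricci trace (equivalently, the Weyl tensor of any $3$-manifold vanishes identically); hence $T_{abcd}\equiv 0$. Combining this with the previous block, $W(p)=0$, and since $p$ was arbitrary in $\{\nabla f\neq 0\}$, we obtain $W=0$ throughout this open set. I do not foresee a genuine obstacle here: once Proposition~\ref{PR_vanishingOfC} has provided $C=0$, the argument reduces to linear algebra and the peculiarity of dimension four, where the orthogonal complement of $\nabla f$ is three-dimensional.
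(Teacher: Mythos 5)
Your proposal is correct and follows essentially the same route as the paper: Proposition \ref{PR_vanishingOfC} gives $C=0$, the first integrability condition \eqref{firstGeneralIntCond} together with $\beta\neq 0$ gives $W_{4ijk}=0$ in a first-order frame along $f$, and the remaining components $W_{abcd}$ with $a,b,c,d\in\{1,2,3\}$ vanish by the symmetries and trace-freeness of $W$. The only difference is that the paper delegates this last algebraic step to \cite[Lemma 4.3]{CaoChen}, whereas you carry it out explicitly via the standard fact that a trace-free algebraic curvature tensor on a three-dimensional space vanishes.
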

\begin{proof}
From Proposition \ref{PR_vanishingOfC}, we know that $C_{ijk}=0$. Hence, from \eqref{firstGeneralIntCond}, we deduce $f_{t}W_{tijk}=0$ fora any $i,j,k=1,\ldots,4$. For any $p\in M^{4}$ such that $\nabla f (p) \neq 0$, we choose an orthonormal frame $\set{e_1, \ldots, e_{4}}$ such that $e_4
= \frac{\nabla f}{\abs{\nabla f}}$, thus we have
$$
W_{4ijk}(p) \,=\, 0, \quad\quad \hbox{for} \quad i,j,k=1,\ldots 4 \,.
$$
It remains to show that $W_{abcd}(p)=0$ for any $a,b,c,d=1,2,3$. This follows from the symmetries and the traceless property of the Weyl tensor (for instance, see \cite[Lemma 4.3]{CaoChen}).
\end{proof}

\

\section{Proof of the main theorems and some geometric applications}
\label{sec_8}

In this last section we first prove Theorem \ref{TH_Main} and Corollary \ref{COR_Main}. Then, we give some geometric applications in the special cases of gradient Ricci solitons, $\rho$-Einstein solitons and Ricci almost solitons. We begin with

\

{\em Proof of Theorem \ref{TH_Main}}. From Theorem \ref{TH_BZDZ} we know that the tensor $D$ has to vanish on $M$.  Let $\Sigma$ be a regular level set of the function $f:M^m\to\mathds{R}$, i.e. $|\nabla f|\neq 0$ on $\Sigma$, which exists by Sard's Theorem and the fact that $f$ is nontrivial. By Theorem \ref{TH_PropertiesOnSigmac} (1) we have that $|\nabla f|$ has to be
constant on $\Sigma$. Thus, in a neighborhood $U$ of $\Sigma$ which does not contain any critical point of $f$, the potential function $f$ only depends on the signed distance $r$ to the hypersurface $\Sigma$. Hence, by a suitable change of variable, we can express the metric $g_{ij}$
as
$$
ds^2=dr^2+g_{ab}(r, \theta) d\theta^a \otimes
d\theta^{b}\ , \quad r_*<r<r^*\,,
$$
for some maximal $r_{*}\in[-\infty,0)$ and $r^{*}\in(0,\infty]$, where  $(\theta^2, \cdots, \theta^m)$ is  any local coordinates system on the level surface $\Sigma$. Moreover,  by Theorem \ref{TH_PropertiesOnSigmac} (3)-(4), we have
$$ \frac{\partial } {\partial r} g_{ab}=-2 h_{ab}= \phi (r) g_{ab}\ ,$$ where $\phi (r)=-2h(r)$. Thus, it follows easily that
$$g_{ab} (r, \theta)=e^{\Phi(r)} g_{ab} (0, \theta),$$ where
$$
\Phi(r)=\int_{0}^{r} \phi(r) \, dr.
$$
This proves that on $U$ the metric $g$ takes the form of a warped product metric:
$$
ds^2 = dr^{2} + w(r)^{2} g^{E}\,, \quad r\in(r_*, r^*) \,,
$$
where $w$ is some positive smooth function on $U$, and $ g^{E}=g^{\Sigma}$ is the metric defined on the level surface $\Sigma$, which is Einstein, by Theorem \ref{TH_PropertiesOnSigmac} (6). This concludes the proof of Theorem \ref{TH_Main}.

\

{\em Proof of Corollary \ref{COR_Main}.} The proof of Corollary \ref{COR_Main} follows from all the previous considerations combined with Corollary \ref{COR_vanishingOfW}.
\begin{flushright} $\Box$ \end{flushright}

Next we show that the properness assumption on the potential function $f$ in Theorem~\ref{TH_Main} is automatically satisfied by some classes of Einstein-type manifolds.

First of all, let $\varrg$ be a complete, noncompact, {\em gradient Ricci soliton} with potential function $f$. Then, it is well known that $f$ is always proper, provided that the soliton is either shrinking \cite[Theorem 1.1]{CaoZhou}, or steady with positive Ricci curvature and scalar curvature attaining its maximum at some point \cite[Proposition 2.3]{HDCaoChen_Steady} or expanding with nonnegative Ricci curvature \cite[Lemma 5.5]{CaoCatinoChenMantMazz}. Hence, in these cases, Theorem \ref{TH_Main} provides a local version of the classification results obtained in \cite{CaoChen} and \cite{CaoCatinoChenMantMazz}.

Secondly, if $\varrg$ is a complete, noncompact, {\em gradient shrinking $\rho$-Einstein soliton} with $\rho>0$ and bounded scalar curvature, then it follows by \cite[Lemma 3.2]{CatinoMazzMongodi_rhoEinstein2} that the potential function $f$ is proper. Hence, Theorem \ref{TH_Main} implies the following

\begin{theorem}\label{TH_appl1}
Let $\varrg$ be a complete, noncompact  gradient shrinking $\rho$-Einstein soliton of dimension $m\geq 3$ with bounded scalar curvature and $\rho>0$. If $B\pa{\nabla f, \cdot}=0$, then around any regular point of $f$ the manifold $\varrg$ is locally a warped product with $(m-1)$-dimensional Einstein fibers.
\end{theorem}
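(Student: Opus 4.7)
The plan is essentially to verify that a gradient shrinking $\rho$-Einstein soliton fits into the hypotheses of Theorem \ref{TH_Main}, so that the conclusion follows by direct invocation of that result. There is no substantive new work to do; everything is reduced to checking that the framework applies.

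First, I would recall from item (8) of the list following Definition \ref{DE_GETM} that a gradient $\rho$-Einstein soliton corresponds to the choice of parameters $(\alpha,\beta,\mu,\rho)=(1,1,0,\rho)$ with $\rho\neq 0$ and $\lambda\in\erre$ constant, so that the defining equation \eqref{Eq_ETS_global} reads
\[
\ricc+\hess(f)=(\rho S+\lambda)g.
\]
Hence the soliton is, tautologically, a gradient Einstein-type manifold. The nondegeneracy is automatic: indeed $\beta=1\neq 0$ and, since $\mu=0$,
\[
\beta^2=1\neq 0=(m-2)\alpha\mu,
\]
so the structure is nondegenerate in the sense introduced right after Definition \ref{DE_GETM}.

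Second, I would appeal to the properness of the potential function. This is precisely the content of \cite[Lemma 3.2]{CatinoMazzMongodi_rhoEinstein2}, which is quoted in the paragraph immediately preceding the statement: under the hypotheses of completeness, boundedness of the scalar curvature $S$, and $\rho>0$ (the shrinking case), the function $f$ is a proper function on $M$. Together with the assumption $B(\nabla f,\cdot)=0$, all the hypotheses of Theorem \ref{TH_Main} are thus satisfied.

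Finally, invoking Theorem \ref{TH_Main} directly, one obtains that in a neighbourhood of every regular level set of $f$ the manifold $\varrg$ is locally a warped product with $(m-1)$-dimensional Einstein fibers; in particular this description holds around any regular point of $f$, which is the desired conclusion. The only point that requires any attention at all is ensuring that the conventions of the cited properness lemma agree with the sign convention for $\rho$ used here (namely, that the cited ``shrinking'' case corresponds to $\rho>0$ in our parametrization, possibly after absorbing $\lambda$). Beyond this bookkeeping, no obstacle is present: the theorem is a direct application of the already-established local classification.
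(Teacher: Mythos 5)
Your proposal is correct and follows exactly the paper's own argument: identify the $\rho$-Einstein soliton as a nondegenerate gradient Einstein-type structure with $(\alpha,\beta,\mu,\rho)=(1,1,0,\rho)$, invoke \cite[Lemma 3.2]{CatinoMazzMongodi_rhoEinstein2} for properness of $f$, and apply Theorem \ref{TH_Main}. The explicit verification of nondegeneracy ($\beta^2=1\neq 0=(m-2)\alpha\mu$) is a worthwhile detail that the paper leaves implicit.
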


Finally, we want to show the following result concerning {\em gradient Ricci almost solitons} which are ``strongly'' shrinking.

\begin{theorem}\label{TH_appl2}
Let $\varrg$ be a complete, noncompact  gradient Ricci almost soliton of dimension $m\geq 3$ with bounded Ricci curvature and with $\lambda\geq \underline{\lambda}>0$, for some $\underline{\lambda}$. If $B\pa{\nabla f, \cdot}=0$, then around any regular point of $f$ the manifold $\varrg$ is locally a warped product with $(m-1)$-dimensional Einstein fibers.
\end{theorem}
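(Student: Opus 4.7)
The plan is to reduce the statement to Theorem \ref{TH_Main}. A gradient Ricci almost soliton $\ricc + \hess(f) = \lambda g$, with $\lambda \in \cinf$, corresponds to the choice $(\alpha, \beta, \mu, \rho) = (1, 1, 0, 0)$ in Definition \ref{DE_GETM}. Since $\beta = 1 \neq 0$ and $\beta^2 = 1 \neq 0 = (m-2)\alpha\mu$, the structure is nondegenerate, and $B(\nabla f, \cdot) = 0$ holds by hypothesis. So everything boils down to proving that the potential $f$ is proper: once this is done, Theorem \ref{TH_Main} applies, and any regular point of $f$ belongs to a regular level set around which the metric is a warped product with $(m-1)$-dimensional Einstein fibers.

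To establish properness, I would integrate the almost soliton equation twice along a minimizing unit-speed geodesic $\gamma\tc [0, L]\ra M$ from a fixed basepoint $o$. Contracting twice with $\gamma'$ gives
\[
(f \circ \gamma)''(t) \, = \, \lambda(\gamma(t)) - \ricc(\gamma'(t), \gamma'(t)),
\]
and hence, using $\lambda \geq \underline{\lambda} > 0$,
\[
(f \circ \gamma)'(L) \, \geq \, (f \circ \gamma)'(0) + \underline{\lambda}\, L - \int_0^L \ricc(\gamma'(t), \gamma'(t))\, dt.
\]
The crucial point is therefore to produce an $L$-independent upper bound on the Ricci integral along $\gamma$; given such a bound, one more integration immediately yields $f(\gamma(L)) \geq (\underline{\lambda}/2)\,L^2 - C_1 L - C_2$, forcing $f(x)\ra +\infty$ as $r(x)\ra +\infty$ and hence properness.

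The uniform bound comes from the minimality of $\gamma$ via the second variation formula. Let $\set{E_a}_{a=1}^{m-1}$ be a parallel orthonormal frame of $\gamma'^\perp$ along $\gamma$ and let $\phi\tc [0, L]\ra \erre$ be Lipschitz with $\phi(0) = \phi(L) = 0$. Summing the nonnegative index forms $I(\phi E_a)$ over $a$ gives
\[
\int_0^L \phi(t)^2\, \ricc(\gamma', \gamma')\, dt \, \leq \, (m-1)\int_0^L \phi'(t)^2\, dt.
\]
Taking $\phi$ to be the trapezoidal cutoff equal to $1$ on $[1, L-1]$ and affine on $[0,1]$ and $[L-1, L]$, and controlling the contribution of the two transition intervals through the pointwise bound $\abs{\ricc}\leq K$ (used as a lower bound to estimate the error produced by $\phi^2 < 1$ on the transitions, and as an upper bound to pass from a bound on $\int_1^{L-1}\ricc$ to one on $\int_0^L\ricc$), one obtains $\int_0^L \ricc(\gamma', \gamma')\, dt \leq C(m, K)$ uniformly in $L$. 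Properness of $f$ follows, and Theorem \ref{TH_Main} finishes the proof. The main obstacle is precisely this uniform Ricci integral estimate: it is where the full strength of the bounded Ricci hypothesis is used, and it is what makes the argument work for arbitrary $\underline{\lambda} > 0$ with no comparison assumption between $\underline{\lambda}$ and $K$.
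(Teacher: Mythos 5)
Your proposal is correct and follows essentially the same route as the paper: reduction to Theorem \ref{TH_Main} by checking nondegeneracy of the almost-soliton structure, then properness of $f$ via the second variation of arc length along minimizing geodesics with the same trapezoidal cutoff, using the bounded Ricci hypothesis to control the transition intervals and obtain a length-independent bound on $\int\ricc(\dot\gamma,\dot\gamma)$, followed by two integrations of the soliton equation to get the quadratic lower bound $f \geq \tfrac12\underline{\lambda}(r-c)^2$. No gaps; this matches the paper's argument step for step.
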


\begin{proof} By Theorem \ref{TH_Main} it is sufficient to show that under these assumptions the potential function is proper. To do this we will apply a second variation argument as in \cite[Theorem 1.1]{CaoZhou}. Let $r(x)=\hbox{dist}(x,o)$, for some fixed origin $o\in M$. We will show that, for $r(x) \gg 1$,
$$
f(x) \,\geq\, \frac{1}{2}\, \underline{\lambda} \big( r(x) - c \big) ^{2} \,,
$$
for some positive constant $c>0$ depending only on $m$ and on the geometry of $g$ on the unit ball $B_{o}(1)$. Let $\gamma(s)$, $0\leq s \leq s_{0}$ for some $s_{0}>0$, be any minimizing unit speed geodesic starting from $o=\gamma (0)$ and let $\dot{\gamma}(s)$ be the unit tangent vector of $  \gamma$. Then by the second variation of the arc length, we have
$$
\int_{0}^{s_{0}} \phi^{2}(s) \ricc(\dot{\gamma},\dot{\gamma})\,ds \leq (m-1) \int_{0}^{s_{0}}|\dot{\phi}(s)|^{2}\,ds \,,
$$
for every nonnegative function $\phi:[0,s_{0}]\rightarrow \mathbb{R}$. We choose $\phi(s)=s$ on $[0,1]$, $\phi(s)=1$ on $[1,s_{0}-1]$ and $\phi(s)=s_{0}-s$ on $[s_{0}-1, s_{0}]$. Then, since the solitons has bounded Ricci curvature, one has
$$
\int_{0}^{s_{0}} \ricc(\dot{\gamma},\dot{\gamma})\,ds \,\leq\, 2(m-1) + \max_{B_{1}(o)}|\ricc| + \max_{B_{1}(\gamma(s_{0}))}|\ricc| \,\leq\, C\,,
$$
for some positive constant $C$ independent of $s_0$. On the other hand, from the soliton equation, we have
$$
\nabla_{\dot{\gamma}} \nabla_{\dot{\gamma}} f \,=\, \lambda - \ricc(\dot{\gamma},\dot{\gamma})\,.
$$
Integrating along $\gamma$, we get
$$
\dot{f}\big(\gamma(s_{0})\big) - \dot{f}\big(\gamma(0)\big) \,=\, \int_{0}^{s_{0}} \lambda\,ds - \int_{0}^{s_{0}}\ricc(\dot{\gamma},\dot{\gamma})\,ds \geq \underline{\lambda} \, s_{0} - C \,.
$$
Integrating again, we obtain the desired estimate
$$
f\big(\gamma(s_{0})\big) \,\geq\, \frac{1}{2}  \underline{\lambda} \big( s_{0} - c \big) ^{2} \,,
$$
for some constant $c$. This concludes the proof of the theorem.
\end{proof}

\begin{rem}
  As it is clear from the above proof, in case $\underline{\lambda} =\underline{\lambda}(r)$ is such that $\frac{1}{\underline{\lambda}(r)} = o\pa{\frac{1}{r^2}}$ as $r \ra +\infty$ we have $f(r)\ra +\infty$ as $r \ra +\infty$. This suffices to prove \ref{TH_appl2}.
\end{rem}

To conclude, we note that Ricci almost solitons which are warped product were constructed in \cite[Remark 2.6]{PRRS_Almost}.

\vspace{2cm}

{\bf Acknowledgements.}
The second author would like to thank Francesca Savini for some useful remarks.

\bibliographystyle{plain}

\bibliography{bibConfRicciSol_NOV}
\end{document}